\newtheorem{theorem}{Theorem}
\newtheorem{lemma}[theorem]{Lemma}
\newcommand{\R}{\mathbb{R}}
\newcommand{\N}{\mathbb{N}}
\begin{document}

\title[On the heat content of a polygon]{On the heat content of a polygon}
\date{29 July 2016}

% Author information
\author[M. van den Berg, K. Gittins]{M. van den Berg\ *, K. Gittins}
 \thanks{*\ Partially supported by The Leverhulme Trust,
International Network Grant \emph{Laplacians, Random Walks, Bose
Gas, Quantum Spin Systems}}
%\author{M. van den Berg}
\address{School of Mathematics\\ University of Bristol\\
University Walk\\ Bristol\\ BS8 1TW, U.K.}
%\author{K. Gittins}
% AMS information
\keywords{Heat content, polygon, fractal polyhedron} \subjclass[2010]{35A99}
\subjclass[2010]{Primary: 35K05; Secondary: 35K20}
\begin{abstract}
Let $D$ be a bounded, connected, open set in Euclidean space $\R^{2}$ with polygonal
boundary. Suppose $D$ has initial temperature $1$ and the complement of $D$ has initial
temperature $0$. We obtain the asymptotic behaviour of the heat content of $D$ as time
$t \downarrow 0$. We then apply this result to compute the heat content of a particular
fractal polyhedron as $t \downarrow 0$.
\end{abstract}
\maketitle

\section{Introduction}\label{S1}
The conduction of heat or the diffusion of matter through a solid body is of importance in the physical and engineering sciences.
The classic reference of Carslaw and Jaeger, \cite{Ca45}, analyses many examples and applications. The mathematical tools used in
\cite{Ca45} are centred around separation of variables and Laplace transforms and, in many cases, require properties of special functions.
From a mathematical point of view, the heat equation, heat content and heat trace link the underlying geometry of the manifold and its
boundary and boundary conditions to the spectral resolution of the Laplace operator. Over the last few decades, a considerable amount of
progress has been made in understanding the asymptotic behaviour of the heat content for small time $t$, see \cite{G}.

It was discovered by Preunkert, \cite{P}, that even in the absence
of boundary conditions the heat content of a ball $B$ in Euclidean
space $\R^m$  which is at initial temperature $1$, while $\R^{m} - B$
has initial temperature $0$, has non-trivial asymptotic behaviour as
$t\downarrow 0$. For small $t$, the initial condition on the complement
of $B$ acts in a similar way to a Dirichlet $0$ boundary condition.
This was subsequently stated for bounded, open sets with $C^{1,1}$ boundary
in \cite{MPPP1}, and proved in \cite{MPPP07}. The
discussion is simplified by the fact that the heat kernel on
$\R^m$ is known explicitly. The general situation for the heat
content of a compact subdomain $\Omega$ in a compact Riemannian
manifold $M$ was examined in \cite{vdBG2}. The tools of
pseudo-differential calculus used there rely heavily upon the smoothness
assumptions on the boundary. Two-sided estimates for the heat
content of non-compact sets in $\R^m$ were obtained in
\cite{mvdB13}. These estimates are very different from the ones
where Dirichlet $0$ boundary conditions are imposed. See
\cite{mvdB2006} and \cite{mvdB2007}.

In this paper we denote the
fundamental solution of the heat equation on $\R^{m}$ by
\begin{equation*}%\label{e1}
p(x,y;t)=(4\pi t)^{-m/2}e^{-\vert x - y \vert^{2}/(4t)},
\end{equation*}
and for an open set $D \subset \R^{m}$, we define
\begin{equation}
u_{D}(x;t)=\int_{D} dy \, p(x,y;t). \label{e2}
\end{equation}
Then $u_{D}(x;t)$ satisfies the heat equation on $\R^{m}$
\begin{equation}\label{e3}
\Delta u_{D} = \frac{\partial u_{D}}{\partial t}, \quad x \in
\R^{m}, \, t>0,
\end{equation}
(see Chapter 2 in \cite{LCE}) and
\begin{equation}\label{e4}
\lim_{t \downarrow 0}u_D(x;t)= \mathds{1}_{D}(x), \quad x \in
\R^{m}- \partial D,
\end{equation}
where $\partial D$ is the boundary of $D$.

We define the \emph{heat content of $D$ in $\R^{m}$ at
$t$} by
\begin{equation*}%\label{e5}
H_{D}(t)=\int_{D} dx \, u_{D}(x;t).
\end{equation*}
So by \eqref{e2},
\begin{equation}\label{e5a}
H_D(t)=\int_{D} dx \int_{D}\ dy \, p(x,y;t).
\end{equation}

We denote the Lebesgue measure of a measurable set $A\subset \R^m$ by $|A|$,
its perimeter by $\mathcal{P} (A)$, and its $(m-1)$-dimensional Hausdorff measure by
$\mathcal {H}^{m-1}(A)$.

If $D$ is a bounded, open set in $\R^{m}$, $m \geq 2$, with
$C^{1,1}$ boundary $\partial D$, then Theorem 2.4 of \cite{MPPP07}
implies that
\begin{equation}\label{e6}
H_{D}(t)=\vert D \vert - \mathcal{P}(D)\frac{t^{1/2}}{\sqrt{\pi}} +
o(t^{1/2}),\ t\downarrow 0.
\end{equation} In
\cite{vdBkG14}, we obtained explicit bounds for $H_{D}(t)$ for
bounded, open sets $D$ in Euclidean space with $C^{1,1}$ boundary
which are uniform in $t$ and in the geometric data of $D$. These
bounds imply that
\begin{equation}\label{e6a}
H_{D}(t)=\vert D \vert - \mathcal{P}(D)\frac{t^{1/2}}{\sqrt{\pi}} +
O(t),\ t\downarrow 0.
\end{equation}
We observe that if $K$ is a closed set in $\R^m$ with $|K|=0$ then by the definition of the perimeter (see \cite{EG92}) and \eqref{e5a},
\begin{equation}\label{e5b}
\vert D-K\vert =\vert D\vert,\ \ \mathcal{P}(D-K)=\mathcal{P}(D),\ \   H_{D- K}(t)=H_D(t).
\end{equation}
The observations in \eqref{e5b} suggest that \eqref{e6} holds for all open sets $D$ with
finite perimeter and finite Lebesgue measure. The proof of such a statement is well beyond the scope of this paper.

In this paper, we focus on the heat content of a bounded, connected, open set $D \subset \R^{2}$
with polygonal boundary. We introduce some notation and then present the main result: Theorem~\ref{T1}.
Let $\gamma_1,\cdots, \gamma_n$ denote the interior angles of $\partial D$. Each such angle $\gamma_{j}$
is supported by two edges provided $\gamma_{j}<2\pi.$ By \eqref{e5b}, we may exclude angles
$2\pi.$ We label the corresponding vertices by $V_1,\cdots, V_n$ and note that these $n$ vertices need not
be pairwise disjoint. Let $W_{j}$ denote the infinite wedge of angle $\gamma_{j}$ with vertex $V_{j}$ such
that $W_{j} \cap D \neq \emptyset$ and the boundary of the wedge contains the two edges which are adjacent
to $V_{j}$ and have an angle $\gamma_{j}$. Let
\begin{equation}\label{e7}
\gamma = \{\gamma_{i} \, : \, (\sin \gamma_{i})^{2} \leq (\sin \gamma_{j})^{2} \, \text{ for all } j \in \{1,2,\cdots,n\}\}.
\end{equation}
For $r > 0$, we also define the open sector
\begin{equation}\label{e8}
B_{j}(r) = \{ x \in W_{j} \, : \, d(x,V_{j}) < r\}
\end{equation}
and
\begin{equation}\label{e9}
R=\frac{1}{2} \sup \left\{r \, : \, B_{\ell}(r) \cap B_{j}(r) = \emptyset \text{ for all } \ell \neq j, \,
\bigcup_{k=1}^{n} B_{k}(r) \subset D\right\}.
\end{equation}
\begin{theorem}\label{T1}
Let $D \subset \R^{2}$ be a bounded, connected, open set with polygonal boundary
$\partial D$ with $\gamma$ and $R$ as defined in \eqref{e7} to \eqref{e9}.
Then as $t \downarrow 0$,
\begin{align}
H_{D}(t) &= \vert D \vert - \mathcal{P}(D)\frac{t^{1/2}}{\sqrt{\pi}} +
\sum_{j=1}^{n} g(\gamma_j)t\label{e10}\\
&\ \ \ +\sum_{j,\ell \in \{1, \dots, n : j \neq \ell, V_{j}=V_{\ell}\}}
k(\alpha_{j},\gamma_{j},\gamma_{\ell})t + O(e^{-R^{2}(\sin \gamma)^{2}/(32t)}),\notag
\end{align}
where $g: (0,2\pi) \rightarrow \R$ is given by
\begin{equation}\label{e11a}
g(\beta) =\begin{cases} \frac{1}{\pi} + \left(1-\frac{\beta}{\pi}\right)
\cot \beta, &\beta \in (0,\pi)\cup(\pi,2\pi);\\
0, &\beta=\pi, \end{cases}
\end{equation}
and
$k: (0,\pi) \times (0,2\pi) \times (0,2\pi) \rightarrow \R$ is given by
\begin{align}
k(\alpha, \theta, \sigma)
&=\frac{1}{2\pi}\left(-(\sigma+\theta+\alpha-\pi)\cot(\sigma+\theta+\alpha)-(\alpha-\pi)\cot\alpha\right)\notag\\
&+\frac{1}{2\pi}\left((\sigma+\alpha-\pi)\cot(\sigma+\alpha)+(\theta+\alpha-\pi)\cot(\theta+\alpha)\right),\
\label{e11b}
\end{align}
for $\sigma+\theta+\alpha\ne \pi,\ \alpha\ne \pi,\ \sigma+\alpha\ne \pi,\ \theta+\alpha\ne \pi$,
where $\alpha$ denotes the smallest angle between $W_{\theta}$ and $W_{\sigma}$. In any of the remaining cases,
such as $\alpha=\pi$, we define $k(\alpha, \theta, \sigma)$ by taking appropriate limits using l'H\^{o}pital's rule.
\end{theorem}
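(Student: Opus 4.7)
My plan is to start from the rewriting
\begin{equation*}
H_D(t) = |D| - \int_D dx \int_{\R^2 \setminus D} dy\, p(x,y;t),
\end{equation*}
which reduces the problem to analyzing the "heat leakage" double integral as $t \downarrow 0$. Using the definition of $R$ in \eqref{e9}, the sectors $B_j(R) \cap W_j$ are pairwise disjoint subsets of $D$, and within each $B_j(R)$ the boundary $\partial D$ coincides with $\partial W_j$ (first assuming that the vertices are pairwise distinct as points in $\R^2$). The Gaussian decay of $p(x,y;t)$ allows me to replace $D$ locally by the infinite wedge $W_j$ inside each $B_j(R)$, and by a half-plane along each straight edge segment of $\partial D$ that lies outside all of the $B_j(R)$'s; the cross-interactions between points in different sectors are bounded above by $O(e^{-R^2 \sin^2 \gamma/(32t)})$, using the definition of $\gamma$ in \eqref{e7}.

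The main technical computation is an explicit infinite-wedge evaluation. For a wedge $W$ of angle $\beta \in (0,2\pi)$ with vertex at the origin, I would establish
\begin{equation*}
\int_{W \cap B(0,R)} dx \int_{\R^2 \setminus W} dy\, p(x,y;t) = \frac{2R\, t^{1/2}}{\sqrt{\pi}} - g(\beta)\, t + O\bigl(e^{-R^2 \sin^2\beta/(ct)}\bigr),
\end{equation*}
by introducing polar coordinates at the vertex, applying the scaling $x \mapsto \sqrt{t}\, x$, and performing the angular integrations in closed form. Two contributions separate cleanly: a "two-edges" piece of size $2R\, t^{1/2}/\sqrt{\pi}$, which reassembles with the contributions from the straight portions of $\partial D$ outside the sectors to form the full perimeter term $\mathcal{P}(D)\, t^{1/2}/\sqrt{\pi}$, and a "corner defect" $-g(\beta)\, t$, whose coefficient arises from one-dimensional integrals of $\cot\beta$-type expressions. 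The removable singularity at $\beta=\pi$, where $g(\pi)=0$, corresponds to a flat edge contributing no corner term, matching the $C^{1,1}$ case \eqref{e6a}.

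For pairs of coincident vertices $V_j = V_\ell$ with $j \neq \ell$, the local picture near the shared point is the disjoint union $W_j \cup W_\ell$ rather than a single wedge, and the leakage integral gains cross-terms of the form
\begin{equation*}
\int_{W_j \cap B(V_j,R)} dx \int_{W_\ell} dy\, p(x,y;t),
\end{equation*}
together with their symmetric counterparts. I would evaluate these by the same polar technique, now carrying three angular parameters $(\alpha,\gamma_j,\gamma_\ell)$ through the calculation, and recognize the resulting combination of cotangents as $k(\alpha,\gamma_j,\gamma_\ell)\, t$ of \eqref{e11b}. The main obstacle I anticipate is precisely this two-wedge cross-term: the presence of three independent angular parameters, and the need to verify that the degenerate limits ($\alpha=\pi$, $\sigma+\alpha=\pi$, etc.) are recovered by l'H\^{o}pital's rule on the closed-form answer, will require careful organization of the cotangent combinations and their cancellations. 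Summing all contributions and absorbing the exponentially small remainders uniformly then produces \eqref{e10}.
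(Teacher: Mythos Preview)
Your overall strategy matches the paper's: localize via Gaussian decay, model near vertices by wedges and along edges by half-planes, and treat coincident vertices via the cross-integral $\int_{W_j}\int_{W_\ell}p$. The complement rewriting $H_D(t)=|D|-\int_D\int_{D^c}p$ is a harmless variant of the paper's direct computation of $\int_D\int_D p$.

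There is, however, a concrete step that would fail as stated. Your claimed wedge identity
\[
\int_{W\cap B(0,R)}dx\int_{\R^2\setminus W}dy\,p(x,y;t)=\frac{2R\,t^{1/2}}{\sqrt{\pi}}-g(\beta)\,t+O\bigl(e^{-R^2\sin^2\beta/(ct)}\bigr)
\]
is not correct: the actual remainder is only $O(t^{3/2})$, not exponentially small. The paper's Lemma~\ref{L3.1} shows that the sector integral carries an explicit residual term
\[
-(4\pi t)^{-1/2}\int_0^{R|\sin\beta|}\!\!dx\,\Bigl(-2Rx+R^2\arcsin\tfrac{x}{R}+x\sqrt{R^2-x^2}\Bigr)e^{-x^2/(4t)},
\]
whose integrand behaves like $x^3$ near $0$, producing an $O(t^{3/2})$ contribution. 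The exponential remainder in \eqref{e10} is obtained only after this tail is cancelled exactly against the contributions from the two cuspidal regions lying between each circular sector boundary and the adjacent rectangular strips along the edges (Section~\ref{SS5}--\ref{SS6} in the paper). Your partition into ``sectors'' and ``straight portions outside the sectors'' glosses over these cusps; without tracking their half-plane contribution and matching it to the sector tail, you would be left with an $O(t^{3/2})$ remainder rather than the stated $O(e^{-R^2\sin^2\gamma/(32t)})$.
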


The terms which involve the area and perimeter are as expected and agree with those in \eqref{e6a}. We see
that the heat content has a non-trivial dependence on the interior angles
of the polygonal boundary.

We observe that $\beta\mapsto g(\beta)$ is continuous on $(0,2\pi)$, decreasing on $(0,\pi]$
%\begin{equation}\label{e12}
%\lim_{\beta \rightarrow \pi} g(\beta) = 0,
%\end{equation}
and symmetric with respect to $\pi$. That is
\begin{equation}\label{e12a}
g(\beta)=g(2\pi-\beta),\ 0<\beta<2\pi.
\end{equation}
By \eqref{e11a} and \eqref{e12a}, we conclude that $g$ is non-negative.
We remark that $k(\alpha, \theta, \sigma)$ is symmetric with respect to
$\theta$ and $\sigma$ and that $k(\alpha, \theta, \sigma)=
k(2\pi-\theta-\sigma-\alpha, \theta, \sigma)$. By Lemma \ref{L3.2},
Section~\ref{S3}, it follows that $k$ is non-negative.

In addition, if $D$ is a regular $n$-gon in $\R^{2}$, then
$\gamma_1=\gamma_2=\cdots =\gamma_n = \left(\frac{n-2}{n}\right)\pi$, and
the angular contribution to the heat content is
\begin{equation*}%\label{e13}
\frac{n}{\pi} + 2\cot\left(\left(\frac{n-2}{n}\right)\pi\right)=O\left(\frac{1}{n}\right), \, n \rightarrow \infty.
\end{equation*}
We observe that the coefficient of $\int_{\partial
\Omega} L_{aa}t$ in the expansion of the heat content of a compact
domain $\Omega$ with smooth boundary $\partial \Omega$ is also equal to
$0$, see Theorem 1.6 in \cite{vdBG2}. Here $L_{aa}$ is the trace
of the second fundamental form when $\partial {\Omega}$ is
oriented with a smooth inward-pointing unit normal vector field.

The expansion for the heat content of a polygon with Dirichlet $0$
boundary conditions was obtained in \cite{vdBS90}. There it was shown that
if $v_D$ solves the heat equation $\Delta v = \frac{\partial v}{\partial t}, \quad x \in
D, \, t>0$ with $\lim_{t \downarrow 0}v(x;t)=1, \quad x \in D$ and satisfies a Dirichlet boundary
condition $\lim_{x\rightarrow x_0}v(x;t)=0$ for any $x_0\in \partial D$ then
\begin{equation}\label{e13a}
\int_D dx \, v_D(x;t)= |D|- 2\mathcal{P}(D)\frac{t^{1/2}}{\sqrt{\pi}} +
\sum_{j=1}^{n} c(\gamma_j)t+ O(e^{-R^{2}(\sin (\gamma/2))^{2}/(32t)}),
\end{equation}
where
\begin{equation*}%\label{e13b}
c(\beta)=\int_0^{\infty}\frac{4\sinh((\pi-\beta)x)}{(\sinh(\pi x))(\cosh(\beta x))}dx.
\end{equation*}
We note that both \eqref{e10} and \eqref{e13a} have angular contributions which are additive.
However, in Theorem \ref{T1} there is an additional term in the case where vertices
have multiplicity larger than $1$. No such term is present in \eqref{e13a} since sectors based at
the same vertex do not feel each other's presence due to the Dirichlet $0$ boundary condition.

The strategy to prove \eqref{e13a} is inspired by \cite{K}, and
relies on some model computations. We use an analogous strategy
to prove \eqref{e10}. For points $x\in D$ close to a
vertex, say $x \in B_{j}(r)$ for some $j \in \{1,2,\cdots,n\}$, $r>0$,
$u_D$ is approximated by $u_{W_{j}}$. For points $x\in D$ which have a
distance at least $\delta$ to $\partial D$, for some $\delta>0$,
$u_D$ is approximated by $1$. For the remaining points in $D$,
$u_D$ is approximated by $u_{H}$, where $H$ is the half-plane
which contains $D$ and whose boundary  contains the
edge of $\partial D$ nearest to $x$.  As was the case in \cite{vdBS90},
the model computations involving the infinite wedge $W_{j}$ are the most
difficult to carry out. However, in contrast to the case with Dirichlet 0
boundary conditions, we must also consider the contribution to the heat content
from a vertex which belongs to the boundaries of more than one wedge.
We deal with these computations in Lemma \ref{L3.1} and Lemma \ref{L3.2}
in Section \ref{S3}. In Section \ref{S4} we carry out the half-plane computations.

It is quite remarkable that, in contrast to the smooth case \eqref{e6},
the asymptotic expansion in half powers of $t$ of $H_D(t)$ in Theorem \ref{T1} terminates after the term of order $t$, leaving an exponentially
small remainder as $t \downarrow 0$. This agrees with the fact that there are no further locally computable invariants of $D$ and $\partial D$ available from which
non-trivial quantities could be built. A similar phenomenon has been observed for the asymptotic expansion of the heat trace. See for example \cite{vdBS88}.
The precise form of the exponential remainder remains an open problem.
The extension of the results in this paper to general polyhedra
in $\R^3$ is another challenge beyond the scope of this paper.

However, in Section~\ref{S7}, we use Theorem~\ref{T1} to compute the heat content of a fractal polyhedron
which is constructed as follows. Let $Q_{0} \subset \R^{3}$ be an open cube of
side-length 1. Let $0<s<1$. Attach a regular open cube $Q_{1,i}$ of side-length $s$ to the
centre $c_{1,i}, i=1,\dots,6,$ of each face of $\partial Q_{0}$, and such that all the faces are pairwise-parallel.
Now proceed by induction. For $j=2,3,\dots,$ attach $N(j)=6\cdot5^{j-1}$ open cubes $Q_{j,1}, \dots, Q_{j,N(j)},$
of side-length $s^{j}$ to the centres of the boundary faces of the cubes $Q_{j-1,1},\dots,Q_{j-1,N(j-1)}$,
again with pairwise-parallel faces. We define the fractal polyhedron $D_{s}$ as
\begin{equation*}%\label{e13c}
D_{s}=\text{interior}\left\{\overline{ Q_{0} \cup \left[\bigcup_{j\geq1} \bigcup_{1\leq i \leq N(j)} Q_{j,i} \right]}\right\}
\end{equation*}
for $0<s<\sqrt{2}-1$ (see Figure~\ref{fig1}). We note that for this range of $s$, no cubes in the construction of $D_{s}$ overlap.
For the two-dimensional construction, see Theorem 4 in \cite{vdBdH99}. In that paper, the critical value $s=\sqrt{2}-1$,
where the squares just touch, is allowed. This is due to the fact that the Dirichlet $0$ boundary conditions guarantee the independence
of the heat flow in these touching squares. In this paper, we do not impose Dirichlet $0$ boundary conditions on $\partial D_{s},$ and if
the cubes touch, then this could give rise to an extra term. For this reason, we only allow $0<s<\sqrt{2}-1$.

\begin{figure}[h]
\centering\includegraphics[scale=.9]{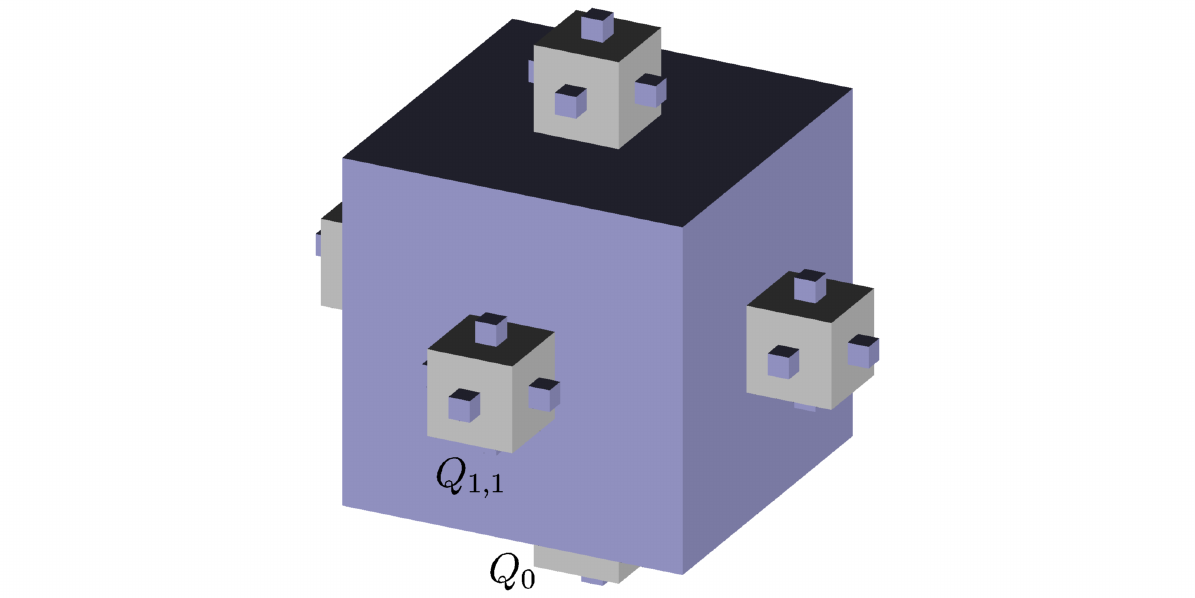}
\caption{The first two generations of $D_{s}$ with $s=\frac{1}{4}$.}\label{fig1}
\end{figure}

We have that
\begin{equation*}%\label{e13d}
\vert D_{s} \vert = \frac{1+s^{3}}{1-5s^{3}}, \
\end{equation*}
and that the two-dimensional Hausdorff measure of the boundary is given by
\begin{equation*}
\mathcal{H}^2(\partial D_{s})  = 6\left(\frac{1-s^{2}}{1-5s^{2}}\right).
\end{equation*}
Both quantities are finite for $0<s<\sqrt{2}-1$. Moreover, the total length of the edges of $\partial D_{s}$ is finite if and only if $0<s<\frac15$ and equals
$12\left(\frac{1+s}{1-5s}\right)$.
%\begin{equation}\label{e13e}
%\text{Total length of the edges of $\partial D_{S}$}=
%\begin{cases}
%12\left(\frac{1+s}{1-5s}\right) & \text{if $0<s<\frac{1}{5}$,}\\
%\infty & \text{if $\frac{1}{5} \leq s<\sqrt{2}-1$.}
%\end{cases}
%\end{equation}
In addition to Theorem~\ref{T1}, in order to compute the heat content of $D_{s}$,
we require the heat content of a sector of angle $\pi$ in an infinite wedge of angle $\frac{3\pi}{2}$
where this sector and wedge share one common edge and vertex. This will be computed in Section~\ref{S6}.
In Section~\ref{S7}, we prove the following theorems which give the asymptotic expansion for
the heat content of $D_{s}$ as $t \downarrow 0$.
\begin{theorem}\label{T2}
Let $d=\frac{3}{2} + \frac{1}{2}\frac{\log 5}{\log s}$. Fix $0<s<\sqrt{2}-1$, $s \neq \frac{1}{5}$.
There exists a periodic, continuous function $p_{s}: \R \rightarrow \R$ with period $\log (s^{-2})$ such that
\begin{align}
H_{D_{s}}(t) &= \frac{1+s^{3}}{1-5s^{3}} - 6\left(\frac{1-s^{2}}{1-5s^{2}}\right)\frac{t^{1/2}}{\sqrt{\pi}}
+\frac{12}{\pi}\left(\frac{1+s}{1-5s}\right)t \label{e13f}\\
&\ \ \ + 6p_{s}(\log t)t^{d} + O(t^{3/2}\left(\log(t^{-1})\right)^{3/2}),\ t \downarrow 0.\notag
\end{align}
\end{theorem}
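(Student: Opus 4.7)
The strategy is to reduce the three-dimensional heat content of $D_s$ to the two-dimensional analyses from Theorem \ref{T1} and Section \ref{S6} by analysing $\partial D_s$ through its stratification into planar faces, straight edges, and vertices, and then to exploit the self-similarity of $D_s$ to sum or resum the contributions across generations. The reduction hinges on three observations. First, near a planar piece of $\partial D_s$, $u_{D_s}$ is locally the half-space heat kernel integral. Second, near an edge of $\partial D_s$ with dihedral angle $\gamma$, $D_s$ is locally isometric to $W_\gamma\times\R$ and by separation of variables the heat content contribution per unit edge length is $g(\gamma)\,t$, with $g$ as in Theorem \ref{T1}. Third, only the dihedral angles $\pi/2$ (convex cube edges) and $3\pi/2$ (at the boundary of a face shared between a parent cube and an attached child cube) occur in $\partial D_s$, and $g(\pi/2)=g(3\pi/2)=1/\pi$.

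Within this framework, I would first extract the terms of order $1$, $t^{1/2}$, and $t$. Stratifying $D_s$ by distance to $\partial D_s$, the bulk contributes $\vert D_s\vert=(1+s^3)/(1-5s^3)$ with an exponentially small error, and the tubular neighbourhoods of planar faces give the surface term $-6(1-s^2)/(1-5s^2)\cdot t^{1/2}/\sqrt{\pi}$ via the standard half-space computation. For the coefficient of $t$, the second and third observations suggest the heuristic $\sum_e L(e)\,g(\gamma(e))\,t$, which for $0<s<1/5$ converges absolutely and sums to $(12/\pi)(1+s)/(1-5s)\,t$ via the geometric series. For $1/5<s<\sqrt{2}-1$ the total edge length is infinite, and this expression must be interpreted as a resummed quantity obtained from the self-similar decomposition of $D_s$: gathering the $t$-contributions generation by generation and using that $N(j) s^j$ scales like $(5s)^j$, the value $12(1+s)/(1-5s)$ emerges through telescoping cancellations rather than absolute convergence.

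The vertex analysis produces the fractal $t^d$ term. Each cube $Q_{j,i}$ with $j\ge 1$ has four ``attachment'' vertices at the corners of its face shared with its parent and four ``convex'' vertices at the corners of the opposite face. At a convex vertex the local model is a right-angled orthant; at an attachment vertex the local model, in suitable two-dimensional cross-sections, is the half-plane sector of angle $\pi$ sitting inside the wedge of angle $3\pi/2$, which is precisely the configuration handled in Section \ref{S6}. An isolated vertex at level $j$ contributes a fixed multiple of $t^{3/2}$ as long as $s^j$ exceeds a constant multiple of $\sqrt{t}$. Introducing the cutoff $J(t)$ defined by $s^{J(t)}\sim\sqrt{t}$, the vertex contributions from levels $j\le J(t)$ sum to a multiple of
\begin{equation*}
t^{3/2}\sum_{j=0}^{J(t)} N(j)\;\sim\;t^{3/2}\cdot 5^{J(t)}\;=\;t^{3/2+(\log 5)/(2\log s)}\,\Phi(\log t),
\end{equation*}
where $\Phi$ is a log-periodic factor of period $\log(s^{-2})$, arising because $J(t)$ jumps by one each time $\sqrt{t}$ crosses a value of the form $c\cdot s^j$. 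Packaging this oscillation as $6\,p_s(\log t)\,t^d$ produces the claimed fractal term.

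What remains is the error analysis, which is also the main obstacle. For levels $j>J(t)$ the cubes lie below the diffusion scale and the isolated-vertex picture fails; one instead bounds the relevant heat content defect of each generation-$j$ sub-fractal by a multiple of $(s^j)^3 N(j)+t^{3/2}N(j)$, and summing over $j>J(t)$ produces $O(t^{3/2}(\log t^{-1})^{3/2})$, the logarithmic factor arising from the $O(\log t^{-1})$ generations that straddle the diffusion scale. The hardest step will be to verify that the oscillatory sums assembled from isolated vertices at levels $j\le J(t)$ combine into a \emph{continuous} periodic function $p_s$: the discontinuities of the truncated sum at the critical scales $\sqrt{t}=c\cdot s^j$ must be exactly cancelled by the next-order corrections in the isolated-vertex expansion. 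Additionally, when $s>1/5$, one must check that no spurious $t$-terms leak into the $t^d$ term from the non-absolutely-convergent edge sum, so that the announced decomposition in \eqref{e13f} remains valid throughout the full range $0<s<\sqrt{2}-1$, $s\ne 1/5$.
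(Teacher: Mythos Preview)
Your approach differs fundamentally from the paper's. You attempt a direct summation over generations, cutting off at the scale $J(t)$ where cubes drop below the diffusion length $\sqrt t$, and then reassembling the isolated-vertex contributions into a log-periodic function. The paper instead derives a \emph{renewal equation}: after localising to one of the six branches $H_s\setminus H$ attached to a half-space $H$, it shows that the branch heat content $E(t)=\int_{H_s\setminus H}u_{H_s}(x;t)\,dx$ satisfies
\[
E(t)=5s^{3}\,E\!\left(\frac{t}{s^{2}}\right)+s^{3}-5s^{2}(1-s^{2})\frac{t^{1/2}}{\sqrt{\pi}}+\frac{12s}{\pi}\,t+h(t),
\qquad |h(t)|\le \tilde C\,t^{3/2}(\log t^{-1})^{3/2},
\]
with the vertex contributions of a \emph{single} generation absorbed into $h(t)$ and never computed explicitly. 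The equation is solved by an Ansatz: subtracting the obvious volume, surface, and edge terms from $E(t)$ leaves $q_s(t)t^{d}$ with $q_s(t)=q_s(t/s^{2})+h(t)t^{-d}$, and one then sets $p_s(\log t)=q_s(t)+\sum_{j\ge 1}h(ts^{2j})(ts^{2j})^{-d}$, which is periodic by construction and continuous because $t\mapsto H_{D_s}(t)$ is.

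The renewal method sidesteps precisely the obstacles you flag. The $t$-coefficient $12s/(\pi(1-5s))$ in $E(t)$ drops out algebraically from the Ansatz, so no separate resummation of the divergent edge series is needed when $s>1/5$; the series defining $p_s$ converges absolutely because $s^{3-2d}=1/5$ for every $s$, irrespective of the sign of $1-5s$; and the re-entrant vertex coefficient of $t^{3/2}$, which the paper explicitly says it cannot compute, never has to be found since it lives inside $h$. Your direct-summation route is conceptually reasonable, but making the cancellations you describe precise---both the continuity of $p_s$ across the jumps of $J(t)$ and the non-leakage of $t$-terms into $t^{d}$ for $s>1/5$---would in effect amount to rediscovering and iterating the renewal relation.
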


It is easy to see that if we write $d=\frac{m-d_s}{2},\ m=3$, in Theorem \ref{T2} then
$d_s=\frac{\log 5}{\log (s^{-1})}$ is the interior Minkowski dimension of the vertices of $\partial D_{s}$
for $0<s<\frac15$, whereas for $\frac15<s<\sqrt 2-1$, it is the interior Minkowski dimension of the edges of
$\partial D_{s}$. Below we state the corresponding result for the critical case $s=\frac15$.

\begin{theorem}\label{T3}
For $s=\frac{1}{5}$, there exists a periodic, continuous function $p_{\frac{1}{5}}: \R \rightarrow \R$ with period $\log 25$
such that,
\begin{align}
H_{D_{\frac{1}{5}}}(t)&= \frac{21}{20} - \frac{36}{5}\frac{t^{1/2}}{\sqrt{\pi}} + \frac{132}{5\pi}t
-\frac{36}{5\pi\log 5} t \log t \label{e13g}\\
&\ \ \ + 6p_{\frac{1}{5}}(\log t)t + O(t^{3/2}\left(\log(t^{-1})\right)^{3/2}),\ t \downarrow 0.\notag
\end{align}
\end{theorem}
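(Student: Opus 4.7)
The plan is to exploit the exact self-similarity of $D_{1/5}$ by decomposing its heat content generation by generation of the construction and then performing a Mellin-transform analysis analogous to that of \cite{vdBdH99} for the two-dimensional Dirichlet case. First I would establish a three-dimensional analogue of Theorem~\ref{T1} applicable to each of the cubes in the construction. The edge terms use $g(\pi/2)=g(3\pi/2)=1/\pi$ (both dihedral angles appear at the attachment of a sub-cube to its parent face, and $g$ is symmetric about $\pi$); the corner terms cover standard cube corners as well as the mixed vertices where a sub-cube meets the face of its parent, the latter being controlled by the sector-in-wedge quantity computed in Section~\ref{S6}. As in the proof of Theorem~\ref{T1}, $u_{D_{1/5}}$ is approximated by $1$ in the bulk, by $u_H$ near a face, by a two-dimensional wedge model times $\R$ near a dihedral edge, and by the corresponding three-dimensional corner model near a vertex, with Gaussian errors controlled by a scale-dependent analogue of $R$.

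Next I would organise the resulting bulk, surface, edge and corner sums level by level, introducing a truncation generation $J(t)\sim\log(t^{-1})/(2\log 5)$ beyond which the cube side-length $5^{-j}$ drops below $t^{1/2}$ and the cube-wise Theorem~\ref{T1}-type expansion ceases to be dominant. The bulk and surface geometric series $\sum_j N(j)s^{3j}$ and $\sum_j N(j)s^{2j}$ converge at $s=1/5$ and yield the closed-form constants $\tfrac{21}{20}$ and $\tfrac{36}{5}$ in \eqref{e13g}. The edge series $\sum_j N(j)s^j$ is the resonant one: every generation contributes the same total edge length $\tfrac{72}{5}$, so truncation at $J(t)$ produces a total visible edge equal to $12+\tfrac{72}{5}J(t)+O(1)$, and multiplying by the edge coefficient $1/\pi$ extracts the $-\tfrac{36}{5\pi\log 5}\,t\log t$ term. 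The remaining order-$t$ contributions $\tfrac{132}{5\pi}\,t$ and $6p_{1/5}(\log t)\,t$ are then fixed by a Mellin analysis of the corner contributions.

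The continuous periodic function $p_{1/5}$ arises from the corner contributions, of which there are $\sim 5^j$ at generation $j$, each carrying a size of order $t^{3/2}$ for $t\ll s^{2j}$ and becoming exponentially small for larger $t$. The self-similarity of $D_{1/5}$ translates into a renewal-type identity for the corner sum $C(t)$, the Mellin transform of which has poles at $z=1+2\pi i k/\log 25$, $k\in\Z$. Inverse Mellin transformation converts the $k\ne 0$ poles into a contribution of the form $t\cdot P(\log t)$ with $P$ of period $\log 25$, absolute convergence of the associated Fourier series (a consequence of the exponential decay of the non-polar part of the Mellin transform along vertical lines) automatically ensuring its continuity. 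The $k=0$ pole, together with the edge resonance discussed above, fixes the constant $\tfrac{132}{5\pi}$.

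The main obstacle is the clean separation of the two resonances at order $t$: the simple pole of the edge geometric series at $s=1/5$ (coming from the vanishing of $1-5s$) and the real pole of the corner Mellin transform at $z=1$ act simultaneously, and distinguishing the $t\log t$, constant $t$, and genuinely periodic $p_{1/5}(\log t)\,t$ pieces of $H_{D_{1/5}}(t)$ demands careful bookkeeping of residues and of the constants absorbed from the finitely many generations $j\le J(t)$. The remainder $O(t^{3/2}(\log t^{-1})^{3/2})$ is dictated by the $t^{3/2}$ corner order together with a $(\log t^{-1})^{3/2}$ factor arising from summing Gaussian tail estimates over the $\sim\log(t^{-1})$ relevant generations at the truncation scale.
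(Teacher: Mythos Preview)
Your approach is plausible but differs substantially from the paper's, which never touches Mellin transforms. Following \cite{vdBdH99}, the paper introduces the half-space model $H_s$ (one branch of $D_s$ glued to a half-space), sets $E(t)=\int_{H_s\setminus H}u_{H_s}(\cdot\,;t)$, and proves the single renewal identity $E(t)=5s^3E(t/s^2)+s^3-5s^2(1-s^2)\pi^{-1/2}t^{1/2}+12s\pi^{-1}t+h(t)$ with $|h(t)|\le\tilde C\,t^{3/2}(\log t^{-1})^{3/2}$ (Lemma~\ref{L7.3}). At $s=\tfrac15$ one makes the Ansatz $E(t)=\tfrac{1}{120}-\tfrac{6}{25}\pi^{-1/2}t^{1/2}-\tfrac{6}{5\pi\log5}\,t\log t+\tfrac{12}{5\pi}\,t+q_{1/5}(t)\,t$; substituting the renewal collapses this to $q_{1/5}(t)=q_{1/5}(25t)+h(t)/t$, and subtracting the convergent tail $\sum_{j\ge1}h(t\,25^{-j})(t\,25^{-j})^{-1}$ defines the periodic $p_{1/5}$ directly (Lemma~\ref{L7.5}). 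Formula~\eqref{e13g} is then Lemma~\ref{L7.1} plus $6E(t)$, and continuity of $p_{1/5}$ is inherited from continuity of $t\mapsto H_{D_s}(t)$. This route is more elementary than yours---no pole location, no vertical-line decay of a transform, and the ``two resonances at order $t$'' you flag are dissolved by one well-chosen Ansatz rather than residue bookkeeping---whereas your Mellin scheme, if executed, would in exchange yield the Fourier coefficients of $p_{1/5}$ explicitly. Two points where your sketch drifts from what actually happens: the paper does \emph{not} establish a three-dimensional analogue of Theorem~\ref{T1} (it states that the re-entrant vertex coefficient is unknown and merely bounds each vertex neighbourhood by its volume), so your first step should be read as a structural bound rather than an expansion; and the factor $(\log t^{-1})^{3/2}$ in the remainder comes from the volume $\delta^3$ of a single vertex cube with $\delta=8t^{1/2}(\log t^{-1})^{1/2}$, not from summing Gaussian tails over $\sim\log t^{-1}$ generations.
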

In Section \ref{S2} below we introduce some further notation and
state and prove several lemmas.

\section{Additional notation and lemmas.}\label{S2}
Let $D$ be as given in Theorem~\ref{T1}.
In the proofs of Lemma \ref{L1} to Lemma \ref{L4} we use a variant of Kac's principle of not feeling the
boundary to reduce the computation of the heat content of $D$ to a collection of model computations.
For $r>0$, $\delta >0$, $x \in D$, $A \subset \R^{2}$ we define
\begin{equation}\label{e14a}
d(x,A) = \inf \{\vert x - z \vert : z \in A\},
\end{equation}
and
\begin{equation*}%\label{e14b}
C(\delta,r) = \left\{x \in D \, : \, d(x, \partial D)<\delta, \, x \not\in \bigcup_{k=1}^{n} B_{k}(r) \right\},
\end{equation*}
and
\begin{equation*}%\label{e15}
D(\delta,r) = \left\{x \in D \, : \, x \not\in \bigcup_{k=1}^{n} B_{k}(r), \, x \not\in C(\delta,r)\right\}.
\end{equation*}
Choose $R$ as in \eqref{e9} and let $\delta = \frac{R}{2} \vert \sin \gamma \vert$.
\begin{lemma}\label{L1}
\begin{equation*}%\label{e16}
\int_{D(\delta,R)}dx \int_{D} dy \, p(x,y;t) = \vert D(\delta, R) \vert + O(e^{-R^{2}(\sin \gamma)^{2}/(32t)}), \,
t \downarrow 0.
\end{equation*}
\end{lemma}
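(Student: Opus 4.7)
The plan is to use Kac's principle of not feeling the boundary in its most elementary form: for points $x$ that are well in the interior of $D$, the mass of the Gaussian kernel $p(x,\cdot;t)$ escaping from $D$ is exponentially small in $1/t$. The combinatorial conditions defining $D(\delta,R)$ are tailored so that exactly this happens uniformly for $x \in D(\delta,R)$.

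First, I would use the identity $\int_{\R^{2}}p(x,y;t)\,dy=1$ to rewrite the left-hand side as
\begin{equation*}
\int_{D(\delta,R)}dx\int_{D} p(x,y;t)\,dy = |D(\delta,R)| - \int_{D(\delta,R)}dx\int_{\R^{2}\setminus D} p(x,y;t)\,dy,
\end{equation*}
so it suffices to show that the final double integral is $O(e^{-R^{2}(\sin\gamma)^{2}/(32t)})$.

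Next, I would unpack the definition of $D(\delta,R)$. If $x\in D(\delta,R)$, then $x\notin C(\delta,R)$ and $x\notin \bigcup_{k}B_{k}(R)$, which together force $d(x,\partial D)\ge \delta$. Therefore, for any $y\in \R^{2}\setminus D$ we have $|x-y|\ge \delta$, and a standard Gaussian tail estimate in polar coordinates gives
\begin{equation*}
\int_{\R^{2}\setminus D} p(x,y;t)\,dy \;\le\; \int_{|z|\ge \delta}(4\pi t)^{-1}e^{-|z|^{2}/(4t)}\,dz \;=\; e^{-\delta^{2}/(4t)}.
\end{equation*}
Integrating over $x\in D(\delta,R)$ and using the trivial bound $|D(\delta,R)|\le |D|$ yields
\begin{equation*}
\int_{D(\delta,R)}dx\int_{\R^{2}\setminus D} p(x,y;t)\,dy \;\le\; |D|\,e^{-\delta^{2}/(4t)}.
\end{equation*}

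Finally, I would substitute $\delta=\frac{R}{2}|\sin\gamma|$, so that $\delta^{2}/(4t)=R^{2}(\sin\gamma)^{2}/(16t)\ge R^{2}(\sin\gamma)^{2}/(32t)$, and absorb the constant $|D|$ into the $O(\cdot)$ notation. There is no real obstacle here; the only point that requires any care is verifying that the definitions of $C(\delta,R)$ and $D(\delta,R)$ truly force $d(x,\partial D)\ge \delta$ for every $x\in D(\delta,R)$ (they do, by the argument above), and that the factor $32$ rather than $16$ in the exponent is simply a convenient weakening that matches the remainder in Theorem~\ref{T1}.
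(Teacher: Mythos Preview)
Your proof is correct and follows essentially the same approach as the paper: both arguments hinge on the observation that $x\in D(\delta,R)$ forces $d(x,\partial D)\ge\delta$, followed by a Gaussian tail bound. The paper cites \cite[Proposition~9(i)]{mvdB13} for the inequality $1-2e^{-\delta(x)^{2}/(8t)}\le\int_{D}p(x,y;t)\,dy\le 1$, whereas you compute the polar-coordinate tail integral directly and obtain the slightly sharper exponent $-\delta^{2}/(4t)$; either way the desired $O(e^{-R^{2}(\sin\gamma)^{2}/(32t)})$ follows immediately.
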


\begin{proof}
By \cite[Proposition 9(i)]{mvdB13}, we have that
\begin{equation*}%\label{e17}
1-2e^{-\delta(x)^{2}/(8t)} \leq \int_{D} dy \, p(x,y;t) \leq 1,
\end{equation*}
where $\delta(x) = \min \{ \vert x-y \vert \, : \, y \in \R^{2}-D\}$.
Since $x \in D(\delta, R)$, $\delta(x) \geq \delta$.
Hence
\begin{equation*}%\label{e18}
\vert D(\delta,R)\vert - 2 \vert D(\delta,R)\vert e^{-\delta^{2}/(8t)}
\leq \int_{D(\delta,R)}dx \int_{D} dy \, p(x,y;t) \leq \vert D(\delta, R)\vert
\end{equation*}
as required.
\end{proof}

We partition the region $D - D(\frac{R}{2} \vert \sin \gamma \vert, R)$ into $n$ sectors, $B_{i}(R)$,
of radius $R$, $n$ rectangles, $S_{\gamma}$, of height $\frac{R}{2} \vert \sin \gamma \vert$
and $2n$ cusps of height $\frac{R}{2} \vert \sin \gamma \vert$. The contributions to $H_{D}(t)$
from these regions will be computed in Sections~\ref{S3}, \ref{SS4} and \ref{SS5} respectively.
Each sector has two neighbouring cusps. Each cusp is adjacent to a rectangle and a sector.
The corresponding $m$-dimensional result to \cite[Lemma 7]{vdBdH99} is the following.

\begin{lemma}\label{L}
Let $\tilde{D},F,G$ be non-empty, open subsets of $\R^{m}, m \geq 2$ such that $\tilde{D} \cap F \neq \emptyset$ and
$G \subset \tilde{D} \cap F$. Let $E$ be a bounded, measurable subset of $G$. Then
\begin{equation*}%\label{e18a}
\int_{E}dx \int_{\tilde{D}} dy \, p(x,y;t) = \int_{E}dx \int_{F} dy \, p(x,y;t) + O(e^{-\epsilon^{2}/(8t)}),
\, t \downarrow 0,
\end{equation*}
where
\begin{equation*}%\label{e18b}
\epsilon=\inf \{\vert x-y \vert : x \in E, y \in \overline{(\tilde{D} \cup F) \cap \partial G}\}.
\end{equation*}
\end{lemma}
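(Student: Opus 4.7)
The plan is to express the difference of the two integrals as an integral over the symmetric difference $\tilde{D}\triangle F$, to show geometrically that every point of that symmetric difference lies a distance at least $\epsilon$ from every $x\in E$, and to conclude by the off-diagonal Gaussian decay of the heat kernel. First, by the identity $\int_{\tilde{D}}p\,dy-\int_{F}p\,dy=\int_{\tilde{D}\setminus F}p\,dy-\int_{F\setminus\tilde{D}}p\,dy$,
\begin{equation*}
\left\vert \int_{E}dx\int_{\tilde{D}}dy\,p(x,y;t)-\int_{E}dx\int_{F}dy\,p(x,y;t)\right\vert \leq \int_{E}dx\int_{\tilde{D}\triangle F}dy\,p(x,y;t),
\end{equation*}
and since $G\subset\tilde{D}\cap F$, every $y\in\tilde{D}\triangle F$ lies outside $G$.

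Next, I parameterise the straight segment from $x\in E\subset G$ to $y\in\tilde{D}\triangle F$ by $f(s)=x+s(y-x)$, $s\in[0,1]$. Openness of $G$ together with $f(0)\in G$ and $f(1)\notin G$ produces a first exit time $s^{\ast}\in(0,1]$ with $f(s^{\ast})\in\partial G$; for $s<s^{\ast}$ the point $f(s)$ lies in $G\subset\tilde{D}\cup F$, so $f(s^{\ast})\in\overline{(\tilde{D}\cup F)\cap\partial G}$ and the definition of $\epsilon$ yields $\vert x-y\vert\geq\vert x-f(s^{\ast})\vert\geq\epsilon$. Splitting the Gaussian exponent in half,
\begin{equation*}
p(x,y;t) \leq (4\pi t)^{-m/2}e^{-\epsilon^{2}/(8t)}\,e^{-\vert x-y\vert^{2}/(8t)},
\end{equation*}
and integrating the last factor over $y\in\R^{m}$ gives a bound $2^{m/2}e^{-\epsilon^{2}/(8t)}$ on the $y$-integral; multiplying by the finite measure $\vert E\vert$ yields the claimed $O(e^{-\epsilon^{2}/(8t)})$ remainder.

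The delicate point is the inclusion $f(s^{\ast})\in\overline{(\tilde{D}\cup F)\cap\partial G}$: one immediately has $f(s^{\ast})\in\partial G\cap\overline{\tilde{D}\cup F}$, but the definition of $\epsilon$ intersects $\partial G$ with $\tilde{D}\cup F$ before taking the closure. When $f(s^{\ast})\in\tilde{D}\cup F$ the inclusion is immediate, and in the remaining boundary case one must produce nearby points of $(\tilde{D}\cup F)\cap\partial G$. In the applications in Sections~\ref{S3}--\ref{SS5} the auxiliary sets $\tilde{D}$ and $F$ are wedges, rectangles, or half-planes containing a full neighbourhood of the relevant portion of $\partial G$, so the exit point $f(s^{\ast})$ lies directly in $(\tilde{D}\cup F)\cap\partial G$ and this subtlety does not arise.
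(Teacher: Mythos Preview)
Your approach is essentially the same as the paper's: decompose the difference into the two pieces $\tilde{D}\cap F^{c}$ and $\tilde{D}^{c}\cap F$ (your symmetric difference), then bound each by $2^{m/2}\vert E\vert e^{-\epsilon^{2}/(8t)}$ via the Gaussian split. The paper simply asserts this bound without justifying why $\vert x-y\vert\ge\epsilon$ for $x\in E$ and $y\in\tilde{D}\triangle F$; your segment/exit-point argument supplies that missing step and, in flagging the closure subtlety in the definition of $\epsilon$, you are in fact being more careful than the paper, whose proof leaves this implicit.
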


\begin{proof}
We write
\begin{align}
&\int_{E}dx \int_{\tilde{D}} dy \, p(x,y;t) \notag\\
&=\int_{E}dx \int_{\tilde{D} \cap F} dy \, p(x,y;t) + \int_{E}dx \int_{\tilde{D} \cap F^{c}} dy \, p(x,y;t) \notag\\
&=\int_{E}dx \int_{F} dy \, p(x,y;t)-\int_{E}dx \int_{\tilde{D}^{c} \cap F} dy \, p(x,y;t)
+ \int_{E}dx \int_{\tilde{D} \cap F^{c}} dy \, p(x,y;t).\label{e18c}
\end{align}
By \eqref{e18c}, we have that
\begin{align*}
\int_{E}dx \int_{\tilde{D}} dy \, p(x,y;t)&\geq \int_{E}dx \int_{F} dy \, p(x,y;t)-\int_{E}dx \int_{\tilde{D}^{c} \cap F} dy \, p(x,y;t) \notag\\
&\geq \int_{E}dx \int_{F} dy \, p(x,y;t) -2^{m/2}\vert E \vert e^{-\epsilon^{2}/(8t)}, %\label{e18d}
\end{align*}
and
\begin{align*}
\int_{E}dx \int_{\tilde{D}} dy \, p(x,y;t)&\leq \int_{E}dx \int_{F} dy \, p(x,y;t)+\int_{E}dx \int_{\tilde{D} \cap F^{c}} dy \, p(x,y;t) \notag\\
&\leq \int_{E}dx \int_{F} dy \, p(x,y;t) +2^{m/2}\vert E \vert e^{-\epsilon^{2}/(8t)}. %\label{e18e}
\end{align*}
This completes the proof.
\end{proof}

\begin{lemma}\label{L2}
Let $i \in \{1,\cdots,n\}$ and $k \in \N$ such that $i+k\leq n$.
Suppose $\gamma_{i}, \gamma_{i+1}, \cdots, \gamma_{i+k}$ are interior angles of $\partial D$
which are supported by edges which meet at the same vertex $V_{i}=V_{i+1}=\cdots=V_{i+k}$. Then
\begin{align}
&\int_{\cup_{j=i}^{i+k} B_{j}(R)}dx \int_{D} dy \, p(x,y;t) \label{e19}\\
&= \sum_{j=i}^{i+k}\int_{B_{j}(R)}dx \int_{W_{j}} dy \, p(x,y;t) +\sum_{j \neq \ell, j,\ell=i}^{i+k} \int_{W_{j}}dx \int_{W_{\ell}}dy \, p(x,y;t)
+ O(e^{-R^{2}(\sin \gamma)^{2}/(32t)}), \, t \downarrow 0.\notag
\end{align}
\end{lemma}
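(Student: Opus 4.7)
The plan is to apply Lemma \ref{L} in the spirit of Kac's principle of not feeling the boundary: first replace the set $D$ by the union of wedges $\bigcup_{\ell=i}^{i+k}W_\ell$ in the $y$-integration, and then extend the $x$-integration from the sector $B_j(R)$ to the full wedge $W_j$ in the off-diagonal contributions.

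By the definition of $R$ in \eqref{e9}, the sectors $\{B_j(R)\}_{j=i}^{i+k}$ are pairwise disjoint, so the left-hand side of \eqref{e19} equals $\sum_{j=i}^{i+k}\int_{B_j(R)}dx\int_D dy\,p(x,y;t)$. For each term, I would apply Lemma \ref{L} with
\[
E=B_j(R),\quad \tilde D=D,\quad F=\bigcup_{\ell=i}^{i+k}W_\ell,\quad G=\bigcup_{\ell=i}^{i+k}B_\ell(R+\delta),
\]
where $\delta=R|\sin\gamma|/2$. Since $\delta<R$, the inclusion $G\subset\bigcup_\ell B_\ell(2R)\subset D$ follows from the definition of $R$, and $G\subset F$ is immediate, giving $E\subset G\subset\tilde D\cap F$. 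The boundary $\partial G$ has two types of pieces: radial segments lying on the polygon edges at $V_i$, which are excluded from the open sets $D$ and $\bigcup_\ell W_\ell$; and circular arcs at distance $R+\delta$ from $V_i$, which lie in $\tilde D\cap F$. Hence $\overline{(\tilde D\cup F)\cap\partial G}$ reduces to these closed arcs, whose distance to $E$ is at least $\delta$ by the triangle inequality. Lemma \ref{L} then yields
\[
\int_{B_j(R)}dx\int_D dy\,p(x,y;t) = \int_{B_j(R)}dx\int_{\cup_\ell W_\ell}dy\,p(x,y;t) + O\!\left(e^{-R^2(\sin\gamma)^2/(32t)}\right),
\]
matching the announced exponent.

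Next, since the wedges $W_\ell$ at the common vertex are pairwise disjoint up to measure-zero boundary rays, $\int_{\cup_\ell W_\ell}dy = \sum_\ell\int_{W_\ell}dy$. Isolating the diagonal term $\ell=j$ and summing over $j$ produces the first sum in \eqref{e19}. For each off-diagonal pair $(j,\ell)$ with $j\neq\ell$, I extend the $x$-integration to the full wedge:
\[
\int_{B_j(R)}dx\int_{W_\ell}dy\,p = \int_{W_j}dx\int_{W_\ell}dy\,p - \int_{W_j\setminus B_j(R)}dx\int_{W_\ell}dy\,p,
\]
so that the leading piece supplies the second sum in \eqref{e19}. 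The remainder is bounded by combining the Gaussian tail estimate $\int_{W_\ell}p(x,y;t)dy\leq 2\,e^{-d(x,W_\ell)^2/(8t)}$ with the elementary geometric inequality $d(x,W_\ell)\geq |x-V_i|\sin\alpha_{j\ell}$ valid for $x\in W_j$, where $\alpha_{j\ell}>0$ denotes the angular gap between $W_j$ and $W_\ell$. A polar change of variables centered at $V_i$ on $W_j$ then reduces the remainder to $O(t\,e^{-R^2(\sin\alpha_{j\ell})^2/(8t)})$, which is absorbed into $O(e^{-R^2(\sin\gamma)^2/(32t)})$ as $t\downarrow 0$.

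The most delicate step is the choice of $G$ in the first application of Lemma \ref{L}: it must be large enough to contain $E=B_j(R)$, yet its boundary pieces that survive intersection with the open set $\tilde D\cup F$ (the circular arcs; the radial pieces are removed by openness of $D$ and of each $W_\ell$) must stay at distance at least $R|\sin\gamma|/2$ from $E$, so that the resulting exponent matches precisely the rate $R^2(\sin\gamma)^2/(32t)$ announced in \eqref{e19}. Once this is in place, the remaining polar-integral tail bounds are routine.
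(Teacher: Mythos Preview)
Your approach is essentially the paper's: apply Lemma~\ref{L} to replace $D$ by $\bigcup_\ell W_\ell$ in the inner integral (the paper takes $E=\bigcup_j B_j(R)$ all at once and $G=\{z\in D: d(z,E)<\tfrac{R}{2}|\sin\gamma|\}$, but your term-by-term application with $G=\bigcup_\ell B_\ell(R+\delta)$ works equally well), then split the resulting double sum into diagonal and off-diagonal parts and extend $B_j(R)$ to $W_j$ in the latter.

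One point to watch in your last step: the absorption $O\bigl(te^{-R^2(\sin\alpha_{j\ell})^2/(8t)}\bigr)\subset O\bigl(e^{-R^2(\sin\gamma)^2/(32t)}\bigr)$ requires $(\sin\alpha_{j\ell})^2\ge(\sin\gamma)^2/4$, which is not automatic, since $\alpha_{j\ell}$ is an angle of the \emph{complement} of $D$ at $V_i$ and is not among the interior angles $\gamma_1,\dots,\gamma_n$ that $\gamma$ controls via~\eqref{e7}. The paper's proof asserts the even stronger bound $O(e^{-R^2/(8t)})$ at the corresponding step~\eqref{e21} without justification, so this is a shared loose end rather than a defect specific to your argument; in either case the remainder is exponentially small, only possibly with a different constant in the exponent than the one stated.
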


\begin{proof}
By Lemma~\ref{L} with $\tilde{D}=D$, $F=\cup_{j=i}^{i+k} W_{j}$, $E=\cup_{j=i}^{i+k} B_{j}(R)$ and
$G=\{z \in D : d(z, \cup_{j=i}^{i+k} B_{j}(R))<\frac{R}{2}\vert \sin \gamma \vert\},$
we have that
\begin{align}
&\int_{\cup_{j=i}^{i+k} B_{j}(R)}dx \int_{D} dy \, p(x,y;t) \label{e20}\\
&=\int_{\cup_{j=i}^{i+k} B_{j}(R)}dx \int_{\cup_{j=i}^{i+k} W_{j}} dy \, p(x,y;t)
+O(e^{-R^{2}(\sin \gamma)^{2}/(32t)}),\, t \downarrow 0.\notag
\end{align}
We also have that
\begin{align}
&\int_{\cup_{j=i}^{i+k} B_{j}(R)}dx \int_{\cup_{j=i}^{i+k} W_{j}} dy \, p(x,y;t) \label{e21}\\
&=\sum_{j=i}^{i+k}\int_{B_{j}(R)}dx \int_{W_{j}} dy \, p(x,y;t)
+\sum_{j \neq \ell, j,\ell=i}^{i+k} \int_{W_{j}}dx \int_{W_{\ell}}dy \, p(x,y;t) \notag\\
&+ O(e^{-R^{2}/(8t)}), \, t \downarrow 0.\notag
\end{align}
Combining \eqref{e20} and \eqref{e21} gives \eqref{e19}.
\end{proof}
%In Section~\ref{S3}, Lemma~\ref{L3.1} and Lemma~\ref{L3.2}, we compute
%\begin{equation}\label{e23a}
%$\int_{B_{j}(R)}dx \int_{W_{j}} dy \, p(x,y;t)$,
%\end{equation}
%and, for $j \neq \ell$,
%$\begin{equation}\label{e23b}
%$\int_{W_{j}}dx \int_{W_{\ell}}dy \, p(x,y;t)$
%\end{equation}
%respectively.

\begin{lemma}\label{L3}
Let $H \subset \R^{2}$ denote the half-plane such that $\emptyset \neq \partial S_{\gamma} \cap \partial D \subset \partial H$
and $S_{\gamma} \subset H$. Then
\begin{equation*}%\label{e24}
\int_{S_{\gamma}}dx \int_{D} dy \, p(x,y;t) = \int_{S_{\gamma}}dx \int_{H} dy \, p(x,y;t)
+O(e^{-R^{2}(\sin \gamma)^{2}/(32t)}), \, t \downarrow 0.
\end{equation*}
\end{lemma}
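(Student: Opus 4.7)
The plan is to apply Lemma~\ref{L} in the same spirit as the proofs of Lemmas~\ref{L1} and \ref{L2}. I take $\tilde{D} = D$, $F = H$, $E = S_\gamma$, and
\[
G = \left\{ z \in D \cap H \, : \, d(z, S_\gamma) < \tfrac{R}{2}\vert \sin\gamma \vert \right\}.
\]
The hypotheses of Lemma~\ref{L} are easy to verify: $D$ and $H$ are non-empty and open; $D \cap H$ contains $S_\gamma$, since $S_\gamma \subset D$ sits on the $D$-side of its supporting edge $e \subset \partial H$; $G$ is open by construction and contained in $D \cap H$; and $S_\gamma \subset G$ is a bounded measurable set.

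The main task will be to show that
\[
\epsilon = \inf\{\vert x - y\vert : x \in S_\gamma, \, y \in \overline{(D \cup H) \cap \partial G}\} \geq \tfrac{R}{2}\vert \sin\gamma \vert,
\]
because Lemma~\ref{L} then delivers an error of size $O(e^{-\epsilon^{2}/(8t)}) = O(e^{-R^{2}(\sin\gamma)^{2}/(32t)})$, exactly as claimed. The boundary $\partial G$ splits into two parts. Its ``free'' part, lying inside the open set $D \cap H$, consists by construction of points at distance exactly $\tfrac{R}{2}\vert \sin\gamma\vert$ from $S_\gamma$ and so automatically contributes $\geq \tfrac{R}{2}\vert\sin\gamma\vert$ to $\epsilon$. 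The remaining part of $\partial G$ sits on $\partial D \cup \partial H$, and I have to verify that those points of this set which belong to $D \cup H$ are no closer to $S_\gamma$ than $\tfrac{R}{2}\vert\sin\gamma\vert$.

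For a point on one of the two adjacent edges at the endpoint vertices $V$ of the supporting edge $e$, this bound uses \eqref{e7}. As soon as such a point exits the sector $B_{V}(R)$, i.e.\ sits at arc-length $\geq R$ from $V$ along the adjacent edge, its perpendicular distance to $e$ is at least $R\vert\sin\gamma_{V}\vert \geq R\vert \sin\gamma\vert$, whence its distance to $S_\gamma$ (which reaches only to height $\tfrac{R}{2}\vert\sin\gamma\vert$ above $e$) is at least $\tfrac{R}{2}\vert\sin\gamma\vert$. For non-adjacent edges of $\partial D$, and for any portion of the line $\partial H$ that may re-enter $D$ away from $e$, the required separation is forced by the disjointness and containment constraints built into the definition \eqref{e9} of $R$, together with the fact that $S_\gamma$ lies in the strip of thickness $\tfrac{R}{2}\vert\sin\gamma\vert$ along $e$ and is separated from the vertex sectors by the cusps of the partition.

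The main obstacle is this last geometric verification: ruling out that a distant, non-adjacent portion of $\partial D$ approaches $S_\gamma$ more closely than $\tfrac{R}{2}\vert\sin\gamma\vert$ is where the precise choice of the constant $R$ in \eqref{e9} enters in an essential way.
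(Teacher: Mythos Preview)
Your approach is essentially identical to the paper's: both apply Lemma~\ref{L} with $\tilde{D}=D$, $F=H$, $E=S_\gamma$, and $G$ the $\tfrac{R}{2}\vert\sin\gamma\vert$-neighbourhood of $S_\gamma$ (the paper takes the neighbourhood in $D$, you in $D\cap H$, which amounts to the same set here). The paper's proof is a single sentence invoking Lemma~\ref{L} with these choices and leaving the verification of $\epsilon\ge \tfrac{R}{2}\vert\sin\gamma\vert$ implicit; your additional paragraphs simply unpack that geometric check.
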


\begin{proof}
Using Lemma~\ref{L} with $\tilde{D}=D$, $F=H$, $E=S_{\gamma}$ and $G=\{z \in D : d(z,S_{\gamma})<\frac{R}{2}\vert \sin \gamma \vert\},$
the result follows.
\end{proof}
%In Section~\ref{SS4}, we compute
%\begin{equation}\label{e25}
%$\int_{S_{\gamma}}dx \int_{H} dy \, p(x,y;t).$
%\end{equation}

\begin{lemma}\label{L4}
Let $C_{i}$ denote the cusp which is adjacent to $S_{\gamma}$ and $B_{i}(R)$.
Let $H$ be the half-plane as in Lemma~\ref{L3}. Then
\begin{equation*}%\label{e26}
\int_{C_{i}}dx \int_{D} dy \, p(x,y;t) = \int_{C_{i}}dx \int_{H} dy \, p(x,y;t)
+O(e^{-R^{2}(\sin \gamma)^{2}/(32t)}), \, t \downarrow 0.
\end{equation*}
\end{lemma}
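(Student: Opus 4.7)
The plan is to mirror the proof of Lemma \ref{L3}. We invoke Lemma \ref{L} with $\tilde{D}=D$, $F=H$, $E=C_{i}$, and
\[
G = \{ z \in D : d(z, C_{i}) < \tfrac{R}{2}\vert \sin \gamma \vert\},
\]
the natural analogue of the open set used in the proof of Lemma \ref{L3}. The set $G$ is open and contains $C_{i}$. It lies in $D \cap H$ because the cusp has height at most $\tfrac{R}{2}\vert \sin \gamma \vert$ above the edge $e \subset \partial H$, and because the definition \eqref{e9} of $R$ ensures that the $\tfrac{R}{2}\vert \sin \gamma \vert$-neighbourhood of $C_{i}$ within $D$ stays on the $H$-side of the line supporting $e$.

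The substance is then to check the estimate $\epsilon \geq \tfrac{R}{2}\vert \sin \gamma \vert$, where $\epsilon = \inf\{\vert x - y \vert : x \in C_{i},\ y \in \overline{(D \cup H) \cap \partial G}\}$. Points of $\partial G$ lying inside $D$ are by construction at distance exactly $\tfrac{R}{2}\vert \sin \gamma \vert$ from $C_{i}$. Points of $\partial G$ lying on $e$ belong to $\partial H$, not $H$, so they do not themselves lie in $(D \cup H) \cap \partial G$; the only way a point of $e$ could enter the closure is as a limit of points in $(D \cup H) \cap \partial G$ that lie on other edges of $\partial D$. By the separation built into \eqref{e9}, together with the fact that $C_{i}$ sits beyond the arc of $B_{i}(R)$ and that the other edge emanating from $V_{i}$ makes angle $\gamma_{i}$ with $e$ (so is at distance $\geq R\vert \sin \gamma_{i}\vert \geq R\vert \sin \gamma\vert$ from the relevant points of $C_{i}$ via \eqref{e7}), any such limit point is at distance at least $\tfrac{R}{2}\vert \sin \gamma \vert$ from $C_{i}$. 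The vertex $V_{i}$ itself lies at distance $\geq R \geq \tfrac{R}{2}\vert \sin \gamma \vert$ from $C_{i}$. Thus $\epsilon^{2}/(8t) \geq R^{2}(\sin \gamma)^{2}/(32t)$, and Lemma \ref{L} yields the asymptotic statement.

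The only real obstacle is the geometric bookkeeping in the previous paragraph: one must sketch $C_{i}$ together with $B_{i}(R)$, $S_{\gamma}$, the second edge of $W_{i}$, and the opposite cusp on $e$, and check case by case that the $\tfrac{R}{2}\vert \sin \gamma \vert$-neighbourhood of $C_{i}$ in $D$ does not reach any face of $\partial D$ other than $e$. This is precisely the separation encoded in \eqref{e9} and is the same fact that powered the proofs of Lemmas \ref{L1}--\ref{L3}, so no new geometric estimate is needed.
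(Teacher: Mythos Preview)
Your proposal is correct and follows exactly the same route as the paper: the paper's proof is a single sentence invoking Lemma~\ref{L} with the identical choices $\tilde{D}=D$, $F=H$, $E=C_{i}$, and $G=\{z\in D:d(z,C_{i})<\tfrac{R}{2}|\sin\gamma|\}$, and then asserts that the result follows. Your additional paragraphs on verifying $G\subset D\cap H$ and bounding $\epsilon$ supply details the paper suppresses, but they are not a different argument.
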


\begin{proof}
Using Lemma~\ref{L} with $\tilde{D}=D$, $F=H$, $E=C_{i}$ and $G=\{z \in D : d(z,C_{i})<\frac{R}{2}\vert \sin \gamma \vert\},$
the result follows.
\end{proof}

%In Section~\ref{SS5}, we compute
%\begin{equation}\label{e27}
%$\int_{C_{i}}dx \int_{H} dy \, p(x,y;t).$
%\end{equation}

\section{The contribution to the heat content from points close to a vertex.}\label{S3}
In this section we approximate $u_{D}$ by $u_{W_{j}}$. We then compute the contribution to the heat
content of $D$ from a sector, $B_{j}(R)$ with corresponding angle $\gamma_{j}=\beta$. We also compute
the contribution to the heat content of $D$ from two disjoint wedges whose boundaries intersect in a vertex
of $\partial D$.

Firstly, we define
\begin{equation}\label{e30}
\mathcal{V}_{\beta}(t;R) = \int_{B_{j}(R)} dx \int_{W_{j}} dy \, p(x,y;t).
\end{equation}
\begin{lemma}\label{L3.1}
For $\beta \in (0,\pi)\cup(\pi,2\pi)$,
\begin{align*}
\mathcal{V}_{\beta}(t;R)
&=\frac{\beta R^{2}}{2}-\frac{2R}{\sqrt{\pi}}t^{1/2}+\left(\frac{1}{\pi}+\left(1- \frac{\beta}{\pi}\right)\cot \beta\right)t\\
&-(4 \pi t)^{-1/2}\int_{0}^{R\vert \sin \beta \vert}dx \,\left(-2Rx + R^{2}\arcsin \left(\frac{x}{R}\right)
+x\sqrt{R^{2}-x^{2}}\right)e^{-x^{2}/(4t)}\\
&+O(t e^{-R^{2}(\sin \beta)^{2}/(8t)}),\, t \downarrow 0.\\
\end{align*}
\end{lemma}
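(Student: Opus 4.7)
The plan is to reduce $\mathcal{V}_{\beta}(t;R)$ to a sum of explicit model integrals. Starting from $\int_{W_{j}} p(x,y;t)\,dy = 1 - \int_{W_{j}^{c}} p(x,y;t)\,dy$, I would write
\[
\mathcal{V}_{\beta}(t;R) = |B_{j}(R)| - \int_{B_{j}(R)} dx \int_{W_{j}^{c}} p(x,y;t)\,dy = \frac{\beta R^{2}}{2} - \mathcal{I}(t;R).
\]
For $\beta \in (0,\pi)$, express $W_{j} = H_{1} \cap H_{2}$ as the intersection of the two half-planes supporting the edges; then $W_{j}^{c} = H_{1}^{c} \cup H_{2}^{c}$, and $H_{1}^{c} \cap H_{2}^{c}$ is the wedge $W_{j}'$ of angle $\beta$ opposite to $W_{j}$. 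Inclusion-exclusion and the reflection symmetry of $B_{j}(R)$ yield $\mathcal{V}_{\beta} = \tfrac{\beta R^{2}}{2} - 2J_{1} + J_{12}$, where $J_{1} = \int_{B_{j}(R)} dx \int_{H_{1}^{c}} p\,dy$ and $J_{12} = \int_{B_{j}(R)} dx \int_{W_{j}'} p\,dy$. The reflex case $\beta \in (\pi, 2\pi)$ I would treat by duality, using $W_{j} = \R^{2} \setminus W_{j}^{\rm comp}$ with $W_{j}^{\rm comp}$ a convex wedge of angle $2\pi - \beta$; this is consistent with the symmetries $g(\beta) = g(2\pi - \beta)$ and $|\sin\beta| = |\sin(2\pi - \beta)|$ already visible in the statement.

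To handle $J_{1}$ I would choose coordinates with $V_{j}$ at the origin and $H_{1} = \{y_{2} > 0\}$, so that $\int_{H_{1}^{c}} p(x,y;t)\,dy = \tfrac{1}{2}\mathrm{erfc}(x_{2}/(2\sqrt{t}))$. Slicing $B_{j}(R)$ by horizontal lines $x_{2} = v$, the chord at height $v \in (0, R|\sin\beta|)$ has length $\sqrt{R^{2} - v^{2}} - v\cot\beta$, while heights $v > R|\sin\beta|$ contribute only exponentially small terms. Thus
\[
2J_{1} = \int_{0}^{R|\sin\beta|} \mathrm{erfc}\!\left(\frac{v}{2\sqrt{t}}\right)\bigl(\sqrt{R^{2}-v^{2}} - v\cot\beta\bigr)\,dv + O\bigl(e^{-R^{2}\sin^{2}\beta/(8t)}\bigr).
\]
I would then decompose the bracket as $R + (\sqrt{R^{2}-v^{2}} - R) - v\cot\beta$. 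The constant $R$ yields the perimeter term $\tfrac{2R}{\sqrt{\pi}}t^{1/2}$ via $\int_{0}^{\infty} \mathrm{erfc}(v/(2\sqrt{t}))\,dv = 2\sqrt{t/\pi}$; the $-v\cot\beta$ yields $-t\cot\beta$ via $\int_{0}^{\infty} v\,\mathrm{erfc}(v/(2\sqrt{t}))\,dv = t$; and integration by parts on the remaining piece, with antiderivative $F(v) = \tfrac{1}{2}\bigl(v\sqrt{R^{2}-v^{2}} + R^{2}\arcsin(v/R)\bigr) - Rv$ (so $F(0) = 0$ and $F(R\sin\beta)$ is absorbed by the exponential remainder), reproduces exactly the correction integral stated in the lemma.

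For $J_{12}$ I would parametrise $x = se^{i\phi}$ with $0 < s < R$, $0 < \phi < \beta$, and $y = -re^{i\psi}$ with $r > 0$, $0 < \psi < \beta$, so that $|x-y|^{2} = s^{2} + r^{2} + 2sr\cos(\phi-\psi)$. Rescaling $s = \sqrt{t}\,a$, $r = \sqrt{t}\,b$ and extending the $a$-integration from $R/\sqrt{t}$ to $\infty$ at exponentially small cost gives
\[
J_{12} = \frac{t}{4\pi}\int_{0}^{\beta}\!\int_{0}^{\beta} d\phi\,d\psi \int_{0}^{\infty}\!\!\int_{0}^{\infty} ab\, e^{-(a^{2}+b^{2}+2ab\cos(\phi-\psi))/4}\,da\,db + O\bigl(t\, e^{-R^{2}/(4t)}\bigr).
\]
The inner $(a,b)$-integral I would compute by substituting $a = bu$, reducing it to $\int_{0}^{\infty} 8u/((u+A)^{2}+B^{2})^{2}\,du$ with $A = \cos(\phi-\psi)$, $B = \sin(\phi-\psi)$, and using $\arctan(A/B) = \pi/2 - |\phi-\psi|$ for $|\phi-\psi| \in (0,\pi)$ to collapse the result to $4(1 - |\phi-\psi|\cot|\phi-\psi|)/\sin^{2}(\phi-\psi)$. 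Applying the reduction $\int_{0}^{\beta}\!\int_{0}^{\beta} f(\phi-\psi)\,d\phi\,d\psi = 2\int_{0}^{\beta}(\beta - \gamma)f(\gamma)\,d\gamma$, an integration by parts, and the antiderivative identity $\int \tfrac{\gamma - \sin\gamma\cos\gamma}{2\sin^{2}\gamma}\,d\gamma = -\tfrac{\gamma\cot\gamma}{2}$ (verified by differentiation), together with $\lim_{\gamma \to 0}\gamma\cot\gamma = 1$, one arrives at $J_{12} = \tfrac{t}{\pi}(1 - \beta\cot\beta) + O(\mathrm{exp})$.

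Assembling $\mathcal{V}_{\beta} = \tfrac{\beta R^{2}}{2} - 2J_{1} + J_{12}$, the combination $+t\cot\beta$ (from $-2J_{1}$) with $\tfrac{t}{\pi}(1 - \beta\cot\beta)$ (from $J_{12}$) produces the angular coefficient $\tfrac{1}{\pi} + (1-\beta/\pi)\cot\beta = g(\beta)$ claimed in the lemma. The main technical obstacle I expect is the explicit evaluation of $J_{12}$: the constituent antiderivatives $\int \gamma/(2\sin^{2}\gamma)\,d\gamma$ and $\int \cos\gamma/(2\sin\gamma)\,d\gamma$ individually diverge at $\gamma = 0$, so the logarithmic cancellations leading to the clean $-\gamma\cot\gamma/2$ must be tracked carefully, and the rescaling argument must be controlled so that replacing $B_{j}(R)$ by an unbounded sector costs only the exponential remainder.
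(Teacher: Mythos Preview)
Your proposal is correct and takes a genuinely different route from the paper's proof.

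The paper computes $\mathcal{V}_\beta(t;R)$ by a direct attack in polar coordinates: after the substitution $\rho=r_2-r_1\cos(\theta_1-\theta_2)$ it splits the integral into five pieces $I_1,\dots,I_5$, evaluates $I_3+I_4$ by a case analysis on $\beta\in[0,\pi/2]$, $[\pi/2,3\pi/2]$, $[3\pi/2,2\pi]$, and treats $I_5$ via a separate table-of-integrals identity and a piecewise correction coming from $\arctan(\tan\sigma)$. The correction integral in the statement only emerges at the very end, after two further integrations by parts.

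Your decomposition $\mathcal{V}_\beta=\tfrac{\beta R^2}{2}-2J_1+J_{12}$ is more geometric. The half-plane piece $J_1$ is handled by one-dimensional slicing and the $\mathrm{erfc}$ moments; this produces the perimeter term, the $t\cot\beta$ contribution, and the correction integral in one stroke, with the latter arising transparently as the integration-by-parts remainder from the curvature of the circular arc. The wedge-to-opposite-wedge piece $J_{12}$ is precisely the specialisation of the paper's Lemma~\ref{L3.2} to $\alpha=\pi-\beta$, $\gamma_1=\gamma_2=\beta$, and indeed $k(\pi-\beta,\beta,\beta)=\tfrac{1}{\pi}(1-\beta\cot\beta)$ matches your value. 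For the reflex case you invoke duality; this can be made rigorous via the identity $\mathcal{V}_\beta(t;R)-\tfrac{\beta R^2}{2}=\mathcal{V}_{2\pi-\beta}(t;R)-\tfrac{(2\pi-\beta)R^2}{2}$, which follows from $\int_{B(0,R)}\!\int_{W_{\beta'}}p=\tfrac{\beta' R^2}{2}$ by rotational symmetry.

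Two small points to tighten: (i) for $\pi/2<\beta<\pi$ the sector extends above height $R\sin\beta$, so your chord formula must be supplemented by a contribution from $v\in(R\sin\beta,R)$, but this is $O(e^{-R^2\sin^2\beta/(4t)})$ as you indicate; (ii) the tail estimate when you extend $B_j(R)$ to $W_j$ in $J_{12}$ is $O(te^{-R^2\sin^2\beta/(4t)})$ rather than $O(te^{-R^2/(4t)})$ for $\beta>\pi/2$, since $\operatorname{dist}(x,W_j')\ge |x|\,|\sin\beta|$ there --- still well within the stated remainder. Your approach has the advantage of explaining \emph{why} the answer has the shape it does and of reusing Lemma~\ref{L3.2}; the paper's approach is more self-contained and handles all $\beta\in(0,2\pi)$ in one calculation without appealing to duality.
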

\begin{proof}
Changing to polar coordinates with $x=(r_{1}\cos \theta_{1},r_{1}\sin\theta_{1})$ and $y=(r_{2}\cos \theta_{2},r_{2}\sin\theta_{2})$
in \eqref{e30} gives that
\begin{align*}
&\mathcal{V}_{\beta}(t;R)\\
&=(4 \pi t)^{-1} \int_{0}^{\beta} d \theta_{1} \int_{0}^{\beta} d \theta_{2} \int_{0}^{R} dr_{1}
\int_{0}^{\infty} dr_{2} (r_{1} r_{2}) e^{-(r_{1}^{2} + r_{2}^{2})/(4t) + 2r_{1} r_{2}A/(4t)},
\end{align*}
where $A=\cos (\theta_{1}-\theta_{2})$. The change of variable $r_{2} - r_{1} A = \rho$ gives that
\begin{align*}
\mathcal{V}_{\beta}(t;R)
&=(4 \pi t)^{-1} \int_{0}^{\beta} d \theta_{1} \int_{0}^{\beta} d \theta_{2} \int_{0}^{R} dr_{1}
\int_{0}^{\infty} dr_{2} (r_{1} r_{2}) e^{-(r_{2}-r_{1}A)^{2}/(4t)-r_{1}^{2}(1-A^{2})/(4t)}\\
&=(4 \pi t)^{-1} \int_{0}^{\beta} d \theta_{1} \int_{0}^{\beta} d \theta_{2} \int_{0}^{R} r \, dr
\int_{-Ar}^{\infty} d \rho \, (\rho + Ar) e^{-\rho^{2}/(4t) - r^{2}(1-A^{2})/(4t)}\\
&= I_{1} + I_{2}.
\end{align*}
We have that
\begin{align}
I_{1}
&=(4 \pi t)^{-1} \int_{0}^{\beta} d \theta_{1} \int_{0}^{\beta} d \theta_{2} \int_{0}^{R} r \, dr
\int_{-Ar}^{\infty} d \rho \, \rho e^{-\rho^{2}/(4t) - r^{2}(1-A^{2})/(4t)} \notag\\
&=\frac{\beta^{2}}{\pi} t (1-e^{-R^{2}/(4t)}), \label{e30a}
\end{align}
and
\begin{align*}
I_{2}
&=(4 \pi t)^{-1} \int_{0}^{\beta} d \theta_{1} \int_{0}^{\beta} d \theta_{2} \int_{0}^{R} dr \, Ar^{2}
\int_{-Ar}^{\infty} d \rho \, e^{-\rho^{2}/(4t) - r^{2}(1-A^{2})/(4t)}\\
&=(4 \pi t)^{-1} \int_{0}^{\beta} d \theta_{1} \int_{0}^{\beta} d \theta_{2} \int_{0}^{R} dr \, Ar^{2}
e^{-r^{2}(1-A^{2})/(4t)}\int_{0}^{\infty} d \rho \, e^{-\rho^{2}/(4t)}\\
&\ \ \ +(4 \pi t)^{-1} \int_{0}^{\beta} d \theta_{1} \int_{0}^{\beta} d \theta_{2} \int_{0}^{R} dr \, A^{2}r^{2}
e^{-r^{2}(1-A^{2})/(4t)}\int_{0}^{r} d \rho \, e^{-A^{2}\rho^{2}/(4t)}\\
&=(4 \pi t)^{-1} \int_{0}^{\beta} d \theta_{1} \int_{0}^{\beta} d \theta_{2} \int_{0}^{R} dr \, Ar^{2}
e^{-r^{2}(1-A^{2})/(4t)}\int_{0}^{\infty} d \rho \, e^{-\rho^{2}/(4t)}\\
&\ \ \ +(4 \pi t)^{-1} \int_{0}^{\beta} d \theta_{1} \int_{0}^{\beta} d \theta_{2} \int_{0}^{R} dr \, A^{2}r^{2}
e^{-r^{2}(1-A^{2})/(4t)}\int_{0}^{\infty} d \rho \, e^{-A^{2}\rho^{2}/(4t)}\\
&\ \ \ -(4 \pi t)^{-1} \int_{0}^{\beta} d \theta_{1} \int_{0}^{\beta} d \theta_{2} \int_{0}^{R} dr \, A^{2}r^{2}
e^{-r^{2}(1-A^{2})/(4t)}\int_{r}^{\infty} d \rho \, e^{-A^{2}\rho^{2}/(4t)}\\
&=I_{3} + I_{4} + I_{5}.
\end{align*}
Via the change of variables $\theta_{1}-\theta_{2}=-\eta$ and integrating by parts with respect
to $\theta$, we obtain
\begin{align}
I_{3}
&=(4 \pi t)^{-1} \int_{0}^{\beta} d \theta_{1} \int_{0}^{\beta} d \theta_{2} \int_{0}^{R} dr \, Ar^{2}
e^{-r^{2}(1-A^{2})/(4t)}\int_{0}^{\infty} d \rho \, e^{-\rho^{2}/(4t)}\notag\\
&=(4 \pi t)^{-1/2} \int_{0}^{\beta} d \theta_{1} \int_{\theta_{1}}^{\beta} d \theta_{2} \int_{0}^{R} dr \, r^{2}
\cos (\theta_{1}-\theta_{2}) \, e^{-r^{2}(\sin (\theta_{1}-\theta_{2})^{2})/(4t)}\notag\\
&=(4 \pi t)^{-1/2} \int_{0}^{\beta} d \theta_{1}\int_{0}^{\beta -\theta_{1}} d \eta \int_{0}^{R} dr \,
r^{2} \cos \eta \, e^{-r^{2}(\sin \eta)^{2}/(4t)}\notag\\
&=(4 \pi t)^{-1/2} \int_{0}^{\beta} d \theta \int_{0}^{\theta} d \eta \int_{0}^{R} dr \,
r^{2} \cos \eta \, e^{-r^{2}(\sin \eta)^{2}/(4t)}\notag\\
&=\beta(4 \pi t)^{-1/2} \int_{0}^{\beta}d\eta \int_{0}^{R} dr \, r^{2} \cos \eta \,
e^{-r^{2}(\sin \eta)^{2}/(4t)}\label{e30b}\\
&\ \ \ -(4 \pi t)^{-1/2}\int_{0}^{\beta} d \theta \, \theta
\int_{0}^{R} dr \, r^{2} \cos \theta \, e^{-r^{2}(\sin \theta)^{2}/(4t)}.\notag
\end{align}
Similarly,
\begin{align}
I_{4}&=\beta(4 \pi t)^{-1/2} \int_{0}^{\beta}d\eta \int_{0}^{R} dr \, r^{2} \vert \cos \eta \vert
\, e^{-r^{2}(\sin \eta)^{2}/(4t)}\label{e30c}\\
&\ \ \ - (4 \pi t)^{-1/2}\int_{0}^{\beta} d \theta \, \theta
\int_{0}^{R} dr \, r^{2} \vert \cos \theta \vert \, e^{-r^{2}(\sin \theta)^{2}/(4t)}.\notag
\end{align}
We first compute $I_{3}+I_{4}$ and then we deal with $I_{5}$.
By \eqref{e30b} and \eqref{e30c}, we have that
\begin{equation}\label{e30d}
I_{3}+I_{4}
=2(4 \pi t)^{-1/2} \int_{0}^{R} dr \, r^{2}  \int_{B} d\eta \, (\beta - \eta) \cos \eta
\, e^{-r^{2}(\sin \eta)^{2}/(4t)},
\end{equation}
where $B=[0,\beta] \cap ([0,\frac{\pi}{2}] \cup [\frac{3\pi}{2},2\pi])$.
First suppose $\beta \in [0,\frac{\pi}{2}]$, then by \eqref{e30d} we have that
\begin{align}
&I_{3}+I_{4}\notag\\
&=2(4 \pi t)^{-1/2} \int_{0}^{R} dr \, r^{2}  \int_{0}^{\beta} d\eta \, (\beta - \eta) \cos \eta
\, e^{-r^{2}(\sin \eta)^{2}/(4t)}\notag\\
&=2(4 \pi t)^{-1/2} \int_{0}^{R} dr \, r^{2}  \int_{0}^{\sin \beta} d\psi \, (\beta - \arcsin \psi)
\, e^{-r^{2}\psi^{2}/(4t)}\notag\\
&=2\beta(4 \pi t)^{-1/2} \int_{0}^{R} dr \, r^{2}  \int_{0}^{\sin \beta} d\psi \, e^{-r^{2}\psi^{2}/(4t)}
-2(4 \pi t)^{-1/2} \int_{0}^{R} dr \, r^{2}  \int_{0}^{\sin \beta} d\psi \, \psi e^{-r^{2}\psi^{2}/(4t)}\notag\\
&\ \ \ -2(4 \pi t)^{-1/2} \int_{0}^{R} dr \, r^{2}  \int_{0}^{\sin \beta} d\psi \, (\arcsin \psi - \psi) e^{-r^{2}\psi^{2}/(4t)}\notag
\end{align}
\begin{align}
&=2\beta(4 \pi t)^{-1/2} \int_{0}^{R} dr \, r^{2}  \int_{0}^{\infty} d\psi \, e^{-r^{2}\psi^{2}/(4t)}
-2\beta(4 \pi t)^{-1/2} \int_{0}^{R} dr \, r^{2}  \int_{\sin \beta}^{\infty} d\psi \, e^{-r^{2}\psi^{2}/(4t)}\notag\\
&\ \ \ +4t(4\pi t)^{-1/2}\int_{0}^{R}dr \, (e^{-r^{2}(\sin \beta)^{2}/(4t)} -1)\notag\\
&\ \ \ -2(4 \pi t)^{-1/2} \int_{0}^{\sin \beta} d\psi \, (\arcsin \psi - \psi) \int_{0}^{R} dr \, r^{2} e^{-r^{2}\psi^{2}/(4t)}\notag\\
&=\frac{\beta R^{2}}{2}-2\beta(4 \pi t)^{-1/2} \int_{\sin \beta}^{\infty} d\psi \int_{0}^{\infty} dr \, r^{2} \, e^{-r^{2}\psi^{2}/(4t)}
-\frac{2R}{\sqrt{\pi}}t^{1/2} + \frac{2}{\sin \beta}t\notag\\
&\ \ \ -2(4 \pi t)^{-1/2} \int_{0}^{\sin \beta} d\psi \, (\arcsin \psi - \psi) \int_{0}^{\infty} dr \, r^{2} e^{-r^{2}\psi^{2}/(4t)}\notag\\
&\ \ \ +2(4 \pi t)^{-1/2} \int_{0}^{\sin \beta} d\psi \, (\arcsin \psi - \psi) \int_{R}^{\infty} dr \, r^{2} e^{-r^{2}\psi^{2}/(4t)}
+O(te^{-R^{2}(\sin \beta)^{2}/(8t)})\notag\\
&=\frac{\beta R^{2}}{2}-\frac{2R}{\sqrt{\pi}}t^{1/2} - 2\beta t\int_{\sin \beta}^{\infty} \frac{d\psi}{\psi^{3}} + \frac{2}{\sin \beta}t
-2t\int_{0}^{\sin \beta} d\psi \, \left(\frac{\arcsin \psi - \psi}{\psi^{3}}\right)\notag\\
&\ \ \ +2(4 \pi t)^{-1/2} \int_{0}^{\sin \beta} d\psi \, (\arcsin \psi - \psi) \int_{R}^{\infty} dr \, r^{2} e^{-r^{2}\psi^{2}/(4t)}
+O(te^{-R^{2}(\sin \beta)^{2}/(8t)})\notag\\
&=\frac{\beta R^{2}}{2}-\frac{2R}{\sqrt{\pi}}t^{1/2} + (\cot \beta)t
+2(4 \pi t)^{-1/2} \int_{0}^{\sin \beta} d\psi \, (\arcsin \psi - \psi) \int_{R}^{\infty} dr \, r^{2} e^{-r^{2}\psi^{2}/(4t)}\label{e30e}\\
&\ \ \ +O(te^{-R^{2}(\sin \beta)^{2}/(8t)}),\, t \downarrow 0.\notag
\end{align}
Similarly, for $\beta \in [\frac{\pi}{2},\frac{3\pi}{2}]$, we have that
\begin{align}
&I_{3}+I_{4}\notag\\
&=2(4 \pi t)^{-1/2} \int_{0}^{R} dr \, r^{2}  \int_{0}^{\frac{\pi}{2}} d\eta \, (\beta - \eta) \cos \eta
\, e^{-r^{2}(\sin \eta)^{2}/(4t)}\notag\\
&=2(4 \pi t)^{-1/2} \int_{0}^{R} dr \, r^{2}  \int_{0}^{1} d\psi \, (\beta - \arcsin \psi)
\, e^{-r^{2}\psi^{2}/(4t)}\notag\\
&=2\beta(4 \pi t)^{-1/2} \int_{0}^{R} dr \, r^{2}  \int_{0}^{1} d\psi \, e^{-r^{2}\psi^{2}/(4t)}
-2(4 \pi t)^{-1/2} \int_{0}^{R} dr \, r^{2}  \int_{0}^{1} d\psi \, \psi e^{-r^{2}\psi^{2}/(4t)}\notag\\
&\ \ \ -2(4 \pi t)^{-1/2} \int_{0}^{R} dr \, r^{2}  \int_{0}^{1} d\psi \, (\arcsin \psi - \psi) e^{-r^{2}\psi^{2}/(4t)}\notag\\
&=2\beta(4 \pi t)^{-1/2} \int_{0}^{R} dr \, r^{2}  \int_{0}^{\infty} d\psi \, e^{-r^{2}\psi^{2}/(4t)}
-2\beta(4 \pi t)^{-1/2} \int_{0}^{R} dr \, r^{2}  \int_{1}^{\infty} d\psi \, e^{-r^{2}\psi^{2}/(4t)}\notag\\
&\ \ \ +4t(4\pi t)^{-1/2}\int_{0}^{R}dr \, (e^{-r^{2}/(4t)} -1)\notag\\
&\ \ \ -2(4 \pi t)^{-1/2} \int_{0}^{1} d\psi \, (\arcsin \psi - \psi) \int_{0}^{R} dr \, r^{2} e^{-r^{2}\psi^{2}/(4t)}\notag\\
&=\frac{\beta R^{2}}{2}-2\beta t \int_{1}^{\infty} \frac{d\psi}{\psi^{3}}
-\frac{2R}{\sqrt{\pi}}t^{1/2} + 2t -2t \int_{0}^{1} d\psi \, \left(\frac{\arcsin \psi - \psi}{\psi^{3}}\right)\notag\\
&\ \ \ +2(4 \pi t)^{-1/2} \int_{0}^{1} d\psi \, (\arcsin \psi - \psi) \int_{R}^{\infty} dr \, r^{2} e^{-r^{2}\psi^{2}/(4t)}
+O(te^{-R^{2}/(8t)})\notag\\
&=\frac{\beta R^{2}}{2}-\frac{2R}{\sqrt{\pi}}t^{1/2} + \left(\frac{\pi}{2} - \beta\right)t
+2(4 \pi t)^{-1/2} \int_{0}^{\vert \sin \beta \vert} d\psi \, (\arcsin \psi - \psi) \int_{R}^{\infty} dr \, r^{2} e^{-r^{2}\psi^{2}/(4t)}\label{e30f}\\
&\ \ \ +O(te^{-R^{2}(\sin \beta)^{2}/(8t)}),\, t \downarrow 0.\notag
\end{align}
In order to have \eqref{e30f} in the same format as \eqref{e30e}, we replaced 1 by $\vert \sin \beta \vert$
in \eqref{e30f}. For $\beta \in [\frac{3\pi}{2},2\pi]$, we have that
\begin{align*}
I_{3}+I_{4}
&=2(4 \pi t)^{-1/2} \int_{0}^{R} dr \, r^{2}  \int_{0}^{\frac{\pi}{2}} d\eta \, (\beta - \eta) \cos \eta
\, e^{-r^{2}(\sin \eta)^{2}/(4t)}\\
&\ \ \ +2(4 \pi t)^{-1/2} \int_{0}^{R} dr \, r^{2}  \int_{\frac{3\pi}{2}}^{\beta} d\eta \, (\beta - \eta) \cos \eta
\, e^{-r^{2}(\sin \eta)^{2}/(4t)}\\
&=J_{1}+J_{2}.
\end{align*}
Now by \eqref{e30f},
\begin{align}
J_{1}&=\frac{\beta R^{2}}{2}-\frac{2R}{\sqrt{\pi}}t^{1/2} + \left(\frac{\pi}{2} - \beta\right)t
+2(4 \pi t)^{-1/2} \int_{0}^{\vert \sin \beta \vert} d\psi \, (\arcsin \psi - \psi) \int_{R}^{\infty} dr \, r^{2} e^{-r^{2}\psi^{2}/(4t)}\label{e30g}\\
&\ \ \ +O(te^{-R^{2}(\sin \beta)^{2}/(8t)}),\, t \downarrow 0.\notag
\end{align}
Via the change of variables $\eta'=2\pi - \eta$, we also have that
\begin{align}
J_{2}&=2(4 \pi t)^{-1/2} \int_{\frac{3\pi}{2}}^{\beta} d\eta  \, (\beta - \eta) \cos \eta \int_{0}^{R} dr \, r^{2}
\, e^{-r^{2}(\sin \eta)^{2}/(4t)}\notag\\
&=2(4 \pi t)^{-1/2} \int_{2\pi-\beta}^{\frac{\pi}{2}} d\eta  \, (\beta + \eta - 2\pi) \cos \eta \int_{0}^{R} dr \, r^{2}
\, e^{-r^{2}(\sin \eta)^{2}/(4t)}\notag\\
&=2t\int_{2\pi-\beta}^{\frac{\pi}{2}} d\eta  \, (\beta + \eta - 2\pi) \frac{\cos \eta}{(\sin \eta)^{3}}\notag\\
&\ \ \ -2(4 \pi t)^{-1/2} \int_{2\pi-\beta}^{\frac{\pi}{2}} d\eta  \, (\beta + \eta - 2\pi) \cos \eta \int_{R}^{\infty} dr \, r^{2}
\, e^{-r^{2}(\sin \eta)^{2}/(4t)}\notag\\
&=\left(\frac{3\pi}{2}-\beta - \cot\beta\right)t + O(te^{-R^{2}(\sin \beta)^{2}/(8t)}),\, t \downarrow 0.\label{e30h}
\end{align}
Hence by \eqref{e30g} and \eqref{e30h}, we see that for $\beta \in [\frac{3\pi}{2},2\pi]$ and $t \downarrow 0$,
\begin{align}
I_{3}+I_{4}
&=\frac{\beta R^{2}}{2}-\frac{2R}{\sqrt{\pi}}t^{1/2} +(2\pi - 2\beta -\cot\beta)t
\label{e30i}\\
&\ \ \ +2(4 \pi t)^{-1/2} \int_{0}^{\vert \sin \beta \vert} d\psi \, (\arcsin \psi - \psi) \int_{R}^{\infty} dr \, r^{2} e^{-r^{2}\psi^{2}/(4t)}
+O(te^{-R^{2}(\sin \beta)^{2}/(8t)}).\notag
\end{align}
It remains to compute $I_{5}$. Via the change of variables $\rho=r\rho'$, we see that
\begin{align}
I_{5}
&=-(4 \pi t)^{-1} \int_{0}^{\beta} d \theta_{1} \int_{0}^{\beta} d \theta_{2} \int_{0}^{R} dr \, A^{2}r^{2}
e^{-r^{2}(1-A^{2})/(4t)}\int_{r}^{\infty} d \rho \, e^{-A^{2}\rho^{2}/(4t)}\notag\\
&=-(4 \pi t)^{-1} \int_{0}^{\beta} d \theta_{1} \int_{0}^{\beta} d \theta_{2} \int_{0}^{R} dr \, A^{2}r^{3}
e^{-r^{2}(1-A^{2})/(4t)}\int_{1}^{\infty} d \rho \, e^{-r^{2}A^{2}\rho^{2}/(4t)}\notag\\
&=-(8 \pi t)^{-1} \int_{0}^{\beta} d \theta_{1} \int_{0}^{\beta} d \theta_{2} \int_{0}^{R^{2}} dr \, A^{2}r
\int_{1}^{\infty} d \rho \, e^{-r(A^{2}\rho^{2}+1-A^{2})/(4t)}\notag\\
&=-(8 \pi t)^{-1} \int_{0}^{\beta} d \theta_{1} \int_{0}^{\beta} d \theta_{2} \int_{1}^{\infty} d \rho
\int_{0}^{\infty} dr \, A^{2}r e^{-r(A^{2}\rho^{2}+1-A^{2})/(4t)}\notag\\
&\ \ \ +(8 \pi t)^{-1} \int_{0}^{\beta} d \theta_{1} \int_{0}^{\beta} d \theta_{2} \int_{1}^{\infty} d \rho
\int_{R^{2}}^{\infty} dr \, A^{2}r e^{-r(A^{2}\rho^{2}+1-A^{2})/(4t)}\notag\\
&=-\frac{2t}{\pi}\int_{0}^{\beta} d \theta_{1} \int_{0}^{\beta} d \theta_{2} \int_{1}^{\infty} d \rho
\frac{A^{2}}{(A^{2}\rho^{2}+1-A^{2})^{2}} + O(te^{-R^{2}/(4t)}),\, t \downarrow 0.\label{e31a}
\end{align}
By 2.173 (1) and 2.172 in \cite{GR}, we see that
\begin{equation}\label{e33a}
\int_{1}^{\infty} \frac{d\rho}{(A^{2}\rho^{2}+1-A^{2})^{2}}= \frac{-1}{2(1-A^{2})}
+\frac{1}{2\vert A \vert (1-A^{2})^{3/2}}\arctan \left(\frac{\sqrt{1-A^{2}}}{\vert A \vert}\right).
\end{equation}
Therefore by \eqref{e31a}, \eqref{e33a} and the change of variables $\theta_{2}-\theta_{1}=\sigma$,
we obtain that
\begin{align*}
&-\frac{2t}{\pi}\int_{0}^{\beta} d \theta_{1} \int_{0}^{\beta} d \theta_{2} \int_{1}^{\infty} d \rho
\frac{A^{2}}{(A^{2}\rho^{2}+1-A^{2})^{2}}\notag\\
&=-\frac{2t}{\pi}\int_{0}^{\beta} d \theta \int_{0}^{\theta} d \sigma
\left(-(\cot \sigma)^{2} + \frac{\cos \sigma}{(\sin \sigma)^{3}}\arctan (\tan \sigma)\right).%\label{e33b}
\end{align*}
Thus
\begin{equation*}%\label{e34}
I_{5}=-\frac{2t}{\pi}\int_{0}^{\beta} d \theta \int_{0}^{\theta} d \sigma
\left(-(\cot \sigma)^{2} + \frac{\cos \sigma}{(\sin \sigma)^{3}}\arctan (\tan \sigma)\right)
+O(t e^{-R^{2}/(4t)}),\, t \downarrow 0.
\end{equation*}
We note that
\begin{equation*}%\label{e34b}
\arctan (\tan \sigma) = \sigma + U(\sigma),
\end{equation*}
where
\[U(\sigma) = \begin{cases}
0, &\text{if $\sigma \in (0,\frac{\pi}{2})$;}\\
-\pi, &\text{if $\sigma \in (\frac{\pi}{2},\frac{3\pi}{2})$;}\\
-2\pi, &\text{if $\sigma \in (\frac{3\pi}{2},2\pi)$.}
\end{cases}\]
Hence it is necessary to compute
\begin{align*}
-\frac{2t}{\pi}\int_{0}^{\beta} d \theta \int_{0}^{\theta} d \sigma
\left(-(\cot \sigma)^{2} + \frac{\sigma \cos \sigma}{(\sin \sigma)^{3}}\right)
=&-\frac{2t}{\pi}\int_{0}^{\beta} d \theta \left(\frac{\cot \theta}{2} + \theta -
\frac{\theta}{2(\sin \theta)^{2}}\right)\notag\\
=&\left(-\frac{\beta^{2}}{\pi}- \frac{\beta}{\pi}\cot \beta + \frac{1}{\pi}\right)t.%\label{e34c}
\end{align*}
If $\beta \in (\frac{\pi}{2},\frac{3\pi}{2})$, then
\begin{equation*}%\label{e34d}
-\frac{2t}{\pi}\int_{0}^{\beta} d \theta \int_{0}^{\theta} d \sigma \, U(\sigma) \frac{\cos \sigma}{(\sin \sigma)^{3}}
=2t\int_{\frac{\pi}{2}}^{\beta}d\theta \int_{\frac{\pi}{2}}^{\theta}
d\sigma \frac{\cos \sigma}{(\sin \sigma)^{3}} =\left(\beta + \cot \beta - \frac{\pi}{2}\right)t,
\end{equation*}
and if $\beta \in (\frac{3\pi}{2},2\pi)$, then
\begin{align*}
&-\frac{2t}{\pi}\int_{0}^{\beta} d \theta \int_{0}^{\theta} d \sigma \, U(\sigma) \frac{\cos \sigma}{(\sin \sigma)^{3}}\notag\\
&= 2t\int_{\frac{\pi}{2}}^{\frac{3\pi}{2}}d\theta \int_{\frac{\pi}{2}}^{\theta} d\sigma \frac{\cos \sigma}{(\sin \sigma)^{3}}
+ 2t\int_{\frac{3\pi}{2}}^{\beta}d\theta \int_{\frac{\pi}{2}}^{\frac{3\pi}{2}}d\sigma \frac{\cos \sigma}{(\sin \sigma)^{3}}
+4t\int_{\frac{3\pi}{2}}^{\beta}d\theta \int_{\frac{3\pi}{2}}^{\theta}d\sigma \frac{\cos \sigma}{(\sin \sigma)^{3}}\notag\\
&=(2\beta + 2\cot\beta -2\pi)t.%\label{e34e}
\end{align*}
Hence
\begin{equation}\label{e34f}
I_{5}=\begin{cases}
\left(-\frac{\beta^{2}}{\pi}- \frac{\beta}{\pi}\cot \beta + \frac{1}{\pi}\right)t,
&\text{if $\beta \in (0,\frac{\pi}{2})$;}\\
\left(-\frac{\beta^{2}}{\pi}+ \left(1- \frac{\beta}{\pi}\right)\cot \beta +\beta - \frac{\pi}{2} + \frac{1}{\pi}\right)t,
&\text{if $\beta \in (\frac{\pi}{2},\frac{3\pi}{2})$;}\\
\left(-\frac{\beta^{2}}{\pi}+ \left(1- \frac{\beta}{\pi}\right)\cot \beta +\cot\beta + 2\beta -2\pi + \frac{1}{\pi}\right)t,
&\text{if $\beta \in (\frac{3\pi}{2},2\pi)$;}\\
\end{cases}
\end{equation}
Therefore, by combining \eqref{e30a}, \eqref{e34f} and \eqref{e30e}, \eqref{e30f}, \eqref{e30i} respectively,
we obtain that, as $t \downarrow 0$,
\begin{align*}
\mathcal{V}_{\beta}(t;R)
&=\frac{\beta R^{2}}{2}-\frac{2R}{\sqrt{\pi}}t^{1/2}+\left(\frac{1}{\pi}+\left(1- \frac{\beta}{\pi}\right)\cot \beta\right)t\\
&+2(4 \pi t)^{-1/2} \int_{0}^{\vert \sin \beta \vert}d\psi \, (\arcsin \psi-\psi) \int_{R}^{\infty} dr \, r^{2}\, e^{-r^{2}\psi^{2}/(4t)}
+O(t e^{-R^{2}(\sin \beta)^{2}/(8t)}).\\
\end{align*}
Via the change of variables $\rho = r\psi$ and integrating by parts with respect to $\psi$, we have
\begin{align}
&2(4 \pi t)^{-1/2} \int_{0}^{\vert \sin \beta \vert}d\psi \, (\arcsin \psi-\psi) \int_{R}^{\infty} dr \, r^{2}\, e^{-r^{2}\psi^{2}/(4t)}\notag\\
&=2(4 \pi t)^{-1/2} \int_{0}^{\vert \sin \beta \vert}d\psi \, \left(\frac{\arcsin \psi-\psi}{\psi^{3}}\right)\int_{R\psi}^{\infty}d\rho \, \rho^{2}
e^{-\rho^{2}/(4t)}\notag\\
&=-(4 \pi t)^{-1/2}R^{3}\int_{0}^{\vert \sin \beta \vert}d\psi \,(\arcsin \psi-\psi)e^{-R^{2}\psi^{2}/(4t)}\notag\\
&\ \ \ +(4 \pi t)^{-1/2}\int_{0}^{\vert \sin \beta \vert}\frac{d\psi}{\psi^{2}} \,\left(\frac{1-\sqrt{1-\psi^{2}}}{\sqrt{1-\psi^{2}}}\right)
\int_{R\psi}^{\infty}d\rho \, \rho^{2}e^{-\rho^{2}/(4t)}+O(t e^{-R^{2}(\sin \beta)^{2}/(8t)})\notag\\
&=-(4 \pi t)^{-1/2}R^{3}\int_{0}^{\vert \sin \beta \vert}d\psi \,(\arcsin \psi-\psi)e^{-R^{2}\psi^{2}/(4t)}\label{e34g}\\
&\ \ \ +(4 \pi t)^{-1/2}\int_{0}^{\vert \sin \beta \vert}d\psi \, \left(\frac{1}{\sqrt{1-\psi^{2}}}-\frac{1}{1+\sqrt{1-\psi^{2}}}\right)
\int_{R\psi}^{\infty}d\rho \, \rho^{2}e^{-\rho^{2}/(4t)}\notag\\
&\ \ \ +O(t e^{-R^{2}(\sin \beta)^{2}/(8t)}),\, t \downarrow 0.\notag
\end{align}
Again integrating by parts with respect to $\psi$, we see that
\begin{align}
&(4 \pi t)^{-1/2}\int_{0}^{\vert \sin \beta \vert}d\psi \, \left(\frac{1}{\sqrt{1-\psi^{2}}}-\frac{1}{1+\sqrt{1-\psi^{2}}}\right)
\int_{R\psi}^{\infty}d\rho \, \rho^{2}e^{-\rho^{2}/(4t)}\notag\\
&=(4 \pi t)^{-1/2}R^{3}\int_{0}^{\vert \sin \beta \vert}d\psi \,(-\psi\sqrt{1-\psi^{2}}+\psi)e^{-R^{2}\psi^{2}/(4t)}
+O(t e^{-R^{2}(\sin \beta)^{2}/(8t)}),\, t \downarrow 0.\label{e34h}
\end{align}
Hence by \eqref{e34g}, \eqref{e34h} and the change of variables $\psi=\frac{x}{R}$, we have that
\begin{align}\label{e38}
&2(4 \pi t)^{-1/2} \int_{0}^{\vert \sin \beta \vert}d\psi \, (\arcsin \psi-\psi) \int_{R}^{\infty} dr \, r^{2}\, e^{-r^{2}\psi^{2}/(4t)}\notag\\
&=-(4 \pi t)^{-1/2}\int_{0}^{R\vert \sin \beta \vert}dx \,\left(R^{2}\arcsin \left(\frac{x}{R}\right)-Rx\right)e^{-x^{2}/(4t)}\notag\\
&\ \ \ +(4 \pi t)^{-1/2}\int_{0}^{R\vert \sin \beta \vert}dx \, \left(-x\sqrt{R^{2}-x^{2}}+Rx\right)
e^{-x^{2}/(4t)} +O(t e^{-R^{2}(\sin \beta)^{2}/(8t)})\notag\\
&=-(4 \pi t)^{-1/2}\int_{0}^{R\vert \sin \beta \vert}dx \,\left(-2Rx + R^{2}\arcsin \left(\frac{x}{R}\right)
+x\sqrt{R^{2}-x^{2}}\right)e^{-x^{2}/(4t)}\\
&\ \ \ +O(t e^{-R^{2}(\sin \beta)^{2}/(8t)}),\, t \downarrow 0.\notag
\end{align}
This completes the proof of Lemma~\ref{L3.1}.
\end{proof}
In Section~\ref{SS6}, we deal with the remaining integral in \eqref{e38}.
\begin{lemma}\label{L3.2}
Let $W_{1}, W_{2}$ be two disjoint wedges in $\R^{2}$ with corresponding angles
$\gamma_{1}, \gamma_{2}$ respectively such that $\partial{W}_{1} \cap \partial{W}_{2}
 = \{V_{i}\}$ for some $i \in \{1, \dots, n\}$. Let $\alpha$ denote the angle between
$W_{1}$ and $W_{2}$ such that $0<\alpha\leq \pi$. Then
\begin{equation*}
\int_{W_{1}} dx \int_{W_{2}} dy \, p(x,y;t)=k(\alpha, \gamma_{1}, \gamma_{2}) t,
\end{equation*}
where $k(\alpha, \gamma_{1}, \gamma_{2})$ is as defined in \eqref{e11b}.
\end{lemma}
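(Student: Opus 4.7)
The strategy is to set up polar coordinates centred at the common vertex $V_{i}$, arranging them so that $W_{1}$ occupies the angular range $[0,\gamma_{1}]$ and $W_{2}$ the range $[\gamma_{1}+\alpha,\gamma_{1}+\alpha+\gamma_{2}]$. Since $|x-y|^{2}=r_{1}^{2}+r_{2}^{2}-2r_{1}r_{2}\cos(\theta_{2}-\theta_{1})$, the integral decouples into
\begin{equation*}
\int_{W_{1}}dx\int_{W_{2}}dy\,p(x,y;t)=\int_{0}^{\gamma_{1}}d\theta_{1}\int_{\gamma_{1}+\alpha}^{\gamma_{1}+\alpha+\gamma_{2}}d\theta_{2}\,\Phi(\theta_{2}-\theta_{1}),
\end{equation*}
where $\Phi(\beta):=(4\pi t)^{-1}\int_{0}^{\infty}\!\int_{0}^{\infty}r_{1}r_{2}\,e^{-(r_{1}^{2}+r_{2}^{2}-2r_{1}r_{2}\cos\beta)/(4t)}\,dr_{1}\,dr_{2}$. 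A parabolic rescaling $(r_{1},r_{2})\mapsto\sqrt{t}(r_{1},r_{2})$ immediately shows that $\Phi(\beta)=t\,\phi(\beta)$ for some $t$-independent function $\phi$, consistent with the claimed form $kt$.

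To evaluate $\phi(\beta)$, I would complete the square $r_{1}^{2}+r_{2}^{2}-2r_{1}r_{2}\cos\beta=(r_{1}-r_{2}\cos\beta)^{2}+r_{2}^{2}\sin^{2}\beta$ and carry out the Gaussian integral in $r_{1}$, which introduces an error function. The remaining integral of the form $\int_{0}^{\infty}r^{2}e^{-r^{2}\sin^{2}\beta}\mathrm{erf}(r\cos\beta)\,dr$ I would evaluate by differentiating under the integral sign with respect to the parameter $c=\cos\beta$ (momentarily treated as independent of $\sin^{2}\beta$); the cancellation $c^{2}+\sin^{2}\beta=1$ collapses the resulting antiderivative to an elementary function. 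Combined with $\arctan(\cos\beta/|\sin\beta|)=\pi/2-\beta$ for $\beta\in(0,\pi)$ and $\beta-3\pi/2$ for $\beta\in(\pi,2\pi)$, this yields
\begin{equation*}
\phi(\beta)=\frac{1-(\beta-\pi)\cot\beta}{\pi\sin^{2}\beta}.
\end{equation*}

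Finally, the substitutions $s=\gamma_{1}-\theta_{1}$, $u=\theta_{2}-\gamma_{1}-\alpha$ transform the angular region to $[0,\gamma_{1}]\times[0,\gamma_{2}]$ with integrand $t\,\phi(\alpha+s+u)$. Setting $F(\beta):=\tfrac{1}{2\pi}(\beta-\pi)\cot\beta$, a direct computation gives $\phi=-F''$; integrating by parts twice then yields
\begin{equation*}
\int_{0}^{\gamma_{1}}\!\int_{0}^{\gamma_{2}}\phi(\alpha+s+u)\,ds\,du=F(\alpha+\gamma_{1})+F(\alpha+\gamma_{2})-F(\alpha)-F(\alpha+\gamma_{1}+\gamma_{2}),
\end{equation*}
which is precisely $k(\alpha,\gamma_{1},\gamma_{2})$ as given in \eqref{e11b}. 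The main delicacy lies in the explicit evaluation of $\phi(\beta)$, in particular selecting the correct branch of $\arctan(\cos\beta/|\sin\beta|)$ across the discontinuity at $\beta=\pi$; the exceptional cases $\alpha=\pi$, $\alpha+\gamma_{1}=\pi$, and so on, where individual cotangents diverge but the total expression remains finite, are handled by the l'H\^opital convention noted after \eqref{e11b}.
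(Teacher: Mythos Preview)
Your argument is correct and follows the same overall architecture as the paper's proof: polar coordinates at $V_i$, separation of the radial from the angular variables, explicit evaluation of the radial integral to obtain a function of $\varphi=\theta_2-\theta_1$ alone, and then two successive angular integrations. The differences are in execution.

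For the radial integral the paper substitutes $\rho=r_2/r_1$ and then $r=r_1^2$, reducing to the rational integral $\int_0^\infty \rho\,(1+\rho^2-2\rho\cos\varphi)^{-2}\,d\rho$, which it evaluates by partial fractions and a trigonometric substitution; the result is expressed through $S(\varphi)=\arctan(|\cot\varphi|)$ and requires a quadrant-by-quadrant case analysis before the primitive $\tfrac12\big[(\pi-\varphi)\cos\varphi/\sin^3\varphi+\cot^2\varphi\big]$ emerges. Your route via the error function and differentiation in the parameter $c=\cos\beta$ (treating it as independent of $\sin^2\beta$, then setting $c^2+\sin^2\beta=1$) reaches the closed form $\phi(\beta)=\big(1-(\beta-\pi)\cot\beta\big)/(\pi\sin^2\beta)$ more directly, with the branch choice for $\arctan(\cos\beta/|\sin\beta|)$ being the only case distinction. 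Your observation that $\phi=-F''$ with $F(\beta)=\tfrac{1}{2\pi}(\beta-\pi)\cot\beta$ is the cleanest part: it turns the double angular integral into a pure telescoping sum $F(\alpha+\gamma_1)+F(\alpha+\gamma_2)-F(\alpha)-F(\alpha+\gamma_1+\gamma_2)$, whereas the paper carries out the two integrations by parts step by step (its equation \eqref{e39g} and the paragraph following). Both approaches land on exactly the same formula; yours is shorter and avoids the quadrant bookkeeping, while the paper's rational-integral method keeps everything real-analytic and avoids the error function altogether.
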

\begin{proof}
By changing coordinates to polar coordinates, as in the proof of Lemma~\ref{L3.1}, we have that
\begin{align}
&\int_{W_{1}} dx \int_{W_{2}} dy \, p(x,y;t)\notag\\
&=(4 \pi t)^{-1} \int_{0}^{\gamma_{1}} d \theta_{1} \int_{\gamma_{1}+\alpha}^{\gamma_{1}+\alpha+\gamma_{2}}
d \theta_{2} \int_{0}^{\infty} dr_{1}\int_{0}^{\infty} dr_{2} (r_{1} r_{2}) \, e^{-(r_{1}^{2} + r_{2}^{2})/(4t)
+ 2r_{1} r_{2} \cos(\theta_{1} - \theta_{2})/(4t)}\notag\\
&=\frac{4t}{\pi}\int_{0}^{\gamma_{1}} d \theta_{1} \int_{\gamma_{1}+\alpha}^{\gamma_{1}+\alpha+\gamma_{2}}
d \theta_{2} \int_{0}^{\infty} dr_{1}\int_{0}^{\infty} dr_{2} (r_{1} r_{2}) \, e^{-(r_{1}^{2} + r_{2}^{2})
+ 2r_{1} r_{2} \cos(\theta_{1} - \theta_{2})}\notag\\
&=\frac{4t}{\pi}\int_{0}^{\gamma_{1}} d \theta_{1} \int_{\gamma_{1}+\alpha}^{\gamma_{1}+\alpha+\gamma_{2}}
d \theta_{2} \int_{0}^{\infty} dr_{1} \, r_{1}^{3} \int_{0}^{\infty} d\rho \, \rho \, e^{-r_{1}^{2}(1+\rho^{2}-
2\rho\cos(\theta_{1} - \theta_{2}))}\notag\\
&=\frac{2t}{\pi}\int_{0}^{\gamma_{1}} d \theta_{1} \int_{\gamma_{1}+\alpha}^{\gamma_{1}+\alpha+\gamma_{2}}
d \theta_{2} \int_{0}^{\infty} d\rho \, \rho \int_{0}^{\infty} dr \, r \, e^{-r(1+\rho^{2}-
2\rho\cos(\theta_{1} - \theta_{2}))}\notag\\
&=\frac{2t}{\pi}\int_{0}^{\gamma_{1}} d \theta_{1} \int_{\gamma_{1}+\alpha}^{\gamma_{1}+\alpha+\gamma_{2}}
d \theta_{2} \int_{0}^{\infty} d\rho \frac{\rho}{(1+\rho^{2}-2\rho\cos(\theta_{1} - \theta_{2}))^{2}}\notag\\
&=\frac{2t}{\pi}\int_{0}^{\gamma_{1}} d \theta_{1} \int_{\gamma_{1}+\alpha-\theta_{1}}^{\gamma_{1}+\alpha+\gamma_{2}-\theta_{1}}
d \varphi \int_{0}^{\infty} d\rho \frac{\rho}{(1+\rho^{2}-2\rho\cos\varphi)^{2}}\notag\\
&=\frac{2t}{\pi}\int_{\alpha}^{\gamma_{1}+\alpha} d \sigma \int_{\sigma}^{\sigma+\gamma_{2}}
d \varphi \int_{0}^{\infty} d\rho \frac{\rho}{(1+\rho^{2}-2\rho\cos\varphi)^{2}}\notag\\
&=\frac{2t}{\pi}\int_{\alpha}^{\gamma_{1}+\alpha} d \sigma \int_{\sigma}^{\sigma+\gamma_{2}}
d \varphi \int_{0}^{\infty} d\rho \left(\frac{\rho-\cos\varphi}{(1+\rho^{2}-2\rho\cos\varphi)^{2}}
+\frac{\cos\varphi}{(1+\rho^{2}-2\rho\cos\varphi)^{2}}\right)\notag\\
&=\frac{\gamma_{1}\gamma_{2}}{\pi}t + \frac{2t}{\pi}\int_{\alpha}^{\gamma_{1}+\alpha} d \sigma
\int_{\sigma}^{\sigma+\gamma_{2}}d \varphi \int_{-\cos\varphi}^{\infty} d\rho \frac{\cos\varphi}{(\rho^{2}+(\sin\varphi)^{2})^{2}}\notag\\
&=\frac{\gamma_{1}\gamma_{2}}{\pi}t + \frac{2t}{\pi}\int_{\alpha}^{\gamma_{1}+\alpha} d \sigma
\int_{\sigma}^{\sigma+\gamma_{2}}d \varphi \frac{\cos\varphi}{\vert \sin\varphi \vert^{3}}\int_{0}^{\infty}
\frac{d\rho}{(\rho^{2}+1)^{2}}\notag\\
&\ \ \ +\frac{2t}{\pi}\int_{\alpha}^{\gamma_{1}+\alpha} d \sigma \int_{\sigma}^{\sigma+\gamma_{2}}d \varphi
\frac{\vert \cos\varphi \vert}{\vert \sin\varphi \vert^{3}}\int_{0}^{\vert \cot \varphi \vert} \frac{d\rho}{(\rho^{2}+1)^{2}}\notag\\
&=\frac{\gamma_{1}\gamma_{2}}{\pi}t + \frac{2t}{\pi}\int_{\alpha}^{\gamma_{1}+\alpha} d \sigma
\int_{\sigma}^{\sigma+\gamma_{2}}d \varphi \frac{\cos\varphi}{\vert \sin\varphi \vert^{3}}\int_{0}^{\frac{\pi}{2}}
d\theta \, (\cos\theta)^{2}\notag\\
&\ \ \ +\frac{2t}{\pi}\int_{\alpha}^{\gamma_{1}+\alpha} d \sigma \int_{\sigma}^{\sigma+\gamma_{2}}d \varphi
\frac{\vert \cos\varphi \vert}{\vert \sin\varphi \vert^{3}}\int_{0}^{\arctan(\vert \cot \varphi \vert)}
d\theta \, (\cos\theta)^{2}\notag\\
&=\frac{\gamma_{1}\gamma_{2}}{\pi}t + \frac{2t}{\pi}\int_{\alpha}^{\gamma_{1}+\alpha} d \sigma
\int_{\sigma}^{\sigma+\gamma_{2}}d \varphi \, \frac{\pi}{4}\frac{\cos\varphi}{\vert \sin\varphi \vert^{3}}\label{e39a}\\
&\ \ \ + \frac{2t}{\pi}\int_{\alpha}^{\gamma_{1}+\alpha} d \sigma \int_{\sigma}^{\sigma+\gamma_{2}}d \varphi
\left(\frac{\sin(2\arctan(\vert \cot \varphi \vert))}{4}+ \frac{\arctan(\vert \cot \varphi \vert)}{2}\right)
\frac{\vert \cos\varphi \vert}{\vert \sin\varphi \vert^{3}}.\notag
\end{align}
We see that
\begin{equation*}%\label{e39b}
S(\varphi) := \arctan(\vert \cot \varphi \vert)
=\begin{cases}
\vert \varphi - \frac{\pi}{2} \vert, &\text{if $0<\varphi\leq \pi$;}\\
\vert \varphi - \frac{3\pi}{2} \vert, &\text{if $\pi<\varphi\leq 2\pi$.}
\end{cases}
\end{equation*}
Hence
\begin{equation*}%\label{e39c}
\frac{\sin(2\arctan(\vert \cot \varphi \vert))}{4}\frac{\vert \cos\varphi \vert}{\vert \sin\varphi \vert^{3}}
=\frac{1}{2}\left(\frac{\cos\varphi}{\sin\varphi}\right)^{2}.
\end{equation*}
Therefore \eqref{e39a} becomes
\begin{equation}\label{e39d}
\frac{\gamma_{1}\gamma_{2}}{\pi}t + \frac{2t}{\pi}\int_{\alpha}^{\gamma_{1}+\alpha} d \sigma
\int_{\sigma}^{\sigma+\gamma_{2}}d \varphi \left(\frac{\pi}{4}\frac{\cos\varphi}{\vert \sin\varphi \vert^{3}}
+\frac{1}{2}\left(\frac{\cos\varphi}{\sin\varphi}\right)^{2}+\frac{S(\varphi)\vert \cos\varphi \vert}{2\vert \sin\varphi \vert^{3}}\right).
\end{equation}
By considering the function
\begin{equation*}%\label{e39e}
f(\varphi)=\frac{\pi}{4}\frac{\cos\varphi}{\vert \sin\varphi \vert^{3}}
+\frac{1}{2}\left(\frac{\cos\varphi}{\sin\varphi}\right)^{2}+\frac{S(\varphi)\vert \cos\varphi \vert}{2\vert \sin\varphi \vert^{3}}
\end{equation*}
for $\varphi$ in each of the four quadrants, we have that
\begin{equation*}%\label{e39f}
\frac{1}{2}\int_{\sigma}^{\sigma+\gamma_{2}}d\varphi \left((\pi - \varphi)
\frac{\cos\varphi}{(\sin\varphi)^{3}}+\left(\frac{\cos\varphi}{\sin\varphi}\right)^{2}\right)
\end{equation*}
is a primitive for $f(\varphi)$ for all $\varphi \in (0,2\pi)$.
Integrating by parts with respect to $\varphi$, we obtain
\begin{align}
&\frac{1}{2}\int_{\sigma}^{\sigma+\gamma_{2}}d\varphi \left((\pi - \varphi)
\frac{\cos\varphi}{(\sin\varphi)^{3}}+\left(\frac{\cos\varphi}{\sin\varphi}\right)^{2}\right)\notag\\
&=\frac{1}{4}\left(\frac{(\gamma_{2}+\sigma-\pi)}{(\sin(\gamma_{2}+\sigma))^{2}}
-\cot(\gamma_{2}+\sigma) - \frac{(\sigma-\pi)}{(\sin\sigma)^{2}}+\cot\sigma - 2\gamma_{2}\right).\label{e39g}
\end{align}
Hence, by \eqref{e39d}, \eqref{e39g} and integrating by parts with respect to $\sigma$, we have that
\begin{align*}
&\int_{W_{1}} dx \int_{W_{2}} dy \, p(x,y;t)\\
&=\frac{t}{2\pi}\left(-(\gamma_{2}+\gamma_{1}+\alpha-\pi)\cot(\gamma_{2}+\gamma_{1}+\alpha)-(\alpha-\pi)\cot\alpha\right)\\
&\ \ \ +\frac{t}{2\pi}\left((\gamma_{2}+\alpha-\pi)\cot(\gamma_{2}+\alpha)+(\gamma_{1}+\alpha-\pi)\cot(\gamma_{1}+\alpha)\right)
\end{align*}
as required.
\end{proof}

\section{The contribution to the heat content from points close to an edge.}\label{S4}
In this section, we consider points $x \in C(\frac{R}{2}\vert \sin \gamma \vert, R)$.
We partition this region into $n$ rectangles $S_{\gamma}$ and $2n$ cusps $C_{i}$,
each of height $\frac{R}{2}\vert \sin \gamma \vert$. We approximate $u_{D}$ by $u_{H}$,
where $H \subset \R^{2}$ is the half-plane such that $\emptyset \neq \partial S_{\gamma} \cap \partial D \subset \partial H$
and $S_{\gamma} \subset H$, as in Lemma~\ref{L3}.

\subsection{The contribution to the heat content from a rectangle.}\label{SS4}
We first compute the contribution to the heat content of $D$ from a rectangle,
$S_{\gamma}$, of height $\frac{R}{2}\vert \sin \gamma \vert$ and length $L$, where $L \in \R, L>0$.
We have that
\begin{align*}
\int_{H}dy \, p(x,y;t)
&= \int_{-\infty}^{\infty}dy_{2} \int_{0}^{\infty}dy_{1} (4\pi t)^{-1}
e^{-(x_{1}-y_{1})^{2}/(4t)-(x_{2}-y_{2})^{2}/(4t)}\\
&=1-\int_{x_{1}}^{\infty}d\zeta \, (4\pi t)^{-1/2}e^{-\zeta^{2}/(4t)}.\\
\end{align*}
Let $x_{1} \in (0,\frac{R}{2}\vert \sin \gamma \vert)$ and $x_{2}\in (0,L)$. Then,
by integrating by parts with respect to $x_{1}$, we obtain
\begin{align}
&\int_{S_{\gamma}}dx \int_{H}dy \, p(x,y;t)\notag\\
&=\vert S_{\gamma} \vert - \int_{S_{\gamma}}dx \int_{x_{1}}^{\infty}d\zeta \, (4\pi t)^{-1/2}e^{-\zeta^{2}/(4t)}\notag\\
&=\vert S_{\gamma} \vert-(4\pi t)^{-1/2}\int_{0}^{L}dx_{2} \int_{0}^{\frac{R}{2}\vert \sin \gamma \vert}dx_{1}
\int_{x_{1}}^{\infty}d\zeta \, e^{-\zeta^{2}/(4t)}\notag\\
&=\vert S_{\gamma} \vert-(4\pi t)^{-1/2}\int_{0}^{L}dx_{2} \int_{0}^{\infty}dx_{1}
\int_{x_{1}}^{\infty}d\zeta \, e^{-\zeta^{2}/(4t)}+O(t^{1/2} e^{-R^{2}(\sin \gamma)^{2}/(32t)})\notag\\
&=\vert S_{\gamma} \vert - \frac{L}{\sqrt{\pi}}t^{1/2}
+O(t^{1/2} e^{-R^{2}(\sin \gamma)^{2}/(32t)}), \, t \downarrow 0.\label{e40}
\end{align}

\subsection{The contribution to the heat content from a cusp.}\label{SS5}
We now compute the contribution to the heat content of $D$ from a cusp.
Let $C_{i}$ denote the cusp which is adjacent to $S_{\gamma}$ and $B_{i}(R)$. Then
\begin{align*}
\int_{C_{i}}dx \int_{H} dy \, p(x,y;t)
&=\int_{0}^{\frac{R}{2}\vert \sin \gamma \vert} dx \, (R-\sqrt{R^{2}-x^{2}})
\int_{0}^{\infty}dy\, (4\pi t)^{-1/2}e^{-\vert x-y \vert^{2}/(4t)}\\
&=\int_{0}^{\frac{R}{2}\vert \sin \gamma \vert} dx \, (R-\sqrt{R^{2}-x^{2}})
\left(1-\int_{x}^{\infty}d\zeta \, (4\pi t)^{-1/2} e^{-\zeta^{2}/(4t)}\right)\\
&=\vert C_{i} \vert - \int_{0}^{\frac{R}{2}\vert \sin \gamma \vert} dx \, (R-\sqrt{R^{2}-x^{2}})
\int_{x}^{\infty}d\zeta \, (4\pi t)^{-1/2} e^{-\zeta^{2}/(4t)}.\\
\end{align*}
Integrating by parts with respect to $x$, we obtain
\begin{align*}
&- \int_{0}^{\frac{R}{2}\vert \sin \gamma \vert} dx \, (R-\sqrt{R^{2}-x^{2}})
\int_{x}^{\infty}d\zeta \, (4\pi t)^{-1/2} e^{-\zeta^{2}/(4t)}\\
&=-(4\pi t)^{-1/2}\int_{0}^{\frac{R}{2}\vert \sin \gamma \vert}dx \, \left(Rx-\frac{{R}^{2}}{2}\arcsin \left(\frac{x}{R}\right)
 - \frac{x}{2}\sqrt{R^{2}-x^{2}}\right)e^{-x^{2}/(4t)}\\
&\ \ \ +O(e^{-R^{2}(\sin \gamma)^{2}/(32t)}), \, t \downarrow 0.\\
\end{align*}
Hence the contribution from each cusp is
\begin{align*}
&\int_{C_{i}}dx \int_{H} dy \, p(x,y;t)\\
&=\vert C_{i} \vert + (4\pi t)^{-1/2}\int_{0}^{\frac{R}{2}\vert \sin \gamma \vert}dx \, \left(-Rx+\frac{{R}^{2}}{2}\arcsin \left(\frac{x}{R}\right)
 +\frac{x}{2}\sqrt{R^{2}-x^{2}}\right)e^{-x^{2}/(4t)}\\
&\ \ \ +O(e^{-R^{2}(\sin \gamma)^{2}/(32t)}),\, t \downarrow 0.
\end{align*}

\subsection{The $O(t^{3/2})$ terms from the cusp and sector contributions.}\label{SS6}
Finally, we deal with the remaining integrals from the sector and cusp contributions.
Each sector has two neighbouring cusps so we are interested in the following integral
\begin{align}
&2(4\pi t)^{-1/2}\int_{0}^{\frac{R}{2}\vert \sin \gamma \vert}dx \, \left(-Rx+\frac{{R}^{2}}{2}\arcsin \left(\frac{x}{R}\right)
 +\frac{x}{2}\sqrt{R^{2}-x^{2}}\right)e^{-x^{2}/(4t)} \notag\\
&=(4\pi t)^{-1/2}\int_{0}^{\frac{R}{2}\vert \sin \gamma \vert}dx \, \left(-2Rx+{R}^{2}\arcsin \left(\frac{x}{R}\right)
 +x\sqrt{R^{2}-x^{2}}\right)e^{-x^{2}/(4t)}.\label{e50}
\end{align}
By definition of $\gamma$, in \eqref{e7}, we can write the remaining integral from the sector contribution,
\eqref{e38}, as
\begin{align}
&-(4\pi t)^{-1/2}\int_{0}^{R\vert \sin \beta \vert}dx \, \left(-2Rx+{R}^{2}\arcsin \left(\frac{x}{R}\right)
 +x\sqrt{R^{2}-x^{2}}\right)e^{-x^{2}/(4t)}\notag\\
&=-(4\pi t)^{-1/2}\int_{0}^{R\vert \sin \gamma \vert}dx \, \left(-2Rx+{R}^{2}\arcsin \left(\frac{x}{R}\right)
 +x\sqrt{R^{2}-x^{2}}\right)e^{-x^{2}/(4t)}\label{e51}\\
&\ \ \ +O(e^{-R^{2}(\sin \gamma)^{2}/(8t)}),\, t \downarrow 0.\notag
\end{align}
Adding \eqref{e50} and \eqref{e51}, we obtain
\begin{align*}
&(4\pi t)^{-1/2}\int_{0}^{\frac{R}{2}\vert \sin \gamma \vert}dx \, \left(-2Rx+{R}^{2}\arcsin \left(\frac{x}{R}\right)
 +x\sqrt{R^{2}-x^{2}}\right)e^{-x^{2}/(4t)}\\
&-(4\pi t)^{-1/2}\int_{0}^{R\vert \sin \gamma \vert}dx \, \left(-2Rx+{R}^{2}\arcsin \left(\frac{x}{R}\right)
 +x\sqrt{R^{2}-x^{2}}\right)e^{-x^{2}/(4t)}\\
&=-(4\pi t)^{-1/2}\int_{\frac{R}{2}\vert \sin \gamma \vert}^{R\vert \sin \gamma \vert}dx \, \left(-2Rx+{R}^{2}\arcsin \left(\frac{x}{R}\right)
 +x\sqrt{R^{2}-x^{2}}\right)e^{-x^{2}/(4t)}\\
&=O(e^{-R^{2}(\sin \gamma)^{2}/(32t)}),\, t \downarrow 0.\\
\end{align*}
This completes the proof of Theorem~\ref{T1}.

\section{The heat content of a $\pi$-sector in a $\frac{3\pi}{2}$-wedge.}\label{S6}
In this section, we compute the heat content of a $\pi$-sector in a $\frac{3\pi}{2}$-wedge which share one common edge (and vertex).
This is a crucial ingredient in the computation of the heat content of the fractal polyhedron
$D_{s}$, which was constructed in Section~\ref{S1}, and will be used in Section~\ref{S7}.
\begin{lemma}\label{L6.1}
Let $B_{\pi}(R) \subset W_{\frac{3\pi}{2}}$ such that $B_{\pi}(R)$ and $W_{\frac{3\pi}{2}}$ share one common
edge (and vertex). Then, for $t \downarrow 0$,
\begin{align}
&\int_{B_{\pi}(R)} dx \int_{W_{\frac{3\pi}{2}}} dy \, p(x,y;t) \label{e60}\\
&=\frac{\pi R^{2}}{2} -\frac{R}{\sqrt{\pi}}t^{1/2} +(4 \pi t)^{-1/2}\int_{0}^{1}d\psi \, (\arcsin \psi - \psi)
\int_{R}^{\infty} dr \, r^{2}e^{- r^{2}\psi^{2}/(4t)}\notag\\
&\ \ \ + O(te^{-R^{2}/(8t)}).\notag
\end{align}
\end{lemma}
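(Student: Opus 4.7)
The plan is to mirror the polar-coordinate computation carried out in the proof of Lemma~\ref{L3.1}, adapted to the present asymmetric geometry in which the angular domain of $x$ (size $\pi$) differs from that of $y$ (size $\frac{3\pi}{2}$), and only one of the two radial edges of $B_\pi(R)$ lies on $\partial W_{\frac{3\pi}{2}}$. Choose coordinates so that the common edge coincides with the positive $x$-axis; then $x=(r_1\cos\theta_1,r_1\sin\theta_1)$ with $(r_1,\theta_1)\in[0,R]\times[0,\pi]$, while $y=(r_2\cos\theta_2,r_2\sin\theta_2)$ with $(r_2,\theta_2)\in[0,\infty)\times[0,\frac{3\pi}{2}]$.

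After the substitution $\rho=r_2-r_1\cos(\theta_1-\theta_2)$, the integral decomposes as $I_1+I_2$ exactly as at the beginning of the proof of Lemma~\ref{L3.1}. The piece $I_1$ is computed directly from an elementary $\rho$-integral and gives $I_1=\frac{3\pi}{2}t(1-e^{-R^2/(4t)})$. Decomposing $I_2=I_3+I_4+I_5$ as in the template, I reduce the joint angular integration in $I_3+I_4$ to a single variable $\eta=\theta_1-\theta_2$ equipped with the piecewise-linear weight $w(\eta)$ arising from the length of the slice $\{(\theta_1,\theta_2)\in[0,\pi]\times[0,\frac{3\pi}{2}]:\theta_1-\theta_2=\eta\}$. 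Substituting $\psi=|\sin\eta|$, splitting by the sign of $\cos\eta$, and extending the $r$-integrals from $[0,R]$ to $[0,\infty)$ (at the cost of the standard $O(te^{-R^2\sin^2\eta/(8t)})$ errors) then produces the area contribution $\frac{\pi R^2}{2}$, the surface contribution $-\frac{R}{\sqrt{\pi}}t^{1/2}$, a residual explicit $t$-coefficient, and the tail integral appearing in \eqref{e60}; the upper limit $1$ in the $\psi$-integral arises because $|\sin\eta|$ attains $1$ on the relevant angular range (in contrast with Lemma~\ref{L3.1}, where the upper limit is $|\sin\beta|$).

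Finally, I evaluate $I_5$ via the substitution $\rho=r\rho'$ and the antiderivative identity \eqref{e33a}, tracking the locally constant shift $U(\sigma)=\arctan(\tan\sigma)-\sigma$ over the enlarged range $\sigma\in[0,\frac{3\pi}{2}]$; this introduces case distinctions at $\sigma=\frac{\pi}{2}$ and $\sigma=\frac{3\pi}{2}$ analogous to those in the proof of Lemma~\ref{L3.1}. Summing $I_1$, $I_3+I_4$ and $I_5$ and bounding the exponential remainders uniformly by $O(te^{-R^2/(8t)})$ then yields the statement.

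The main obstacle is the $t$-coefficient cancellation. The coefficient $-\frac{R}{\sqrt{\pi}}$ (half of the Lemma~\ref{L3.1} value) reflects that only one edge of $B_\pi(R)$ lies on $\partial W_{\frac{3\pi}{2}}$, while the absence of any explicit $O(t)$ contribution in \eqref{e60} is a non-trivial arithmetic identity: the $\frac{3\pi}{2}t$ from $I_1$, the residual $t$-piece from $I_3+I_4$, and the $t$-contribution from $I_5$ must cancel exactly. Since the symmetry $\theta_1\leftrightarrow\theta_2$ that collapsed the work in Lemma~\ref{L3.1} is unavailable on the asymmetric domain $[0,\pi]\times[0,\frac{3\pi}{2}]$, this cancellation demands careful separate evaluation of each angular integral with the correct piecewise weight $w(\eta)$ and the correct branches of $U(\sigma)$.
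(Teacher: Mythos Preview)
Your approach is correct and will work, but it takes a harder road than the paper's. The paper's key simplification is to write
\[
\int_{0}^{3\pi/2}d\theta_2=\int_{0}^{2\pi}d\theta_2-\int_{3\pi/2}^{2\pi}d\theta_2,
\]
so that the integral splits as $M_1+M_2$. The piece $M_1$ is the integral of $p(x,y;t)$ over $y\in\R^2$ followed by $x\in B_\pi(R)$, which is simply the area $\frac{\pi R^2}{2}$; this extracts the leading term in one line. Only the complementary piece $M_2$ (with $\theta_2\in[-\frac{\pi}{2},0]$) needs the $\rho=r_2-r_1A$ machinery, and there the angular domain is the rectangle $[0,\pi]\times[-\frac{\pi}{2},0]$, on which $\theta_1-\theta_2\in(0,\frac{3\pi}{2})$ and only two branches of $U$ occur.

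By contrast, you work directly on the asymmetric rectangle $[0,\pi]\times[0,\frac{3\pi}{2}]$, so $\eta=\theta_1-\theta_2$ ranges over $[-\frac{3\pi}{2},\pi]$ and the slice weight $w(\eta)$ is a genuine trapezoid with three linear pieces. Your $I_1=\frac{3\pi}{2}\,t(1-e^{-R^2/(4t)})$ is correct, and the $I_3+I_4$ and $I_5$ computations go through as you describe, but the ``non-trivial arithmetic identity'' you flag for the vanishing of the $t$-coefficient is now a three-way cancellation spread across several piecewise integrals. The paper's decomposition sidesteps this: once $M_1$ absorbs the area, the $t$-coefficient of $M_2$ is assembled from $-\frac{\pi}{2}t$ (from the elementary $\rho$-piece), $\frac{\pi}{4}t$ (from $N_1$), and $\frac{\pi}{4}t$ (from $N_2$), and the zero is visible. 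What your route buys is uniformity with Lemma~\ref{L3.1}; what the paper's buys is a much shorter computation and a transparent reason for the $t$-cancellation.
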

We remark that the coefficient of $t$ is equal to $0$ in this case.

\begin{proof} Similarly to the proof of Lemma \ref{L3.1}, the left-hand side of \eqref{e60} equals
\begin{align}
&(4 \pi t)^{-1} \int_{0}^{\pi} d \theta_{1} \int_{0}^{\frac{3\pi}{2}} d \theta_{2} \int_{0}^{R} dr_{1}
\int_{0}^{\infty} dr_{2} (r_{1} r_{2}) e^{-(r_{1}^{2} + r_{2}^{2})/(4t) + 2r_{1} r_{2}A/(4t)}\notag\\
&=(4 \pi t)^{-1} \int_{0}^{\pi} d \theta_{1} \int_{0}^{2\pi} d \theta_{2} \int_{0}^{R} dr_{1}
\int_{0}^{\infty} dr_{2} (r_{1} r_{2}) e^{-(r_{1}^{2} + r_{2}^{2})/(4t) + 2r_{1} r_{2}A/(4t)}\notag\\
&\ \ \ -(4 \pi t)^{-1} \int_{0}^{\pi} d \theta_{1} \int_{\frac{3\pi}{2}}^{2\pi} d \theta_{2} \int_{0}^{R} dr_{1}
\int_{0}^{\infty} dr_{2} (r_{1} r_{2}) e^{-(r_{1}^{2} + r_{2}^{2})/(4t) + 2r_{1} r_{2}A/(4t)}\notag\\
&=:M_{1} + M_{2}.\label{e61}
\end{align}
Now
\begin{align}
M_{1}&=(4 \pi t)^{-1} \int_{0}^{\pi} d \theta_{1} \int_{0}^{2\pi} d \theta_{2} \int_{0}^{R} dr_{1}
\int_{0}^{\infty} dr_{2} (r_{1} r_{2}) e^{-(r_{1}^{2} + r_{2}^{2})/(4t) + 2r_{1} r_{2}A/(4t)}\notag\\
&= \frac{\pi R^{2}}{2},\label{e62}
\end{align}
and letting  $r_{2} - r_{1} A = \rho$, we have that
\begin{align}
M_{2}&=-(4 \pi t)^{-1} \int_{0}^{\pi} d \theta_{1} \int_{\frac{3\pi}{2}}^{2\pi} d \theta_{2} \int_{0}^{R} dr_{1}
\int_{0}^{\infty} dr_{2} (r_{1} r_{2}) e^{-(r_{1}^{2} + r_{2}^{2})/(4t) + 2r_{1} r_{2}A/(4t)}\notag\\
&=-(4 \pi t)^{-1} \int_{0}^{\pi} d \theta_{1} \int_{-\frac{\pi}{2}}^{0} d \theta_{2} \int_{0}^{R} dr_{1}
\int_{0}^{\infty} dr_{2} (r_{1} r_{2}) e^{-(r_{1}^{2} + r_{2}^{2})/(4t) + 2r_{1} r_{2}A/(4t)}\notag\\
&=-(4 \pi t)^{-1} \int_{0}^{\pi} d \theta_{1} \int_{-\frac{\pi}{2}}^{0} d \theta_{2} \int_{0}^{R} r \, dr
\int_{-Ar}^{\infty} d \rho \, (\rho + Ar) e^{-\rho^{2}/(4t) - r^{2}(1-A^{2})/(4t)}\notag\\
&=-\frac{\pi}{2}t(1-e^{-R^{2}/(4t)})\notag\\
&\ \ \ -(4 \pi t)^{-1} \int_{0}^{\pi} d \theta_{1} \int_{-\frac{\pi}{2}}^{0} d \theta_{2} \int_{0}^{R} dr
\int_{-Ar}^{\infty} d \rho \, Ar^{2} e^{-\rho^{2}/(4t) - r^{2}(1-A^{2})/(4t)}.\label{e63}
\end{align}
We also have that
\begin{align}
&-(4 \pi t)^{-1} \int_{0}^{\pi} d \theta_{1} \int_{-\frac{\pi}{2}}^{0} d \theta_{2} \int_{0}^{R} dr
\int_{-Ar}^{\infty} d \rho \, Ar^{2} e^{-\rho^{2}/(4t) - r^{2}(1-A^{2})/(4t)}\notag\\
&=-(4 \pi t)^{-1} \int_{0}^{\pi} d \theta_{1} \int_{-\frac{\pi}{2}}^{0} d \theta_{2} \int_{0}^{R} dr
\int_{0}^{\infty} d \rho \, Ar^{2} e^{-\rho^{2}/(4t) - r^{2}(1-A^{2})/(4t)}\notag\\
&\ \ \ -(4 \pi t)^{-1} \int_{0}^{\pi} d \theta_{1} \int_{-\frac{\pi}{2}}^{0} d \theta_{2} \int_{0}^{R} dr
\int_{0}^{r} d \rho \, A^{2}r^{2} e^{-A^{2}\rho^{2}/(4t) - r^{2}(1-A^{2})/(4t)}\notag
\end{align}
\begin{align}
&=-\frac{(4 \pi t)^{-1/2}}{2} \int_{0}^{\pi} d \theta_{1} \int_{-\frac{\pi}{2}}^{0} d \theta_{2} \int_{0}^{R} dr
\, Ar^{2} e^{- r^{2}(1-A^{2})/(4t)}\notag\\
&\ \ \ -(4 \pi t)^{-1} \int_{0}^{\pi} d \theta_{1} \int_{-\frac{\pi}{2}}^{0} d \theta_{2} \int_{0}^{R} dr
\int_{0}^{\infty} d \rho \, A^{2}r^{2} e^{-A^{2}\rho^{2}/(4t) - r^{2}(1-A^{2})/(4t)}\notag\\
&\ \ \ +(4 \pi t)^{-1} \int_{0}^{\pi} d \theta_{1} \int_{-\frac{\pi}{2}}^{0} d \theta_{2} \int_{0}^{R} dr
\int_{r}^{\infty} d \rho \, A^{2}r^{2} e^{-A^{2}\rho^{2}/(4t) - r^{2}(1-A^{2})/(4t)}\notag\\
&=-\frac{(4 \pi t)^{-1/2}}{2} \int_{0}^{\pi} d \theta_{1} \int_{-\frac{\pi}{2}}^{0} d \theta_{2} \int_{0}^{R} dr
\, (A + \vert A \vert) r^{2} e^{- r^{2}(1-A^{2})/(4t)}\notag\\
&\ \ \ +(4 \pi t)^{-1} \int_{0}^{\pi} d \theta_{1} \int_{-\frac{\pi}{2}}^{0} d \theta_{2} \int_{0}^{R} dr
\int_{r}^{\infty} d \rho \, A^{2}r^{2} e^{-A^{2}\rho^{2}/(4t) - r^{2}(1-A^{2})/(4t)}\notag\\
&=:N_{1} + N_{2}.\label{e64}
\end{align}
Now $N_1$ equals
\begin{align}
&-\frac{(4 \pi t)^{-1/2}}{2} \int_{0}^{\pi} d \theta_{1} \int_{-\frac{\pi}{2}}^{0} d \theta_{2} \int_{0}^{R} dr
\, (\cos(\theta_{1}-\theta_{2}) + \vert \cos(\theta_{1}-\theta_{2}) \vert) r^{2} e^{- r^{2}(\sin(\theta_{1}-\theta_{2}))^{2}/(4t)}\notag\\
&=-\frac{(4 \pi t)^{-1/2}}{2} \int_{0}^{\pi} d \theta_{1} \int_{0}^{\frac{\pi}{2}} d \theta_{2} \int_{0}^{R} dr
\, (\cos(\theta_{1}+\theta_{2}) + \vert \cos(\theta_{1}+\theta_{2}) \vert) r^{2} e^{- r^{2}(\sin(\theta_{1}+\theta_{2}))^{2}/(4t)}\notag\\
&=-\frac{(4 \pi t)^{-1/2}}{2} \int_{0}^{\pi} d \theta_{1} \int_{\theta_{1}}^{\frac{\pi}{2}+\theta_{1}} d \eta \int_{0}^{R} dr
\, (\cos\eta + \vert \cos\eta \vert) r^{2} e^{- r^{2}(\sin\eta)^{2}/(4t)}\notag\\
&=-(4 \pi t)^{-1/2} \int_{0}^{\frac{\pi}{2}} d \theta_{1} \int_{\theta_{1}}^{\frac{\pi}{2}} d \eta \int_{0}^{R} dr
\, r^{2} \cos\eta \, e^{- r^{2}(\sin\eta)^{2}/(4t)}\notag\\
&\ \ \ -\frac{(4 \pi t)^{-1/2}}{2} \int_{0}^{\frac{\pi}{2}} d \theta_{1} \int_{\frac{\pi}{2}}^{\frac{\pi}{2}+\theta_{1}} d \eta \int_{0}^{R} dr
\, (\cos\eta + \vert \cos\eta \vert) r^{2} e^{- r^{2}(\sin\eta)^{2}/(4t)}\notag\\
&\ \ \ -\frac{(4 \pi t)^{-1/2}}{2} \int_{\frac{\pi}{2}}^{\pi} d \theta_{1} \int_{\theta_{1}}^{\frac{\pi}{2}+\theta_{1}} d \eta \int_{0}^{R} dr
\, (\cos\eta + \vert \cos\eta \vert) r^{2} e^{- r^{2}(\sin\eta)^{2}/(4t)}\notag\\
&=-(4 \pi t)^{-1/2} \int_{0}^{\frac{\pi}{2}} d \theta \int_{\theta}^{\frac{\pi}{2}} d \eta \int_{0}^{R} dr
\, r^{2} \cos\eta \, e^{- r^{2}(\sin\eta)^{2}/(4t)}\notag\\
&=-\frac{R}{\sqrt{\pi}}t^{1/2} + \frac{\pi}{4}t +(4 \pi t)^{-1/2}\int_{0}^{1}d\psi \, (\arcsin \psi - \psi)
\int_{R}^{\infty} dr \, r^{2}e^{- r^{2}\psi^{2}/(4t)} \label{e65}\\
&\ \ \ + O(te^{-R^{2}/(8t)}),\, t \downarrow 0,\notag
\end{align}
by integrating by parts with respect to $\theta$. In addition, as for the computation of $I_{5}$ (see \eqref{e31a}),
we have that
\begin{align*}
N_{2}&=(4 \pi t)^{-1} \int_{0}^{\pi} d \theta_{1} \int_{-\frac{\pi}{2}}^{0} d \theta_{2} \int_{0}^{R} dr
\int_{r}^{\infty} d \rho \, A^{2}r^{2} e^{-A^{2}\rho^{2}/(4t) - r^{2}(1-A^{2})/(4t)}\notag\\
&=\frac{2t}{\pi}\int_{0}^{\pi} d \theta_{1} \int_{-\frac{\pi}{2}}^{0} d \theta_{2} \int_{1}^{\infty} d \rho
\frac{A^{2}}{(A^{2}\rho^{2}+1-A^{2})^{2}} + O(te^{-R^{2}/(4t)})\notag\\
&=\frac{t}{\pi}\int_{0}^{\pi} d \theta_{1} \int_{-\frac{\pi}{2}}^{0} d \theta_{2}
\left(-(\cot(\theta_{1}-\theta_{2}))^{2} + \frac{\cos(\theta_{1}-\theta_{2})}{(\sin(\theta_{1}-\theta_{2}))^{3}}
\arctan (\tan (\theta_{1}-\theta_{2}))\right) + O(te^{-R^{2}/(4t)})\notag\\
&=\frac{t}{\pi}\int_{0}^{\pi} d \theta_{1} \int_{0}^{\frac{\pi}{2}} d \theta_{2}
\left(-(\cot(\theta_{1}+\theta_{2}))^{2} + \frac{\cos(\theta_{1}+\theta_{2})}{(\sin(\theta_{1}+\theta_{2}))^{3}}
\arctan (\tan (\theta_{1}+\theta_{2}))\right) + O(te^{-R^{2}/(4t)})\notag\\
&=\frac{t}{\pi}\int_{0}^{\pi} d \theta_{1} \int_{\theta_{1}}^{\frac{\pi}{2}+\theta_{1}} d \eta
\left(-(\cot\eta)^{2} + \frac{\cos\eta}{(\sin\eta)^{3}}\arctan (\tan \eta)\right) + O(te^{-R^{2}/(4t)})\notag\\
&=\frac{t}{\pi}\int_{0}^{\pi} d \theta \int_{\theta}^{\frac{\pi}{2}+\theta} d \eta
\left(-(\cot\eta)^{2} + \frac{\cos\eta}{(\sin\eta)^{3}}\arctan (\tan \eta)\right) + O(te^{-R^{2}/(4t)})
,\, t \downarrow 0.%\label{e66}
\end{align*}
Note that
\begin{equation*}%\label{e67}
\arctan(\tan\eta) =
\begin{cases}
\eta & \text{if $\eta \in (0,\frac{\pi}{2})$};\\
\eta - \pi & \text{if $\eta \in (\frac{\pi}{2},\frac{3\pi}{2})$}.
\end{cases}
\end{equation*}
Hence
\begin{align}
N_{2}
&=\frac{t}{\pi}\int_{0}^{\pi} d \theta \int_{\theta}^{\frac{\pi}{2}+\theta} d \eta
\left(-(\cot\eta)^{2} + \frac{\cos\eta}{(\sin\eta)^{3}}\arctan (\tan \eta)\right) + O(te^{-R^{2}/(4t)})\notag\\
&=\frac{t}{\pi}\int_{0}^{\frac{\pi}{2}} d \theta \int_{\theta}^{\frac{\pi}{2}} d \eta
\left(-(\cot\eta)^{2} + \frac{\eta\cos\eta}{(\sin\eta)^{3}}\right)\notag\\
&\ \ \ +\frac{t}{\pi}\int_{0}^{\frac{\pi}{2}} d \theta \int_{\frac{\pi}{2}}^{\frac{\pi}{2}+\theta} d \eta
\left(-(\cot\eta)^{2} + \frac{(\eta - \pi)\cos\eta}{(\sin\eta)^{3}}\right)\notag\\
&\ \ \ +\frac{t}{\pi}\int_{\frac{\pi}{2}}^{\pi} d \theta \int_{\theta}^{\frac{\pi}{2}+\theta} d \eta
\left(-(\cot\eta)^{2} + \frac{(\eta - \pi)\cos\eta}{(\sin\eta)^{3}}\right) + O(te^{-R^{2}/(4t)})\notag\\
&=\frac{t}{2\pi} + \frac{t}{2\pi} + \frac{t}{\pi}\left(\frac{\pi^{2}}{4}-1\right)+ O(te^{-R^{2}/(4t)})\notag\\
&=\frac{\pi}{4}t + O(te^{-R^{2}/(4t)}),\, t \downarrow 0.\label{e68}
\end{align}
Thus, by \eqref{e63}, \eqref{e64}, \eqref{e65} and \eqref{e68}, we have that, as $t \downarrow 0$,
\begin{equation}\label{e69}
M_{2}
=-\frac{R}{\sqrt{\pi}}t^{1/2} +(4 \pi t)^{-1/2}\int_{0}^{1}d\psi \, (\arcsin \psi - \psi)
\int_{R}^{\infty} dr \, r^{2}e^{- r^{2}\psi^{2}/(4t)}+O(te^{-R^{2}/(8t)}).
\end{equation}
Therefore, by \eqref{e61}, \eqref{e62} and \eqref{e69}, as $t \downarrow 0$,
\begin{align*}
&(4 \pi t)^{-1} \int_{0}^{\pi} d \theta_{1} \int_{0}^{\frac{3\pi}{2}} d \theta_{2} \int_{0}^{R} dr_{1}
\int_{0}^{\infty} dr_{2} (r_{1} r_{2}) e^{-(r_{1}\cos \theta_{1} - r_{2} \cos \theta_{2})^{2}/(4t)
-(r_{1}\sin \theta_{1} - r_{2} \sin \theta_{2})^{2}/(4t)}%\label{e69a}
\\
&=\frac{\pi R^{2}}{2} -\frac{R}{\sqrt{\pi}}t^{1/2} +(4 \pi t)^{-1/2}\int_{0}^{1}d\psi \, (\arcsin \psi - \psi)
\int_{R}^{\infty} dr \, r^{2}e^{- r^{2}\psi^{2}/(4t)}+O(te^{-R^{2}/(8t)}).\notag
\end{align*}
\end{proof}
We note that
\begin{align*}
\mathcal{V}_{\frac{3\pi}{2}}(t;R) &= \int_{B_{\frac{3\pi}{2}}(R)} dx \int_{W_{\frac{3\pi}{2}}} dy \, p(x,y;t)\notag\\
&=\int_{B_{\pi}(R)} dx \int_{W_{\frac{3\pi}{2}}} dy \, p(x,y;t) + \int_{B_{\frac{\pi}{2}}(R)} dx \int_{W_{\frac{3\pi}{2}}} dy \, p(x,y;t).%\label{e69b}
\end{align*}
By Theorem \ref{T1} and Lemma \ref{L6.1}, this implies that, as $t \downarrow 0$,
\begin{align}
&\int_{B_{\frac{\pi}{2}}(R)} dx \int_{W_{\frac{3\pi}{2}}} dy \, p(x,y;t)\notag\\
&= \frac{\pi R^{2}}{4} - \frac{R}{\sqrt{\pi}} t^{1/2}+g\left(\frac{3\pi}{2}\right)t \notag\\
&\ \ \ +(4 \pi t)^{-1/2}\int_{0}^{1}d\psi \, (\arcsin \psi - \psi)
\int_{R}^{\infty} dr \, r^{2}e^{- r^{2}\psi^{2}/(4t)}+O(te^{-R^{2}/(8t)})\notag\\
&= \frac{\pi R^{2}}{4} - \frac{R}{\sqrt{\pi}} t^{1/2}+\frac{t}{\pi}\label{e69c}\\
&\ \ \ +(4 \pi t)^{-1/2}\int_{0}^{1}d\psi \, (\arcsin \psi - \psi)
\int_{R}^{\infty} dr \, r^{2}e^{- r^{2}\psi^{2}/(4t)}+O(te^{-R^{2}/(8t)}).\notag
\end{align}
The result of Lemma~\ref{L6.1} and formula \eqref{e69c} will be used in Section \ref{S7} below.

\section{The heat content of the fractal polyhedron $D_{s}$.}\label{S7}
In this section, we use Theorem~\ref{T1} to compute the heat content of the fractal polyhedron $D_{s}$
which was constructed in Section~\ref{S1}. To do this, we adapt the scheme of \cite{vdBdH99} to the three-dimensional
setting below. The key step in \cite{vdBdH99} was to obtain a renewal equation by making a
suitable Ansatz for the heat content. The corresponding Ansatz has been made here in \eqref{e84} and \eqref{e89}
for $0<s<\sqrt{2}-1, s \neq \frac{1}{5}$, and in \eqref{e94} and \eqref{e98} for $s=\frac{1}{5}$.

In order to derive the required renewal equation, we need to compute the contribution to the heat content
$H_{D_{s}}(t)$ from $Q_{0}$ and $Q_{1,1}$. We do this below in Lemma~\ref{L7.1} and Lemma~\ref{L7.2} respectively.
In what follows, for $A \subset \R^{3}$, $d(x,A)$ is the 3-dimensional analogue of \eqref{e14a}. We make the
following approximations for $u_{D_{s}}(x;t)$.

Let $0 < \delta \leq  \min\left\{\frac{s^{2}}{2}, \frac{s(1-s)}{4}\right\}$ and let $x \in D_{s}$.
If $d(x,\partial D_{s}) \geq \delta$, then we have that
\begin{equation*}%\label{e69d}
\vert u_{D_{s}}(x;t) - 1 \vert \leq 2^{3/2}e^{-\delta^{2}/(8t)},
\end{equation*}
by the principle of not feeling the boundary, \cite[Proposition 9(i)]{mvdB13}. We define
\begin{equation*}%\label{e69e}
\tilde{F}=\{x \in D_{s} : d(x,\partial D_{s}) <\delta, d(x,e) > \delta \text{ for all edges } e\in \partial D_{s}\}.
\end{equation*}
If $x \in \tilde{F}$, then we have that
\begin{equation}\label{e69f}
\vert u_{D_{s}}(x;t) - u_{H}(x;t) \vert \leq 2^{3/2}e^{-\delta^{2}/(8t)},
\end{equation}
where
\begin{equation*}%\label{e69g}
u_{H}(x;t)=(4\pi t)^{-1/2}\int_{-d(x,\partial D_{s})}^{\infty} d\zeta \, e^{-\zeta^{2}/(4t)},
\end{equation*}
i.e. $H$ is a half-space whose boundary contains the face of $\partial D_{s}$ nearest to $x$.
Let
\begin{equation*}%\label{e69h}
\tilde{E}=\{x \in D_{s} : d(x,e) <\delta \text{ for some edge } e\in \partial D_{s}, d(x,v) >
\delta \text{ for all vertices } v\in \partial D_{s}\}.
\end{equation*}
If $x \in \tilde{E}$, then we have that
\begin{equation}\label{e69i}
\vert u_{D_{s}}(x;t) - u_{W}(x;t) \vert \leq 2^{3/2}e^{-\delta^{2}/(8t)},
\end{equation}
where $W$ is the infinite wedge $W_{\frac{\pi}{2}}$ for entrant edges and $W$ is the infinite wedge
$W_{\frac{3\pi}{2}}$ for re-entrant edges. (See the proof of Lemma~\ref{L7.1} for further details).
The estimates \eqref{e69f}, \eqref{e69i} follow by similar arguments to those given in the proof of Lemma~\ref{L}
with $\tilde{D}=D_{s}$, $F=H, W$, $E= \tilde{F}, \tilde{E}$ respectively and $G=\{x \in D_{s} : d(x,E)<\delta\}$.

It remains to approximate $u_{D_{s}}(x;t)$ for $x$ near a vertex of $\partial Q_{0} \cap \partial D_{s}$,
$\partial Q_{1,1} \cap \partial D_{s}$ respectively. We only require the contribution to the heat content
$H_{D_{s}}(t)$ from these vertices to derive the required renewal equation.
The relevant approximation to make here is via a one-sided infinite cone $C_{v}$ with vertex $v \in \partial D_{s}$
such that $\partial C_{v} \supseteq \{x \in \partial D_{s} : d(x,v) < \delta\}$.
Definition aside, no viable expressions are known for $u_{C_{v}}(x;t)$ in this 3-dimensional setting.
For our purposes, it is sufficient to approximate the neighbourhood of each vertex $v \in
(\partial Q_{0} \cup \partial Q_{1,1})\cap\partial D_{s}$ by a cube $S_{v}$. Each cube $S_{v}$ centred at $v$
has side-length $2\delta$ and is chosen such that the faces of $\partial S_{v}$ are pairwise-parallel to those of
$\partial Q_{0}$.
We are interested in the contribution to the heat content $H_{D_{s}}(t)$ from the region $S_{v} \cap D_{s}$.
There are two cases to consider. Either the vertex $v$ is entrant and $S_{v} \cap D_{s}$ is $\frac{1}{8}$ of $S_{v}$,
or the vertex $v$ is re-entrant and $S_{v} \cap D_{s}$ is $\frac{5}{8}$ of $S_{v}$. If $v$ is entrant, then the coefficient
of $t^{3/2}$ in the expansion for $H_{D_{s}}(t)$ is equal to $\frac{-1}{\pi^{3/2}}$ by separation of variables. Unfortunately,
we were unable to compute the coefficient of $t^{3/2}$ for a re-entrant vertex. However, the contribution to the heat content
$H_{D_{s}}(t)$ from each region $S_{v} \cap D_{s}$ is of order: $\delta^3$ from the volume, $\delta^2t^{1/2}$ from the
surface area of the adjoining faces of the vertex, and $\delta t$ from the length of the adjoining edges of the vertex.
Thus, if we choose $\delta$ as follows:
\begin{equation}\label{e70a}
\delta = 8t^{1/2}(\log(t^{-1}))^{1/2},
\end{equation}
then the contribution to the heat content $H_{D_{s}}(t)$ from each region $S_{v} \cap D_{s}$ is
\newline
$O(t^{3/2}(\log(t^{-1}))^{3/2})$. We note that this choice of $\delta$ gives
$O(e^{-\delta^{2}/(8t)})=O(t^{8})$.

\begin{lemma}\label{L7.1}
Let $0<s<\sqrt{2}-1$. Then
\begin{equation*}%\label{e70}
\int_{Q_{0}} dx \, u_{D_{s}}(x;t) = 1 -6(1-s^{2})\frac{t^{1/2}}{\sqrt{\pi}} + \frac{12}{\pi}t
+O(t^{3/2}(\log(t^{-1}))^{3/2},\, t \downarrow 0.
\end{equation*}
\end{lemma}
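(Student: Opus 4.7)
The plan is to decompose $Q_0$ according to its distance to the various strata of $\partial D_s$, apply the model-kernel approximations set up in the preamble on each piece, and collect the resulting contributions at each order of $t^{1/2}$.

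What $Q_0$ sees of $\partial D_s$ consists of six square faces, each with the central $s\times s$ attachment square removed (total area $6(1-s^2)$); twelve original edges of $Q_0$ of length $1$ with entrant dihedral angle $\pi/2$; twenty-four attachment-square edges of length $s$ with re-entrant dihedral angle $3\pi/2$; and $8+24$ vertices (entrant at the cube corners, re-entrant at the attachment-square corners). With $\delta=8t^{1/2}(\log(t^{-1}))^{1/2}$ as in \eqref{e70a}, I partition $Q_0$ into the deep interior $\{x\in Q_0 : d(x,\partial D_s)\geq\delta\}$, six face slabs of height $\delta$, one cylindrical edge tube of radius $\delta$ per edge (clipped $\delta$ away from its endpoints), and one vertex cube $S_v$ per vertex as in the preamble. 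The interior contributes its own volume plus $O(e^{-\delta^2/(8t)})=O(t^8)$ by the principle of not feeling the boundary, \cite[Proposition 9(i)]{mvdB13}. On the face slabs and edge tubes I apply the bounds \eqref{e69f}--\eqref{e69i} to replace $u_{D_s}$ by $u_H$ or $u_W$, picking up errors of the same exponentially small size. Each vertex cube contributes $O(t^{3/2}(\log(t^{-1}))^{3/2})$ uniformly by the preamble's order-of-magnitude analysis, which is insensitive to whether the vertex is entrant or re-entrant.

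Since the 3D heat kernel factorizes perpendicular to a face and along an edge, the face-slab integral of $u_H$ reduces to the one-dimensional computation of Section~\ref{SS4}, and each edge-tube integral of $u_W$ reduces to (length of edge)$\,\times\,$(2D cross-section integral), the $z$-integration of the 1D heat kernel over $\R$ contributing a factor of $1$. Summed over the six faces, the face slabs produce their combined volume plus the surface term $-6(1-s^2)\,t^{1/2}/\sqrt{\pi}$. On an entrant edge the 2D cross-section integral is $\mathcal{V}_{\pi/2}(t;R)$, whose $t$-coefficient is $g(\pi/2)=1/\pi$ by Lemma~\ref{L3.1}; integrated over the edge of length $1$ and summed over the $12$ entrant edges this gives $\frac{12}{\pi}\,t$. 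On a re-entrant attachment edge the cross-section of $Q_0$ is a $\pi$-sector inside the $3\pi/2$-wedge of $D_s$, governed by Lemma~\ref{L6.1}, whose $t$-coefficient vanishes; the $24$ re-entrant edges therefore contribute $0$ at order $t$. The volumes of all pieces sum exactly to $|Q_0|=1$, and after reconciling the overlaps at face/edge and edge/vertex interfaces the surface terms combine cleanly to $-6(1-s^2)\,t^{1/2}/\sqrt{\pi}$, with the leftover mismatch absorbed into the $O(t^{3/2}(\log(t^{-1}))^{3/2})$ error.

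The main obstacle is the bookkeeping: arranging the partition so that the volumes sum exactly to $1$, that the surface terms patch together cleanly across the edge tubes, and that all interface errors stay within $O(t^{3/2}(\log(t^{-1}))^{3/2})$. The essential geometric ingredient forcing the $t$-coefficient to equal $\frac{12}{\pi}$ rather than $\frac{12}{\pi}+\frac{24s}{\pi}$ is the vanishing of the $t$-coefficient in Lemma~\ref{L6.1}: the full $3\pi/2$-wedge contribution $g(3\pi/2)=1/\pi$ from Theorem~\ref{T1} splits into a $0$-coefficient piece on the $Q_0$ side and a $\frac{1}{\pi}$-coefficient piece on the $Q_{1,i}$ side, and only the former appears in Lemma~\ref{L7.1}.
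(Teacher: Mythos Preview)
Your proposal is correct and follows essentially the same route as the paper: partition $Q_0$ into interior, face, edge, and vertex regions at scale $\delta$ given by \eqref{e70a}; approximate $u_{D_s}$ by $1$, $u_H$, or $u_W$ on each piece via the not-feeling-the-boundary principle and Lemma~\ref{L}; and use Lemma~\ref{L3.1} on the twelve entrant edges together with the vanishing $t$-coefficient of Lemma~\ref{L6.1} on the twenty-four re-entrant attachment edges to obtain $\frac{12}{\pi}t$. The paper is slightly more explicit about the cusp/sector cancellation at the face--edge interfaces (Section~\ref{SS6}), but your ``reconciling the overlaps'' paragraph covers the same mechanism, and your closing remark on the split of $g(3\pi/2)$ between the $Q_0$ and $Q_{1,i}$ sides is exactly the point.
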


\begin{proof}
Partition $Q_{0}$ into the following sets.
\newline
(i) $\partial Q_{0} \cap \partial D_{s}$ has 32 vertices; $v_{i}, i=1,\dots,32$. At each vertex $v_{i}$,
consider a cube $S_{i}$ of side-length $2\delta$ centred at $v_{i}$. Let
$\tilde{S}_{i}=S_{i} \cap Q_{0}, i=1,\dots,32$ and $\tilde{S}=\cup_{i=1}^{32}\tilde{S}_{i}$.
\newline
(ii) $\partial Q_{0} \cap \partial D_{s}$ has 36 edges; $e_{j}, j=1,\dots,36$. Let $\tilde{E}_{j}=\{x \in Q_{0} :
d(x,e_{j})<\delta, x \not\in \tilde{S}\}$.
\newline
(iii) $\tilde{F}=\left\{x \in Q_{0} : d(x,\partial Q_{0} \cap \partial D_{s})<\delta, x \not\in \left(\tilde{S} \cup \bigcup_{j=1}^{36} \tilde{E}_{j}\right)\right\}$.
\newline
(iv) The interior of $Q_{0}$ minus (i), (ii) and (iii); an open polygon $P_{\delta}$ with distance at least $\delta$ to
$\partial D_{s}$.
\newline
(v) The remainder, which has measure zero.
\newline
The contribution to the heat content from (iv) is $\vert P_{\delta}\vert + O(e^{-\delta^{2}/(8t)}) = \vert P_{\delta}\vert
+ O(t^{3/2}),\, t \downarrow 0$.

To compute the contribution from (ii), there are two types of edges to consider. Each edge $e_{j}$ is the intersection
of two faces of $\partial D_{s}$. For fixed $j$, let $\Pi_{j}$ denote the plane which is orthogonal to these faces and
intersects $e_{j}$ in exactly one point. Apply Lemma~\ref{L} with $\tilde{D}=D_{s} \cap \Pi_{j}$, $F=W_{\frac{\pi}{2}}, W_{\frac{3\pi}{2}}$ respectively,
$E=\tilde{E}_{j} \cap \Pi_{j}$ and $G=\{x \in D_{s} \cap \Pi_{j} : d(x, \tilde{E}_{j} \cap \Pi_{j})<\delta\}$
so that either;
\newline
(I) the contribution from $\tilde{E}_{j} \cap \Pi_{j}$ can be approximated by that from a sector $B_{\delta}(\frac{\pi}{2})$ in a wedge
$W_{\frac{\pi}{2}}$, or
\newline
(II) the contribution from $\tilde{E}_{j} \cap \Pi_{j}$ can be approximated by that from a sector $B_{\delta}(\pi)$ in a wedge $W_{\frac{3\pi}{2}}$.

We can use Lemma~\ref{L3.1} to deduce that the contribution from the edges of type (I) is
\begin{align*}
&12(1-2\delta)\left(\vert \tilde{E}_{j} \cap \Pi_{j} \vert - 2\delta\frac{t^{1/2}}{\sqrt{\pi}}+\frac{1}{\pi}t\right)+O(t^{3/2})\notag\\
&=12\left(\vert \tilde{E}_{j} \vert -2\delta(1-2\delta)\frac{t^{1/2}}{\sqrt{\pi}} +\frac{1}{\pi}(1-2\delta)t\right)+O(t^{3/2}),\,
t \downarrow 0.%\label{e71}
\end{align*}
In addition, Lemma~\ref{L6.1} gives that the contribution from the edges of type (II) is
\begin{align*}
&24(s-2\delta)\left(\vert \tilde{E}_{j} \cap \Pi_{j} \vert - \delta\frac{t^{1/2}}{\sqrt{\pi}}\right)+O(t^{3/2})\notag\\
&=24\left(\vert \tilde{E}_{j} \vert - \delta(s-2\delta)\frac{t^{1/2}}{\sqrt{\pi}}\right)+O(t^{3/2}),\, t \downarrow 0.
%\label{e72}
\end{align*}

Each cross-section of $\tilde{F}$ is a union of rectangles and cuspidal regions. Thus by Section~\ref{S4},
we have that the contribution from $\tilde{F}$ equals
\begin{equation*}%\label{e73}
\vert \tilde{F} \vert - \mathcal{H}^2(\partial \tilde{F}) \frac{t^{1/2}}{\sqrt{\pi}} +O(t^{3/2}),\, t \downarrow 0.
\end{equation*}
For edges of type (I), the sector $\tilde{E}_{j}\cap\Pi_{j}$ has two cuspidal neighbours. For edges of type (II), the sector
$\tilde{E}_{j}\cap\Pi_{j}$ has one cuspidal neighbour but by Lemma~\ref{L6.1}, the term of order $t^{3/2}$ is half that
of Lemma~\ref{L}. Even though the sector and cusp terms of order $t^{3/2}$ cancel out, the remainder is dominated by the
contribution from (i), which is
$O(t^{3/2}(\log (t^{-1}))^{3/2}).$ This completes the proof of Lemma~\ref{L7.1}.
\end{proof}

Following the strategy of \cite{vdBdH99}, in order to compute $\int_{D_{s}- Q_{0}} dx \, u_{D_{s}}(x;t)$,
we introduce a model solution which approximates $u_{D_{s}}$ in one of the six components of $D_{s}- Q_{0}$.
Consider the half-space $H=\{(x_{1},x_{2},x_{3})\in\R^{3} : x_{1}<0\}$ and attach one of the six components
of $D_{s}- Q_{0}$ to $H$. The resulting set is
\begin{equation*}%\label{e74}
H_{s} = \text{interior} \left\{\overline{H \cup \left[\bigcup_{j\geq 1} \bigcup_{1\leq i \leq \frac{1}{6}N(j)}
Q_{j,i}\right]}\right\}.
\end{equation*}
Let $u_{H_{s}}$ be the unique solution of \eqref{e3} and \eqref{e4} with $D=H_{s}$. Define
\begin{equation*}%\label{e75}
E(t) = \int_{H_{s}- H} dx \, u_{H_{s}}(x;t).
\end{equation*}
Applying Lemma~\ref{L} with $\tilde{D}=D_{s}$, $F=H_{s}$, $E=H_{s}-H$ and $G=\{x \in H_{s} : d(x,E)<\epsilon\}$,
where $\epsilon=\frac{(1+s^{2})^{1/2}}{2(1-s)}(1-2s-s^{2})$, we have
\begin{align}
\int_{D_{s}-Q_{0}} dx \, u_{D_{s}}(x;t) &= 6\int_{H_{s}-H} dx \, u_{D_{s}}(x;t) \notag\\
&= 6\int_{H_{s}-H} dx \, u_{H_{s}}(x;t) +O(e^{-\epsilon^{2}/(8t)})\notag\\
&=6E(t) + O(e^{-\epsilon^{2}/(8t)}),\, t \downarrow 0. \label{e76}
\end{align}

In contrast to \cite{vdBdH99}, where the temperature of the boundary is fixed for all $t > 0$, we must
account for the fact that $H_{s}-H$ and $H_{s}-D_{s}$ feel each other's presence. The choice of $\epsilon$
above is a lower bound for the distance between $H_{s}-H$ and $H_{s}-D_{s}$.

Similarly to Lemma~\ref{L7.1}, we have;
\begin{lemma}\label{L7.2}
Let $0<s<\sqrt{2}-1$. Then
\begin{equation*}%\label{e77}
\int_{Q_{1,1}}dx \, u_{H_{s}}(x;t) = s^{3} - 5s^{2}(1-s^{2})\frac{t^{1/2}}{\sqrt{\pi}} + \frac{12s}{\pi}t +
\tilde{h}(t),\, t \downarrow 0,
\end{equation*}
where $\vert \tilde{h}(t) \vert \leq \tilde{C} t^{3/2}(\log(t^{-1}))^{3/2}$ for $t\leq s^{2}$ and some constant $\tilde{C}>0$.
\end{lemma}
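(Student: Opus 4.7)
The plan is to mirror the argument of Lemma \ref{L7.1}, this time for the cube $Q_{1,1}$ inside $H_{s}$. First I identify the local structure of $\partial H_{s}$ adjacent to $Q_{1,1}$: the face of $Q_{1,1}$ glued to $\partial H$ is in the interior of $H_{s}$, each of the other five faces carries a central $s^{2}\times s^{2}$ hole where the next-generation cube $Q_{2,i}$ attaches, all twelve edges of $Q_{1,1}$ lie on $\partial H_{s}$, and on each exposed face there are four further edges of length $s^{2}$ bounding the attachment holes. With $\delta$ chosen as in \eqref{e70a}, I would partition $Q_{1,1}$ into (i) eight vertex cubes $S_{v}\cap Q_{1,1}$ of side $2\delta$; (ii) $\delta$-tubes around each edge of $\partial Q_{1,1}\cap \partial H_{s}$ minus (i); (iii) $\delta$-slabs over each annular exposed face minus (i) and (ii); and (iv) the deep interior at distance at least $\delta$ from $\partial H_{s}$.

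On region (iv) Kac's principle gives $u_{H_{s}}=1+O(e^{-\delta^{2}/(8t)})$, contributing the volume modulo $O(t^{8})$. Each of the five annular faces in (iii) contributes to the $t^{1/2}$ term via Section \ref{SS4}, with total area $5(s^{2}-s^{4})=5s^{2}(1-s^{2})$. The edges in (ii) split into three types. (A) The eight edges of $Q_{1,1}$ not adjacent to the glued face are entrant edges of angle $\pi/2$ and length $s$; Lemma \ref{L3.1} gives $t$-coefficient $g(\pi/2)=1/\pi$ per unit length. (B) The four edges of length $s$ where $Q_{1,1}$ meets $\partial H$ are re-entrant ($3\pi/2$ in $H_{s}$), but on a transverse plane $Q_{1,1}$ occupies a $\pi/2$-sector of that $3\pi/2$-wedge, so formula \eqref{e69c} gives $t$-coefficient $1/\pi$ per unit length. (C) The twenty edges of length $s^{2}$ at the $Q_{2,i}$ attachments are also re-entrant $3\pi/2$, but here the transverse cross-section of $Q_{1,1}$ is a $\pi$-sector, so by Lemma \ref{L6.1} the $t$-coefficient vanishes. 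Summing the contributions from (iv), (iii) and (ii) yields $s^{3}-5s^{2}(1-s^{2})t^{1/2}/\sqrt{\pi}+(8s+4s+0)t/\pi$, which matches the stated expansion. Every replacement $u_{H_{s}}\leftrightarrow u_{W},u_{H},1$ is justified by Lemma \ref{L} with $\tilde{D}=H_{s}$ and the appropriate $F$ and $G$, each incurring an error $O(e^{-\delta^{2}/(8t)})=O(t^{8})$.

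The principal obstacle is the remainder from the eight vertex cubes $S_{v}\cap Q_{1,1}$, for which no closed-form three-dimensional model is available. As in the discussion preceding Lemma \ref{L7.1}, I would bound this contribution crudely by
\begin{equation*}
C\bigl(|S_{v}\cap Q_{1,1}|+\mathcal{H}^{2}(\text{adjoining faces})\,t^{1/2}+\text{adjoining edge length}\,t\bigr)=O(\delta^{3}+\delta^{2}t^{1/2}+\delta t),
\end{equation*}
which with the choice \eqref{e70a} is $O(t^{3/2}(\log t^{-1})^{3/2})$. The hypothesis $t\leq s^{2}$ then guarantees that $\delta$ lies below the geometric scales ($\sim s$, $s(1-s)/2$, etc.)\ needed for the partition to be well-defined and for the separation constants in Lemma \ref{L} to be bounded below; this forces $\tilde{C}$ to depend on $s$, which is acceptable since $\tilde{C}$ is applied only at the fixed generation $j=1$ in the renewal scheme of Section \ref{S7}.
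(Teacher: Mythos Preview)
Your proposal is correct and follows essentially the same approach as the paper's (brief) proof, which simply notes that the argument is analogous to Lemma~\ref{L7.1} with one additional edge type---your type (B), the four re-entrant edges where $Q_{1,1}$ meets $\partial H$---handled via \eqref{e69c}. Your identification of the three edge types (A), (B), (C), the use of Lemma~\ref{L3.1}, \eqref{e69c}, and Lemma~\ref{L6.1} respectively, and the resulting arithmetic $(8s+4s+0)/\pi=12s/\pi$ all match the paper's intent.

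One minor slip: you undercount the vertices. In addition to the eight corners of $Q_{1,1}$ there are twenty re-entrant vertices at the corners of the five attachment squares (four per exposed face), so $28$ vertex cubes are needed in part (i), just as Lemma~\ref{L7.1} records $32$ vertices for $Q_{0}$. Without these, the $\delta$-tubes of type (C) would overlap near the hole corners and the cross-sectional model would fail there. Since each vertex cube still contributes only $O(t^{3/2}(\log t^{-1})^{3/2})$, this correction does not alter the remainder estimate or the final expansion.
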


\begin{proof}
The proof of Lemma~\ref{L7.2} is analogous to that of Lemma~\ref{L7.1}.
We note that in this case there is an additional type of edge to consider.
Namely, the edges where the contribution from ($\tilde{E} \cap Q_{1,1})\cap\Pi_{j}$ can be approximated by
that from a sector $B_{\frac{\pi}{2}}(\delta)$ in a wedge $W_{\frac{3\pi}{2}}$. There are 4 such edges so the
contribution to the heat content is
\begin{align*}
&4(s-2\delta)\left(\vert (\tilde{E} \cap Q_{1,1}) \cap\Pi_{j} \vert - \delta\frac{t^{1/2}}{\sqrt{\pi}} + \frac{1}{\pi}t\right) +O(t^{3/2}) \notag\\
&=4\left(\vert \tilde{E} \cap Q_{1,1} \vert - \delta(s-2\delta)\frac{t^{1/2}}{\sqrt{\pi}} + \frac{(s-2\delta)}{\pi}t\right) +O(t^{3/2}),\,
t \downarrow 0, %\label{e78}
\end{align*}
by \eqref{e69c}. This completes the proof of Lemma \ref{L7.2} by our choice of $\delta$, \eqref{e70a}.
\end{proof}

Below we state and prove the corresponding 3-dimensional result to \cite[Proposition 4]{vdBdH99}
for completeness.

\begin{lemma}\label{L7.3}
Fix $0<s<\sqrt{2}-1$. Then
\begin{equation}\label{e79}
E(t)=5s^{3}E\left(\frac{t}{s^{2}}\right)+s^{3}-5s^{2}(1-s^{2})\frac{t^{1/2}}{\sqrt{\pi}} +\frac{12s}{\pi}t+h(t),
\end{equation}
where $\vert h(t) \vert \leq \tilde{C} t^{3/2}(\log(t^{-1}))^{3/2}$ for $t\leq s^{2}$ and some constant $\tilde{C}>0$.
\end{lemma}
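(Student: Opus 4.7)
The plan is to exploit the self-similar structure of $H_s$ to derive the renewal equation. Decompose the region $H_s - H$ into the first-generation cube $Q_{1,1}$ together with the five branches $\Omega_1,\ldots,\Omega_5$ attached to the five non-$H$-facing faces $f_1,\ldots,f_5$ of $Q_{1,1}$, so that $H_s - H = Q_{1,1} \cup \bigcup_{i=1}^{5}\Omega_i$ up to a set of measure zero. Hence
\begin{equation*}
E(t)=\int_{Q_{1,1}} u_{H_s}(x;t)\,dx + \sum_{i=1}^{5}\int_{\Omega_i} u_{H_s}(x;t)\,dx.
\end{equation*}
The first integral is handled by Lemma~\ref{L7.2}, producing the explicit terms $s^{3} - 5s^{2}(1-s^{2})\frac{t^{1/2}}{\sqrt{\pi}} + \frac{12s}{\pi}t + \tilde{h}(t)$ with $|\tilde{h}(t)| \leq \tilde{C}t^{3/2}(\log(t^{-1}))^{3/2}$ for $t\leq s^{2}$, and this accounts for all the non-renewal terms on the right-hand side of \eqref{e79}.

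For each $i$, the branch $\Omega_i$ is geometrically congruent to the factor-$s$ rescaling of $H_s - H$. Let $\tilde{H}_s^{(i)}$ denote the rescaled copy of $H_s$ by the factor $s$, positioned so that its half-space part $\tilde{H}^{(i)}$ lies on the side of the face $f_i$ opposite to $\Omega_i$, and so that the rescaled cube portion $\tilde{H}_s^{(i)} - \tilde{H}^{(i)}$ coincides with $\Omega_i$. Applying Lemma~\ref{L} with $\tilde{D}=H_s$, $F=\tilde{H}_s^{(i)}$, $E=\Omega_i$, and $G$ a thickening of $\Omega_i$ inside $H_s \cap \tilde{H}_s^{(i)}$, gives
\begin{equation*}
\int_{\Omega_i} u_{H_s}(x;t)\,dx = \int_{\Omega_i} u_{\tilde{H}_s^{(i)}}(x;t)\,dx + O(e^{-\epsilon^{2}/(8t)}),
\end{equation*}
where $\epsilon>0$ is the distance from $\Omega_i$ to the symmetric difference $H_s\triangle\tilde{H}_s^{(i)}$. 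This $\epsilon$ is bounded below by a positive constant depending only on $s$, of the same order as the constant already used in \eqref{e76}, and the constraint $0<s<\sqrt{2}-1$ is precisely what guarantees non-overlap of the branches with their scaled counterparts and hence positivity of $\epsilon$. The heat-equation scaling identity $u_{s\cdot H_s + v}(sy+v;s^{2}\tau) = u_{H_s}(y;\tau)$, applied via the substitution that maps $\Omega_i$ onto $H_s-H$, then yields
\begin{equation*}
\int_{\Omega_i} u_{\tilde{H}_s^{(i)}}(x;t)\,dx = s^{3}\, E\!\left(\frac{t}{s^{2}}\right),
\end{equation*}
with the factor $s^{3}$ coming from the Jacobian in $\R^{3}$.

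Summing the five identical branch contributions and combining with the $Q_{1,1}$ contribution produces the renewal equation \eqref{e79}, with remainder $h(t)=\tilde{h}(t)+O(e^{-\epsilon^{2}/(8t)})$ which for $t \leq s^{2}$ is absorbed into the bound $\tilde{C}t^{3/2}(\log(t^{-1}))^{3/2}$, since the exponential term decays faster than any polynomial. The main obstacle is the precise geometric identification of $\tilde{H}_s^{(i)}$ with $H_s$ near $\Omega_i$: one must verify that the auxiliary domain agrees with $H_s$ in a neighbourhood of $\Omega_i$ wide enough that $\epsilon$ admits a positive lower bound uniform in $t$, and this is exactly where the non-overlap condition $0<s<\sqrt{2}-1$ plays its essential role.
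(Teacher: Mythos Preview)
Your proposal is correct and follows essentially the same route as the paper: decompose $H_s-H$ into $Q_{1,1}$ and five scaled branches, invoke Lemma~\ref{L7.2} for the cube, apply Lemma~\ref{L} to replace $u_{H_s}$ by the solution on a scaled copy of $H_s$ on each branch, and use the scaling identity to obtain $s^3 E(t/s^2)$ per branch. The only differences are notational and that the paper supplies an explicit value $\tilde{\epsilon}=\frac{s(1-2s-s^2)}{2(1-s)}$ for the separation constant where you appeal qualitatively to the non-overlap condition.
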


\begin{proof}
Re-write $E(t)$ as
\begin{equation*}%\label{e79a}
E(t)=\int_{Q_{1,1}} dx \, u_{H_{s}}(x;t) + \int_{H_{s}-(H \cup Q_{1,1})} dx \, u_{H_{s}}(x;t).
\end{equation*}
Then $H_{s}-(H \cup Q_{1,1})$ consists of 5 copies of $H_{s} - H$ scaled by a factor $s$, say $A_{1}, \dots, A_{5}$.
Each of these copies has a face $f_{i}$ connecting it to $Q_{1,1}$. Let $H_{A_{i}}$ be the half-space such that
$\partial H_{A_{i}} \supset f_{i}$ and $H_{A_{i}} \supset Q_{1,1}$. Put $F_{i} = A_{i} \cup H_{A_{i}} \cup f_{i}$.
Then $F_{i}$ is a copy of $H_{s}$ scaled by a factor $s$. Thus, by scaling, we have
\begin{equation*}%\label{e80}
\int_{A_{i}}dx \, u_{F_{i}}(x;t) = s^{3}\int_{H_{s}-H}dx \, u_{H_{s}}(x;t/s^{2}) = s^{3}E(t/s^{2}).
\end{equation*}
Define $G_{i}=\{x \in H_{s} : d(x,A_{i})<\tilde{\epsilon}\}$, where $\tilde{\epsilon}=\frac{s(1-2s-s^{2})}{2(1-s)}$.
Applying Lemma~\ref{L} with $\tilde{D}=H_{s}$, $F=F_{i}$, $E=A_{i}$ and $G=G_{i}$, we have
\begin{equation*}%\label{e81}
\int_{A_{i}}dx \, u_{H_{s}}(x;t)=\int_{A_{i}}dx \, u_{F_{i}}(x;t) + O(e^{-\tilde{\epsilon}^{2}/(8t)}),\, t \downarrow 0.
\end{equation*}
Hence
\begin{align*}
\int_{H_{s}-(H \cup Q_{1,1})} dx \, u_{H_{s}}(x;t) &= 5\int_{A_{i}} dx \, u_{H_{s}}(x;t)\notag\\
&=5\int_{A_{i}}dx \, u_{F_{i}}(x;t) + O(e^{-\tilde{\epsilon}^{2}/(8t)})\notag\\
&=5s^{3}E(t/s^{2}) + O(e^{-\tilde{\epsilon}^{2}/(8t)}),\, t \downarrow 0.%\label{e82}
\end{align*}
Combining this with Lemma~\ref{L7.2} gives the result.
\end{proof}

We must now consider the different regimes for $s$.

\begin{lemma}\label{L7.4}
Let $d=\frac{3}{2} + \frac{1}{2}\frac{\log 5}{\log s}$ and fix $0<s<\sqrt{2}-1, s \neq \frac{1}{5}$.
Then there exists a periodic, continuous function $p_{s}:\R \rightarrow \R$ with period $\log(s^{-2})$ such that
as $t \downarrow 0$,
\begin{equation}\label{e83}
E(t)= \frac{s^{3}}{1-5s^{3}} - \frac{5s^{2}(1-s^{2})}{1-5s^{2}}\frac{t^{1/2}}{\sqrt{\pi}}+\frac{12s}{\pi(1-5s)}t
+p_{s}(\log t)t^{d} + O(t^{3/2}(\log (t^{-1}))^{3/2}).
\end{equation}
\end{lemma}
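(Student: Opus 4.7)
The plan is to convert the renewal equation of Lemma~\ref{L7.3} into the stated asymptotic expansion in three steps: match polynomial powers of $t^{1/2}$ via an ansatz, iterate to a fundamental interval, and extract the self-similar $t^{d}$ contribution. First I would write $E(t) = A + B t^{1/2} + C t + F(t)$ and substitute into \eqref{e79}. Matching the constant, $t^{1/2}$, and $t$ coefficients produces the linear equations
$$A(1-5s^3) = s^3, \qquad B(1-5s^2) = -\frac{5s^2(1-s^2)}{\sqrt{\pi}}, \qquad C(1-5s) = \frac{12s}{\pi}.$$
The first two are solvable because $s < \sqrt{2}-1$ forces $5s^3, 5s^2 < 1$, and the hypothesis $s \neq 1/5$ is precisely what makes the third equation solvable. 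With these choices of $A$, $B$, $C$, the remainder $F$ satisfies the homogeneous renewal equation $F(t) = 5s^3 F(t/s^2) + h(t)$.

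The key observation for the iteration is that the definition of $d$ is tailored so that $s^{2d} = 5s^3$. For each small $t > 0$, let $N = N(t)$ be the unique nonnegative integer with $u := t/s^{2N} \in (s^{2},1]$; iterating the homogeneous renewal equation $N$ times yields
$$F(t) = (5s^3)^{N} F(u) + \sum_{k=0}^{N-1}(5s^3)^{k} h(t/s^{2k}).$$
Using $(5s^3)^{N} = s^{2Nd} = (t/u)^{d}$ and re-indexing $j = N - k$, this rearranges to
$$F(t) = t^{d} u^{-d}\left[F(u) + \sum_{j=1}^{N}(5s^3)^{-j} h(us^{2j})\right].$$
The identity $(5s^3)^{-j}(us^{2j})^{3/2} = u^{3/2} 5^{-j}$ combined with $|h(\tau)| \leq \tilde{C} \tau^{3/2}(\log \tau^{-1})^{3/2}$ shows that the full series
$$Q(u) := F(u) + \sum_{j=1}^{\infty}(5s^3)^{-j} h(us^{2j})$$
converges uniformly to a continuous function on $(s^{2}, 1]$. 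The tail $\sum_{j=N+1}^{\infty}$ contributes $O(5^{-N}(\log t^{-1})^{3/2})$, and $t^{d}\cdot 5^{-N} = u^{d} s^{3N} = u^{d-3/2} t^{3/2} = O(t^{3/2})$, so the tail contribution to $F(t)$ is $O(t^{3/2}(\log t^{-1})^{3/2})$. Setting $p_{s}(\log t) := u(t)^{-d} Q(u(t))$ produces the required function, and its periodicity with period $\log(s^{-2})$ is automatic because $u(t)$ depends on $\log t$ only modulo $\log(s^{-2})$.

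The main obstacle is continuity of $p_{s}$, which reduces to the matching condition $s^{-2d} Q(s^{2}) = Q(1)$, equivalently $Q(s^{2}) = 5s^3 Q(1)$, at each seam $t = s^{2N}$ where $u$ jumps from $s^{2}$ (approached from the right) to $1$ (approached from the left). This identity is not automatic: shifting the summation index in $Q(s^{2}) = F(s^{2}) + \sum_{j \geq 1}(5s^3)^{-j} h(s^{2j+2})$ produces $F(s^{2}) + 5s^3[Q(1) - F(1)] - h(s^{2})$, and the remaining expression $F(s^{2}) - 5s^3 F(1) - h(s^{2})$ vanishes by the renewal equation evaluated at the single point $t = s^{2}$. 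Combining this continuity with the periodicity and the tail estimate yields the claimed expansion for $E(t)$.
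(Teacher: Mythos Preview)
Your argument is correct and follows essentially the same route as the paper: subtract the ansatz $A+Bt^{1/2}+Ct$ to obtain the homogeneous renewal equation for the remainder (the paper's $q_s(t)=F(t)t^{-d}$ satisfies exactly $q_s(t)=q_s(t/s^2)+h(t)t^{-d}$), then build $p_s$ from the summable series $\sum_j h(ts^{2j})(ts^{2j})^{-d}$ using $s^{2d}=5s^3$. The only substantive difference is presentational: the paper defines $p_s(\log t)=q_s(t)+\sum_{j\ge 1}h(ts^{2j})(ts^{2j})^{-d}$ for all $t$ and verifies periodicity directly (deferring continuity to \cite{vdBdH99} via continuity of $t\mapsto H_{D_s}(t)$), whereas you restrict to a fundamental domain $(s^2,1]$ and check the seam condition $Q(s^2)=5s^3Q(1)$ explicitly---your verification of that identity via the renewal equation at $t=s^2$ is exactly what makes the paper's periodicity computation work, so the two arguments are equivalent.
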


\begin{proof}
Define
\begin{equation}\label{e84}
q_{s}(t)t^{d} = E(t) -\frac{s^{3}}{1-5s^{3}} + \frac{5s^{2}(1-s^{2})}{1-5s^{2}}\frac{t^{1/2}}{\sqrt{\pi}}
- \frac{12s}{\pi(1-5s)}t.
\end{equation}
Substitute \eqref{e79} into \eqref{e84} to obtain
\begin{equation*}%\label{e85}
q_{s}(t)t^{d}=5s^{3}E\left(\frac{t}{s^{2}}\right) - 5s^{3}\left(\frac{s^{3}}{1-5s^{3}}\right)
+25s^{4}\left(\frac{1-s^{2}}{1-5s^{2}}\right)\frac{t^{1/2}}{\sqrt{\pi}} - 5s\left(\frac{12s}{\pi(1-5s)}\right)t
+h(t).
\end{equation*}
From \eqref{e84}, we also have
\begin{equation*}%\label{e86}
q_{s}\left(\frac{t}{s^{2}}\right)\frac{t^{d}}{5s^{3}} = E\left(\frac{t}{s^{2}}\right) - \frac{s^{3}}{1-5s^{3}}
+\frac{5s(1-s^{2})}{1-5s^{2}}\frac{t^{1/2}}{\sqrt{\pi}} - \frac{12}{\pi s (1-5s)}t,
\end{equation*}
which implies
\begin{equation*}%\label{e87}
q_{s}\left(\frac{t}{s^{2}}\right)t^{d}=q_{s}(t)t^{d} - h(t),
\end{equation*}
or equivalently,
\begin{equation*}%\label{e88}
q_{s}(t)=q_{s}\left(\frac{t}{s^{2}}\right) + h(t)t^{-d}.
\end{equation*}
Similarly to \cite[Proposition 5]{vdBdH99}, define $p_{s}(\log t)$ by
\begin{equation}\label{e89}
q_{s}(t)=p_{s}(\log t) - \sum_{j=1}^{\infty} h(ts^{2j})(ts^{2j})^{-d}.
\end{equation}
Then
\begin{equation*}%\label{e90}
q_{s}\left(\frac{t}{s^{2}}\right) = p_{s}\left(\log \frac{t}{s^{2}}\right) - h(t)t^{-d}
- \sum_{j=1}^{\infty} h(ts^{2j})(ts^{2j})^{-d},
\end{equation*}
hence
\begin{equation*}%\label{e91}
p_{s}\left(\log \frac{t}{s^{2}}\right) = q_{s}(t) + \sum_{j=1}^{\infty} h(ts^{2j})(ts^{2j})^{-d}
=p_{s}(\log t).
\end{equation*}
Since there is a constant $\tilde{C}>0$ such that $\vert h(t) \vert \leq \tilde{C} t^{3/2}(\log(t^{-1}))^{3/2}$
for $t\leq s^{2}$, there is a constant $\hat{C}>0$ such that
\begin{align}
\bigg\vert \sum_{j=1}^{\infty} h(ts^{2j})(ts^{2j})^{-d} \bigg\vert
&\leq \hat{C}t^{(3/2)-d}\left((\log(t^{-1}))^{3/2}\sum_{j=1}^{\infty}(s^{3-2d})^{j} + (-\log s)^{3/2}\sum_{j=1}^{\infty}(2j)^{3/2}(s^{3-2d})^{j}\right)\notag\\
&=O(t^{(3/2)-d}(\log(t^{-1}))^{3/2}).\label{e92}
\end{align}
Combining \eqref{e84} with \eqref{e89} and \eqref{e92} gives \eqref{e83}.
\end{proof}

\begin{lemma}\label{L7.5}
Let $s=\frac{1}{5}$. Then there exists a periodic, continuous function
$p_{\frac{1}{5}}:\R \rightarrow \R$ with period $\log 25$ such that as $t \downarrow 0$,
\begin{equation*}%\label{e93}
E(t) = \frac{1}{120} - \frac{6}{25}\frac{t^{1/2}}{\sqrt{\pi}} - \frac{6}{5\pi\log 5} t \log t + \frac{12}{5\pi}t
+p_{\frac{1}{5}}(\log t)t + O(t^{3/2}(\log(t^{-1}))^{3/2}).
\end{equation*}
\end{lemma}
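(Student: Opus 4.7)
The strategy parallels Lemma~\ref{L7.4}, with the critical adjustment that at $s=\tfrac{1}{5}$ the exponent $d = \tfrac{3}{2} + \tfrac{1}{2}\log 5/\log(\tfrac{1}{5})$ equals $1$. The fractal exponent $t^d$ then coincides with the ``elementary'' $t$ term, and correspondingly the coefficient $\tfrac{12s}{\pi(1-5s)}$ of Lemma~\ref{L7.4} develops a pole at $s=\tfrac{1}{5}$. This resonance is resolved, in the standard way for such functional equations, by including an extra $t\log t$ term in the Ansatz. I would first specialise Lemma~\ref{L7.3} to $s=\tfrac{1}{5}$ to obtain the renewal equation
\begin{equation*}
E(t) = \tfrac{1}{25}\,E(25t) + \tfrac{1}{125} - \tfrac{24}{125}\tfrac{t^{1/2}}{\sqrt{\pi}} + \tfrac{12}{5\pi}\,t + h(t),
\end{equation*}
and propose the modified Ansatz
\begin{equation*}
q(t)\,t \,:=\, E(t) - \tfrac{1}{120} + \tfrac{6}{25}\tfrac{t^{1/2}}{\sqrt{\pi}} + \tfrac{6}{5\pi\log 5}\,t\log t - \tfrac{12}{5\pi}\,t.
\end{equation*}

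Substituting the renewal equation into this definition and matching coefficients of the constant, $t^{1/2}$, $t$, and $t\log t$ terms pins down the three numerical constants. The constants $\tfrac{1}{120}$ and $\tfrac{6}{25}$ arise from non-resonant fixed-point relations, exactly analogous to the terms $\tfrac{s^3}{1-5s^3}$ and $\tfrac{5s^2(1-s^2)}{1-5s^2}$ of Lemma~\ref{L7.4} specialised at $s=\tfrac{1}{5}$. The coefficient of $t\log t$ is forced by the resonance: seeking a particular solution of the form $ct\log t$ for the $t$-forcing gives $c\log 25 = -\tfrac{12}{5\pi}$, whence $c = -\tfrac{6}{5\pi\log 5}$. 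After these cancellations one is left with the clean functional equation
\begin{equation*}
q(t) = q(25t) + \tfrac{h(t)}{t}.
\end{equation*}

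Exactly as in \eqref{e89}, I would then define
\begin{equation*}
p_{\frac{1}{5}}(\log t) := q(t) + \tfrac{1}{t}\sum_{j=1}^{\infty} 25^{j}\, h(t/25^{j}),
\end{equation*}
and verify by a telescoping calculation using the functional equation that it is invariant under $\log t \mapsto \log t - \log 25$, hence $\log 25$-periodic. Continuity follows from uniform convergence of the series on compact $\log t$-intervals. The bound $|h(\tau)| \le \tilde{C}\,\tau^{3/2}(\log\tau^{-1})^{3/2}$ from Lemma~\ref{L7.3} gives
\begin{equation*}
\tfrac{1}{t}\,25^{j}\, |h(t/25^{j})| \le \tilde{C}\,5^{-j}\,t^{1/2}\bigl(j\log 25 + \log t^{-1}\bigr)^{3/2},
\end{equation*}
and the geometric factor $5^{-j}$ dominates the polynomial growth in $j$, giving a tail of order $t^{1/2}(\log t^{-1})^{3/2}$. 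Multiplying by the factor $t$ appearing in the Ansatz produces the claimed remainder $O(t^{3/2}(\log t^{-1})^{3/2})$.

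The main obstacle is recognising the resonance and adjusting the Ansatz accordingly: the linear inhomogeneous relation arising from the $t$-coefficients in the renewal equation is degenerate in the leading unknown $b$ (the $b$-direction is a homogeneous mode of period $\log 25$), so one must promote $bt$ to $bt+ct\log t$ and absorb the residual $bt$ into the periodic function $p_{\frac{1}{5}}$. Once this is in place, the computations run in complete analogy with Lemma~\ref{L7.4}.
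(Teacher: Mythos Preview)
Your proposal is correct and follows essentially the same route as the paper: the same Ansatz \eqref{e94}, the same reduction to $q(t)=q(25t)+h(t)/t$, and the same definition of $p_{\frac15}$ as $q$ plus the tail series (your formula coincides with \eqref{e98} after rewriting $(t\cdot 25^{-j})^{-1}=25^j/t$). The only minor differences are expository: you explain the resonance mechanism and the origin of the $t\log t$ term, whereas the paper simply writes down the Ansatz; and you argue continuity of $p_{\frac15}$ via uniform convergence of the series, while the paper defers it to continuity of $t\mapsto H_{D_s}(t)$ at the end of Section~\ref{S7}.
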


\begin{proof}
For $s=\frac{1}{5}$, $d=1$. Define
\begin{equation}\label{e94}
q_{\frac{1}{5}}(t)t = E(t) -\frac{1}{120} + \frac{6}{25}\frac{t^{1/2}}{\sqrt{\pi}} + \frac{6}{5\pi\log 5} t \log t - \frac{12}{5\pi}t.
\end{equation}
Substitute \eqref{e79} with $s=\frac{1}{5}$ into \eqref{e94} to obtain
\begin{equation*}%\label{e95}
q_{\frac{1}{5}}(t)t = \frac{1}{25}E(25t)- \frac{1}{3000} + \frac{6}{125}\frac{t^{1/2}}{\sqrt{\pi}} +
\frac{6}{5\pi\log 5}t \log t + h(t).
\end{equation*}
By considering \eqref{e94} with $t$ replaced by $25t$, we obtain that
\begin{equation*}%\label{e96}
q_{\frac{1}{5}}(25t)t = q_{\frac{1}{5}}(t)t - h(t),
\end{equation*}
or equivalently,
\begin{equation}\label{e97}
q_{\frac{1}{5}}(t) = q_{\frac{1}{5}}(25t) + h(t)t^{-1}.
\end{equation}
Define $p_{\frac{1}{5}}(\log t)$ by
\begin{equation}\label{e98}
q_{\frac{1}{5}}(t) = p_{\frac{1}{5}}(\log t) - \sum_{j=1}^{\infty} h(t(25)^{-j})(t(25)^{-j})^{-1}.
\end{equation}
Then, as in the proof of Lemma~\ref{L7.4}, we have $p_{\frac{1}{5}}(\log t) = p_{\frac{1}{5}}(\log 25t)$
via \eqref{e97}. Since \eqref{e92} holds for $s=\frac{1}{5}$, we obtain the same remainder estimate as
in Lemma~\ref{L7.4}.
\end{proof}

Combining Lemma~\ref{L7.1}, \eqref{e76} and Lemma~\ref{L7.4}, Lemma~\ref{L7.5} we obtain \eqref{e13f}, \eqref{e13g}
respectively. Using the fact that $t\mapsto H_{D_{s}}(t)$ is continuous, it can be shown that $t \mapsto p_{s}(\log t)$
is continuous for $0<s<\sqrt{2}-1$, see \cite[Section 5.1]{vdBdH99}.

\end{document}